\def\N{\mathbb N}
\def\Z{\mathbb Z}
\def\C{\mathbb C}
\def\O{{\mathcal{O}}}
\def\fg{\mathfrak{g}}
\def\fn{\mathfrak{n}}
\def\fh{\mathfrak{h}}
\def\lam{\lambda}
\def\Lam{\Lambda}
\def\H{\rm H}
\def\mL{\mathbb L}
\newcommand\HH{\mathscr{H}}
\def\mL{\mathbb{L}}
\def\mD{\mathbb{\Delta}}
\def\mP{\mathbb{P}}
\newcommand{\frp}{\mathfrak{p}}
\newcommand{\frg}{\mathfrak{g}}
\newcommand{\fru}{\mathfrak{u}}
\newcommand{\frl}{\mathfrak{l}}
\DeclareMathOperator\lmod{\!-mod}
\DeclareMathOperator\rmod{mod-\!}
\DeclareMathOperator\lgmod{\!-gmod}
\DeclareMathOperator\rgmod{gmod-\!}
\DeclareMathOperator\Hom{Hom}
\DeclareMathOperator\ho{hom}
\DeclareMathOperator\End{End}
\DeclareMathOperator\Ext{Ext}
\DeclareMathOperator\id{id}
\DeclareMathOperator\res{res}
\DeclareMathOperator\pr{pr}
\DeclareMathOperator\ind{ind}
\DeclareMathOperator\tr{tr}
\numberwithin{equation}{section}
\newtheorem{theorem}{Theorem}[section]
\newtheorem{lemma}[theorem]{Lemma}
\newtheorem{corollary}[theorem]{Corollary}
\newtheorem{proposition}[theorem]{Proposition}
\theoremstyle{definition}
\newtheorem{definition}[theorem]{Definition}
\newtheorem{dfnthm}[theorem]{Definition and Theorem}
\theoremstyle{remark}
\newtheorem{remark}[theorem]{Remark}
\def\title#1#2{\thispagestyle{empty}\null\vbox to1cm{}\par
\noindent{\sffamily\bfseries\LARGE#1}\\[1cm]{\sffamily\bfseries\LARGE#2}\par
\vbox to1cm{}}
\def\author#1{\renewcommand{\thefootnote}{\fnsymbol{footnote}}\noindent{\normalsize\textit{By #1}}\par\vbox
to1cm{}}
\def\thanks#1{\global\footnote{#1}}
\def\address#1#2{\global\footnotetext[#1]{\noindent #2}}
\def\maketitle{\renewcommand{\thefootnote}{\arabic{footnote}}\setcounter{footnote}{0}}
\def\runningtitle#1{\markboth{\small\it Algebraic Groups, Algebraic Geometry and
Representation Theory}{\small\it #1}}
\def\Ext{\operatorname{Ext}}         %
\def\Hom{\operatorname{Hom}}         %
\let\hom\Hom                         %
\begin{document}

\title{1}{BGG category $\O$ and $\Z$-graded representation theory}

\runningtitle{Representations of Quivers}

\author{Jun Hu\thanks{Supported by the National Natural
Science Foundation of China (No. 12171029).}}

\address1{Key Laboratory of Mathematical Theory and Computation in Information Security, School of Mathematics and Statistics,  %
Beijing Institute of Technology,                        
Beijing, 100081. P.R. China.}          
\date{}

\maketitle 


\noindent For a finite dimensional complex semisimple Lie algebra $\fg$, we fix a triangular decomposition $\fg=\fn\oplus\fh\oplus\fn^{-}$ and consider its
Bernstein-Gelfand-Gelfand category $\O$ (BGG category $\O$ for short) from \cite{BGG} and its parabolic generalisation by Rocha-Caridi in \cite{R}. These categories play an important role in the study of certain infinite representations of semisimple Lie groups. The characters of simple modules in these categories can be computed using the famous Kazhdan-Lusztig Conjecture (nowadays a theorem). By the celebrated work of Soergel \cite{So}, Beilinson-Ginzburg-Soergel \cite{BGS} and Backelin \cite{Bac}, each block ${\O}_{\lam}$ of $\O$ is equivalent to the category of finite dimensional modules over a finite dimensional Koszul $\C$-algebra $A_\lam$.
The Koszul structure enable people to study the $\Z$-graded representation theory of $A_\lam$. The category $A_\lam\lgmod$ of finite dimensional $\Z$-graded $A_\lambda$-modules is often referred as ${\O}_{\lam}^{\Z}$, the $\Z$-graded version of the BGG category $\O_\lam$, which fits nicely in the framework of Kazhdan-Lusztig theory.

In this chapter we give a brief introduction to the category ${\O}_{\lam}^{\Z}$ as well as its parabolic generalisation and survey some recent development. There are massive references and survey articles on BGG category $\O$, while we will only focus on the $\Z$-graded version ${\O}_{\lam}^{\Z}$ of $\O$, with some emphasis on Soergel's combinatorial $\mathbb{V}$ functor, $\Z$-graded duality as well as $\Z$-graded translation functors. These objects play important role in the $\Z$-graded representation theory of ${\O}_{\lam}^{\Z}$.

The content is organised as follows. In Section one we first recall some preliminary definition and results for the usual BGG category $\O$. Then we give the definition of coinvariant algebra as well as Soergel's combinatorial $\mathbb{V}$ functor. This functor plays a crucial role in the $\Z$-graded representation theory of $A_\lam$. We show that (Corollaries \ref{indec}, \ref{bs}) the image $\mathbb{V}(P(x\cdot\lam))$ of indecomposable projective module $P(x\cdot\lam)$ under the $\mathbb{V}$ functor is an indecomposable gradable $C_\lam$-module. After introducing the parabolic category $\O$, we give the well-know Koszul duality result in Theorem \ref{koszul} and introduce some basic objects in the $\Z$-graded representation theory of $A_\lam$. This section ends with a quick introduction to the theory of projective functors as well as a categorification result for Hecke algebra using indecomposable projective functors. In Section two we first recall the well-known Kazhdan-Lusztig conjecture, which can be seen as a milestone in the development of the theory of BGG category $\O$. Then we introduce some fundamental results about the $\Z$-graded decomposition number for the BGG category $\O$ as well as our recent work on the $\Z$-graded decomposition number and $\Z$-graded inverse decomposition number for the parabolic and general cases. Section three is devoted to the study of the coinvariant algebra $C$ as well as its parabolic invariants $C^J$ under a parabolic subgroup $W_J$ of $W$. We discuss some basic property and study these algebras from a purely algebraic point of view. We show that both $C$ and $C^J$ are graded cellular algebras and equipped with a homogeneous symmetrizing form (Corollaries \ref{tr0}, \ref{cellular2}, Theorem \ref{tr}). We show (Lemma \ref{selfdualC}) that $\mathbb{V}(P(x\cdot\lam))\<-\ell(w_0)+\ell(w_{0,\lam})\>$ is graded self-dual as a graded $C_\lam$-module. As an application, we obtain (Corollary \ref{idem0}, Definition \ref{duality0}) a degree $0$ anti-involution in $A_\lam$ and a $\Z$-graded duality functor in ${\O}_{\lam}^{\Z}$ (which fixes an error in \cite[\S6.1.2]{Str}). The pair of adjoint graded functors given in Proposition \ref{keyprop1} will play a key role in the study of $\Z$-graded translation functors. Section four is devoted to the study of $\Z$-graded lift of translation functors. These $\Z$-graded lifts were constructed in \cite{Str} in the semiregular case and we construct and study them in Lemma \ref{lift1} and Theorem \ref{adj2} in the general case with some new and streamlined argument. We study the action of these graded functors on graded Verma modules, graded simple modules and graded projective modules in Theorem \ref{4results}.

\bigskip
\centerline{Acknowledgements}
\bigskip

The author would like to thank Prof. Wolfgang Soergel, Prof. Ming Fang, Prof. Michael Ehrig, Prof. Wei Xiao and Dr. Huang Lin for their helpful comments and advices.
\bigskip


\section{Preliminary}


Let $\fg$ be a finite dimensional complex semisimple Lie algebra with a triangular decomposition $\fg=\fn\oplus\fh\oplus\fn^{-}$. Let $\Phi$ be the root system of $\fg$ and $W$ the Weyl group of $\fg$. Let $\Pi$ be the set of simple roots in $\Phi$ determined by $\mathfrak{b}:=\fn\oplus\fh$.

\begin{definition} We define the BGG category $\O$ to be the full subcategory of all finitely generated $U(\fg)$-modules $M$ such that $M$ is locally finite for $U(\fn)$ and has weight spaces decomposition under the action of $\fh$.
\end{definition}

For each $\lam\in\fh^*$, we use $L(\lam)$, $\Delta(\lam)$, $P(\lam)$ and $I(\lam)$ to denote the simple module, Verma module, indecomposable projective module, indecomposable injective module labelled by $\lam$ respectively. We refer the readers to \cite{Hum} for their definitions.
For any $w\in W$ and $\lam\in\fh^*$, we define $w\cdot\lam:=w(\lam+\rho)-\rho$, where $\rho$ is the half sum of all the positive roots. We use $\O_\lam$ to denote the Serre subcategory of $\O$ generated by all $L(w\cdot\lam)$ for $w\in W$. We have a decomposition $\O=\oplus_{\lam\in\fh^*/(W,\cdot)}\O_\lam$.

Let $\Pi^\vee:=\{\alpha^\vee|\alpha\in\Pi\}$ the set of simple coroots in $\fh$. A weight $\lam\in\fh^*$ is called integral if $\<\lam,\alpha^\vee\>\in\Z$ for any $\alpha\in\Pi$. By the results in \cite{So}, the study of $\O_\lam$ can be reduced to the case when $\lam$ is integral. Henceforth, we shall always assume that $\lam\in\fh^*$ is integral. In this case, each $\O_\lam$ is a block of $\O$.

\begin{definition} Let $\lam\in\fh^*$ be an integral weight. We call $\lam$ dominant if $\<\lam+\rho,\alpha^\vee\>\geq 0$ for any $\alpha\in\Pi$, and call $\lam$ anti-dominant if $\<\lam+\rho,\alpha^\vee\>\leq 0$ for any $\alpha\in\Pi$.
\end{definition}

An integral weight $\lam$ is said to be regular if $\<\lam+\rho,\alpha^\vee\>\neq 0$ for any $\alpha\in\Phi$, otherwise it is said to be singular. Let $\lam\in\fh^*$ be an integral weight. We define $$
W_\lam:=\{w\in W|w\cdot\lam=\lam\} .
$$
Clearly, $\lam$ is regular if and only if $W_\lam=\{e\}$, and $W_\lam$ is a reflection subgroup of $W$ generated by the reflections it contains by \cite[\S1.12, Theorem]{Hum0}.
If $\lam$ is dominant or anti-dominant, then by \cite[\S1.12, Theorem, Exercise 2]{Hum0}, $W_\lam$ is a standard parabolic subgroup of $W$. In this case, we denote by $w_{0,\lam}$ the unique longest element in $W_\lam$.We use $W^\lam$ (resp., $^{\lam}W$) to denote the set of minimal length left (resp., right) coset representatives of $W_\lam$ in $W$.

Under the dot action of $W$, any $W$-orbit of an integral weight contains a unique dominant weight as well as a unique anti-dominant weight. Thus we can use either the dominant weights or the anti-dominant weights to index the orbits of $W$.

\begin{definition} Let $\lam$ be an integral dominant weight. We define the basic algebra of $\O_\lam$ to be $$
A_\lam:=\End_{\O}\Bigl(\bigoplus_{w\in W^\lam}P(w\cdot\lam)\Bigr) .
$$
\end{definition}
By \cite{Hum}, $A_\lam$ is a finite dimensional quasi-hereditary algebra in the sense of Cline-Parshall-Scott \cite{CPS}. The functor $$
\mathcal{F}_\lam(-):=\Hom_\O\Bigl(\bigoplus_{w\in W^\lam}P(w\cdot\lam),-\Bigr)$$ defines an equivalence between
the category $\O_\lam$ and the category $\rmod A_\lam$ of finite dimensional right $A_\lam$-modules.

Let $K$ be a field. A $\Z$-graded $K$-module a $K$-linear space $M$ which has a direct sum decomposition $M=\oplus_{d\in\Z}M_d$. If $m\in M_d$ for $d\in\Z$, then
we say $m$ is homogeneous of degree $d$ and set $\deg m = d$. If $M$ is a graded $K$-module, let $\underline{M}$ be the ungraded $K$-module obtained by forgetting the grading on $M$. If $M$ is a graded $K$-module and $k\in\Z$, let $M\<k\>$ be the graded $K$-module obtained by shifting the grading on $M$ up by $k$; that is,
$M\<k\>_d:=M_{d-k}$, for $d\in\Z$. A graded $K$-algebra is a unital associative $K$-algebra $A=\oplus_{d\in\Z}A_d$ which is a graded $K$-module
such that $A_dA_e\subseteq A_{d+e}$, for all $d,e\in\Z$. A graded (right) $A$-module is a graded $K$-module $M$ such that $M$ is an $A$-module
and $M_dA_e\subseteq M_{d+e}$, for all $d,e\in Z$. Graded submodules, graded left $A$-modules and so on are all defined in the obvious way. Let $A$ be a graded $K$-algebra. We use $\rgmod A$ to denote the category of all finite dimensional graded right $A$-modules together with degree preserving homomorphisms; that is, $$
\ho_A(M,N) = \{f\in \Hom_A(\underline{M},\underline{N}) | \text{$f(M_d)\subseteq N_d$ for all $d\in\Z$}\},
$$
for all $M, N\in \rgmod A$. The elements of $\ho_A(M,N)$ are homogeneous maps of degree $0$. More generally, if $f\in\ho_A(M\<d\>,N)\cong
\ho_A(M,N\<-d\>)$ then $f$ is a homogeneous map from $M$ to $N$ of degree $d$ and we write $\deg f = d$. In a similar way, one can define the category $A\lgmod$
of  finite dimensional graded left $A$-modules.

Let $R:=S(\fh^*)$ be the symmetric algebra on $\fh^*$. There is a natural action of $W$ on $\fh$ and hence on $\fh^*$ and $S(\fh^*)$.
We regard $S(\fh^*)$ as a $\Z$-graded algebra by endowing each element in $\fh^*$ degree $2$. By definition, $S(\fh^*)=\oplus_{j\geq 0}S_j(\fh^*)$.

\begin{definition}\label{iw} Let $I_W$ denote the homogeneous ideal of $S(\fh^*)$ generated by $R_+^W:=\oplus_{j\geq 1}{S_j}(\fh^*)^W$. We define
$$
C:={S(\fh^*)}/{I_W},
$$
and call $C$ the coinvariant algebra of $W$.
\end{definition}

Let $S:=\{s_\alpha|\alpha\in\Pi\}$ be the set of simple reflections in $W$. Let $\ell(-)$ be the length function on $W$ with respect to $S$.
Let $w_0$ be the unique longest element in $W$. Let $\lam$ be an integral dominant weight as before. Then $P(w_0\cdot\lam)$ is the unique indecomposable projective and injective module in $\O_\lam$.

\begin{lemma}\text{(\cite{So})}\label{c0} Let $\lam$ be an integral dominant weight. There is a $\C$-algebra isomorphism $\theta: C_\lam:=C^{W_\lam}\cong\End_{\O}\bigl(P(w_0\cdot\lam)\bigr)$.
\end{lemma}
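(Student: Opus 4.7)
The statement is Soergel's Endomorphismensatz in its singular/parabolic form. My plan is to construct $\theta$ using the Harish-Chandra action of the center together with a deformation argument, then show $\theta$ is an isomorphism by comparing dimensions and establishing surjectivity.

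For the construction of $\theta$, I would use the Harish-Chandra isomorphism to identify the center $Z(U(\fg)) \cong S(\fh^*)^W$ (via the dot action, related to the linear action by the $\rho$-shift). Since every module in $\O_\lam$ has generalised central character $\chi_\lam$, the center acts on $P(w_0\cdot\lam)$ through the corresponding local quotient, producing a canonical map $Z(U(\fg)) \to \End_\O(P(w_0\cdot\lam))$. To upgrade this to a map from all of $S(\fh^*)$ (which will then descend to $C$), I would invoke Soergel's deformation: $P(w_0\cdot\lam)$ admits a universal deformation $\widetilde P$ over $R := S(\fh^*)$ whose endomorphism ring is free of finite rank over $R^W$, and specialising at the maximal ideal corresponding to $\lam+\rho$ yields the required map. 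Because $\lam$ is dominant, $W_\lam$ fixes $\lam+\rho$, so the specialisation factors through $R^{W_\lam}$; moreover, the central character kills $I_W$ by construction. Together these two observations produce a $\C$-algebra homomorphism $\theta\colon C^{W_\lam} = R^{W_\lam}/(R^{W_\lam}\cap I_W) \to \End_\O(P(w_0\cdot\lam))$.

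To verify $\theta$ is an isomorphism, I would compare dimensions. By Chevalley--Shephard--Todd, $R$ is free of rank $|W|$ over $R^W$, so $\dim_\C C = |W|$ and $\dim_\C C^{W_\lam} = |W|/|W_\lam| = |W^\lam|$. On the other side, the standard identity $\dim\Hom_\O(P(\mu),M) = [M:L(\mu)]$ yields
\[
\dim_\C \End_\O(P(w_0\cdot\lam)) = [P(w_0\cdot\lam):L(w_0\cdot\lam)].
\]
Combined with BGG reciprocity and the character identity $\operatorname{ch} P(w_0\cdot\lam) = \sum_{w\in W^\lam}\operatorname{ch}\Delta(w\cdot\lam)$ (each Verma in the block appears with multiplicity one in the standard filtration of the big projective), this gives $\dim\End_\O(P(w_0\cdot\lam)) = |W^\lam|$. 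Thus it suffices to prove that $\theta$ is surjective.

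The main obstacle is exactly this surjectivity, which is the deep core of Soergel's Endomorphismensatz: one must produce enough non-central endomorphisms of $P(w_0\cdot\lam)$ from the specialisation of the deformed endomorphism algebra $\End(\widetilde P)$. An alternative reduction strategy is to first establish the regular case $W_\lam = \{e\}$ by showing the universal deformation over $R$ is generated by $R$ itself over $R^W$ (so specialises to $C$), and then to descend to the singular case via translation functors: pick a regular dominant integral $\mu$, use $T_\mu^\lam P(w_0\cdot\mu) \cong P(w_0\cdot\lam)$, and identify $\End_\O(P(w_0\cdot\lam))$ with the $W_\lam$-fixed part of $\End_\O(P(w_0\cdot\mu)) \cong C$ via the adjunction formulas for translation functors.
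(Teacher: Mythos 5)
The paper offers no proof of this lemma: it is quoted verbatim from Soergel's Endomorphismensatz, so there is nothing in the text to compare your argument against line by line. Judged on its own, your write-up is a correct \emph{roadmap} of Soergel's strategy rather than a proof. The dimension count is fine (though you should say explicitly that it rests on $[\Delta(w\cdot\lam):L(w_0\cdot\lam)]=1$ for every $w\in W^\lam$, i.e. the antidominant simple occurs exactly once in each Verma module of the block; this is what turns BGG reciprocity into $\dim\End_\O(P(w_0\cdot\lam))=|W^\lam|$). But the step you defer --- surjectivity of $\theta$, equivalently that the deformed endomorphism ring is as large as $S(\fh^*)^{W_\lam}\otimes_{S(\fh^*)^W}\C$ --- \emph{is} the Endomorphismensatz. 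The center of $U(\fg)$ alone only produces the image of $R^W$ in $C^{W_\lam}$, which is just $\C$, so everything beyond scalars has to come from the deformation argument you merely name. As a proof the proposal therefore has a gap coextensive with the theorem.

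There is also a concrete error in your fallback reduction to the regular case. Translation onto the wall does not send the big projective to the big projective: since $(P(w_0\cdot\mu):\Delta(w\cdot\mu))=1$ for all $w\in W$ and each $\Delta(w\cdot\mu)$ with $w$ in a fixed coset $xW_\lam$ translates to $\Delta(x\cdot\lam)$, one gets $T_\mu^\lam P(w_0\cdot\mu)\cong P(w_0\cdot\lam)^{\oplus|W_\lam|}$ (check $\mathfrak{sl}_2$ with $\lam=-\rho$). The indecomposable statement goes the other way: $\theta_\lam^{\rm out}P(w_0\cdot\lam)\cong P(w_0\cdot 0)$, which is the paper's Lemma \ref{lift0}(4) applied to $x=w_0w_{0,\lam}$. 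With the correct isomorphisms the adjunction yields $\End_\O(\theta_\lam^{\rm out}P(w_0\cdot\lam))\cong\Hom_\O(P(w_0\cdot\lam),\theta_\lam^{\rm on}\theta_\lam^{\rm out}P(w_0\cdot\lam))\cong\End_\O(P(w_0\cdot\lam))^{\oplus|W_\lam|}$ as a vector space, so identifying $\End_\O(P(w_0\cdot\lam))$ with precisely the $W_\lam$-invariants of $C\cong\End_\O(P(w_0\cdot 0))$ still requires exhibiting the $W_\lam$-action on the right-hand side and matching it with the map $f\mapsto\theta_\lam^{\rm out}(f)$; this is genuine work in Soergel's paper, not a formal consequence of adjunction. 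So the descent step, as written, both starts from a false isomorphism and omits the identification that makes it land in $C^{W_\lam}$.
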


\begin{dfnthm}\text{(\cite{So})}\label{ff} Let $\lam$ be an integral dominant weight. We call the following functor $$\begin{aligned}
\mathbb{V}_\lam:\,\, & \O_\lam\rightarrow\rmod C_\lam \\
& M\mapsto\Hom_{\O}\bigl(P(w_0\cdot\lam), M\bigr), \quad\forall\, M\in\O_\lam ,
\end{aligned}
$$
the combinatorial $\mathbb{V}$-functor. Then $\mathbb{V}_\lam$ is exact and fully faithful on projectives. That is, for any $x,y\in W^\lam$, $$
\Hom_{\O}\bigl(P(x\cdot\lam),P(y\cdot\lam)\bigr)\cong\Hom_{C^{\lam}}\bigl(\mathbb{V}_\lam(P(x\cdot\lam)),\mathbb{V}_\lam(P(y\cdot\lam))\bigr) .
$$
\end{dfnthm}

\begin{remark} 1) The module $\mathcal{F}_\lam(P(w_0\cdot\lam))$ is actually a faithful projective and injective module over $A_\lam$. Definition and Theorem \ref{ff} is a consequence of the double centralizer property with respect to this projective and injective module. K\"onig, Xi and Slung{\aa}rd deduced this double centralizer property directly from the fact that $A_\lam$ has dominant dimension at least two (\cite{KSX}). Such kind of double centralizer property actually holds for any faithful tilting modules, see \cite{HuX} for more details.

2) The combinatorial $\mathbb{V}$-functor $\mathbb{V}_\lam$ is an analogue of the well-known Schur functor in algebraic Lie theory. One can think of each $\mathbb{V}_\lam(P(x\cdot\lam))$ as an analogue of the Young module for the symmetric groups or Hecke algebras of type $A$, see \cite{Gr}.
\end{remark}

\begin{corollary}\label{indec} Let $\lam$ be an integral dominant weight. Then for each $x\in W^\lam$, $\mathbb{V}_\lam(P(x\cdot\lam))$ is an indecomposable $C_\lam$-module.
Moreover, for any $x,y\in W^\lam$, $\mathbb{V}_\lam(P(x\cdot\lam))\cong \mathbb{V}_\lam(P(y\cdot\lam))$ if and only if $x=y$.
\end{corollary}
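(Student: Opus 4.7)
The plan is to deduce both statements directly from the fully faithful property of $\mathbb{V}_\lam$ on projective modules given in Definition and Theorem \ref{ff}, combined with the classical facts about indecomposable projectives in $\O_\lam$.

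For indecomposability, I would start from the fact that $P(x\cdot\lam)$ is indecomposable in $\O$, so $\End_\O(P(x\cdot\lam))$ is a local (finite-dimensional) $\C$-algebra. Applying Definition and Theorem \ref{ff} with $x=y$ gives a $\C$-algebra isomorphism
\[
\End_\O\bigl(P(x\cdot\lam)\bigr)\cong\End_{C_\lam}\bigl(\mathbb{V}_\lam(P(x\cdot\lam))\bigr),
\]
so the right-hand side is also local. A finite-dimensional module with local endomorphism ring is indecomposable (by Fitting's lemma), which gives the first assertion.

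For the second assertion, one direction is trivial. For the nontrivial direction, suppose there exists an isomorphism $\varphi:\mathbb{V}_\lam(P(x\cdot\lam))\xrightarrow{\sim}\mathbb{V}_\lam(P(y\cdot\lam))$ in $\rmod C_\lam$, with inverse $\psi$. By fullness from \ref{ff}, I can lift $\varphi$ and $\psi$ to morphisms $f:P(x\cdot\lam)\to P(y\cdot\lam)$ and $g:P(y\cdot\lam)\to P(x\cdot\lam)$ with $\mathbb{V}_\lam(f)=\varphi$, $\mathbb{V}_\lam(g)=\psi$. Then $\mathbb{V}_\lam(g\circ f)=\psi\circ\varphi=\id=\mathbb{V}_\lam(\id_{P(x\cdot\lam)})$, and faithfulness of $\mathbb{V}_\lam$ on projectives forces $g\circ f=\id_{P(x\cdot\lam)}$; symmetrically $f\circ g=\id_{P(y\cdot\lam)}$. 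Hence $P(x\cdot\lam)\cong P(y\cdot\lam)$ in $\O_\lam$, and since the indecomposable projectives in $\O_\lam$ are labelled (up to isomorphism) bijectively by their simple tops $L(w\cdot\lam)$ for $w\in W^\lam$, we conclude $x=y$.

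The only step requiring any care is the passage from a local endomorphism ring to indecomposability; this is standard for finite-dimensional modules but should be mentioned since $\mathbb{V}_\lam(P(x\cdot\lam))$ is not obviously finite-dimensional a priori — however, the isomorphism with $\End_\O(P(x\cdot\lam))$ (which is finite-dimensional by \cite{Hum}), together with the fact that $\mathbb{V}_\lam(P(x\cdot\lam))$ is a finitely generated $C_\lam$-module (as $C_\lam$ is finite-dimensional by Lemma \ref{c0}), settles this. No obstacles are expected; the corollary is essentially a formal consequence of \ref{ff}.
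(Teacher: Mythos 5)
Your proposal is correct and follows essentially the same route as the paper: the paper likewise deduces indecomposability from the isomorphism $\End_{C_\lam}\bigl(\mathbb{V}_\lam(P(x\cdot\lam))\bigr)\cong\End_\O\bigl(P(x\cdot\lam)\bigr)$ of Definition and Theorem \ref{ff} together with locality of the latter ring, and your lifting argument via full faithfulness for the ``only if'' direction is the standard formal step the paper leaves implicit.
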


\begin{proof} This is clear by Definition and Theorem \ref{ff}, $$
\End_{C_\lam}\bigl(\mathbb{V}_\lam(P(x\cdot\lam))\bigr)\cong\End_\O\bigl(P(x\cdot\lam)\bigr)\cong\End_{A_\lam}\bigl(\mathcal{F}_\lam(P(x\cdot\lam))\bigr),
$$
while the latter is a local ring (because $\mathcal{F}_\lam(P(x\cdot\lam))$ is an indecomposable $A_\lam$-module).
\end{proof}

\begin{theorem}\text{(\cite{So})}\label{BS0} Let $\lam$ be an integral dominant weight and $x\in W^\lam$. Let $s_r\cdots s_2s_1$ be a reduced expression of $x$. Then the module $\mathbb{V}_\lam(P(x\cdot\lam))$ is isomorphic to a direct summand of \begin{equation}\label{BS}
C_\lam\otimes_{(C_\lam)^{s_1}}C_\lam\otimes_{(C_\lam)^{s_2}}C_\lam\otimes_{(C_\lam)^{s_3}}\cdots C_\lam\otimes_{(C_\lam)^{s_r}}\C .
\end{equation}
If furthermore $\lam$ is regular, then it occurs with multiplicity one and all the other direct summands are of the form $\mathbb{V}_\lam(P(y\cdot\lam))$ with $x>y\in W^\lam$.
\end{theorem}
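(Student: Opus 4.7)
The plan is to proceed by induction on $r = \ell(x)$, exploiting the interplay between the wall-crossing translation functors $\theta_s$ on $\O_\lam$ and the functors $C_\lam \otimes_{(C_\lam)^s} (-)$ on graded $C_\lam$-modules.

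For the base case $r = 0$, one has $x = e$ and $P(\lam) = \Delta(\lam)$ (since $\lam$ is maximal in its $W_\lam$-dot-orbit, so BGG reciprocity gives $[P(\lam) : \Delta(\nu \cdot \lam)] = \delta_{\nu, e}$), while \eqref{BS} reduces to $\C$. The claim then becomes $\mathbb{V}_\lam(\Delta(\lam)) \cong \C$ with the trivial $C_\lam$-module structure, which follows from Lemma \ref{c0} by a direct computation.

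For the inductive step, I would pick $s \in S$ and $y \in W^\lam$ with $x = sy$ and $\ell(y) = r - 1$ (peeling $s = s_r$ off the left of the reduced expression; a short argument using the standard characterization of $W^\lam$ by $\{w : \ell(ws_\alpha) > \ell(w),\ \forall s_\alpha \in W_\lam\}$ confirms that $y$ remains in $W^\lam$). The induction then rests on two ingredients. First, on the category side, $\theta_s P(y \cdot \lam)$ contains $P(x \cdot \lam)$ as a direct summand; in the regular case this summand appears with multiplicity one, and the remaining summands are of the form $P(z \cdot \lam)$ with $z < x$ in Bruhat order. Second, on the combinatorial side, there is a natural isomorphism of functors
$$
\mathbb{V}_\lam \circ \theta_s \;\cong\; C_\lam \otimes_{(C_\lam)^s} \mathbb{V}_\lam(-) .
$$
Combining these with the inductive hypothesis yields $\mathbb{V}_\lam(P(x \cdot \lam))$ as a direct summand of \eqref{BS}. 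In the regular case, Corollary \ref{indec} guarantees that summand multiplicities are well-defined; the multiplicity-one and Bruhat-order assertions then follow by tracking the recursion, since all summands supplied by the inductive hypothesis are indexed by $z \le y < x$, while the first ingredient contributes exactly one new copy of $P(x \cdot \lam)$.

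The hardest step will be establishing the second ingredient, the intertwining isomorphism, which lies at the technical heart of Soergel's combinatorial description of translation functors. The route I have in mind is to identify $\theta_s P(w_0 \cdot \lam)$, regarded as a $(C_\lam, C_\lam)$-bimodule via Lemma \ref{c0}, with $C_\lam \otimes_{(C_\lam)^s} C_\lam$ — a bimodule refinement of that lemma — and then to transport this identification across the full faithfulness of $\mathbb{V}_\lam$ on projectives from Definition and Theorem \ref{ff} using adjunction between translation to and from the $s$-wall.
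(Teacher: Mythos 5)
The paper offers no proof of Theorem \ref{BS0}: it is quoted from \cite{So}. Your strategy --- induction on $\ell(x)$ driven by the decomposition of $\theta_s P(y\cdot\lam)$ into indecomposable projectives together with the intertwining isomorphism $\mathbb{V}_\lam\circ\theta_s\cong C_\lam\otimes_{(C_\lam)^s}\mathbb{V}_\lam(-)$, the latter obtained from a bimodule identification of $\theta_s$ applied to the big projective and full faithfulness of $\mathbb{V}_\lam$ on projectives --- is exactly the standard (Soergel's) route, and your base case and multiplicity-one bookkeeping in the regular case are sound. However, two points need repair before the induction actually closes.

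First, you peel the wrong end of the word. In this paper's conventions wall-crossing multiplies on the right: $\theta_sP(y\cdot\lam)$ contains $P(ys\cdot\lam)$ when $ys>y$ (compare ``$\theta_w$ is a direct summand of $\theta_{s_r}\cdots\theta_{s_2}\theta_{s_1}$'' for $w=s_1s_2\cdots s_r$), and in \eqref{BS} the factor $C_\lam\otimes_{(C_\lam)^{s_r}}\C$ sits innermost, so the \emph{last} functor applied is $C_\lam\otimes_{(C_\lam)^{s_1}}(-)$. The inductive step must therefore write $x=ys_1$ with $y=s_r\cdots s_2$ and apply $\theta_{s_1}$ last; with your choice $x=s_ry$, the module $\theta_{s_r}P(y\cdot\lam)$ contains $P(ys_r\cdot\lam)$, which is not $P(x\cdot\lam)$, and the resulting tensor product has $s_r$ outermost, which is not \eqref{BS}. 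Second, and more substantively, the singular case is not covered. Once you peel the correct (rightmost) letter, the truncation $y=s_r\cdots s_2$ need \emph{not} lie in $W^\lam$ (take $W$ of rank $2$ with $W_\lam=\langle s\rangle$ and $x=st$: then $y=s\notin W^\lam$), so the inductive hypothesis does not apply to $y$ as stated; your observation that the truncation stays in $W^\lam$ is correct only for the left-end peeling, which is precisely the one that breaks the functorial matching. Moreover, for $s\in W_\lam$ the endofunctor $\theta_s$ of $\O_\lam$ and the asserted intertwining with $C_\lam\otimes_{(C_\lam)^s}(-)$ require justification that your sketch does not supply. The usual remedy is to run your induction only for regular $\lam$ and then deduce the singular statement by translation onto the wall, using $\res_{C_\lam}^{C}\mathbb{V}P\cong\mathbb{V}_\lam\theta_\lam^{\rm on}P$ (\cite[Theorem 10]{So}, invoked in the proof of Theorem \ref{adj2}) and $\theta_\lam^{\rm out}P(x\cdot\lam)\cong P(xw_{0,\lam}\cdot 0)$; some such additional step is genuinely needed here.
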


In the study of $\Z$-graded representation theory of a graded algebra $A$, it is important to understand whether a module or a functor in $\underline{A}\lmod$ is gradable (i.e., allows a $\Z$-graded lift). We refer the readers to \cite{GG} and \cite[\S3.1,3.2]{Str} for precise definitions and more details.

\begin{corollary}\label{bs} Let $\lam$ be an integral dominant weight and $x\in W^\lam$. Then $\mathbb{V}_\lam(P(x\cdot\lam))$ can be endowed with a $\Z$-grading such that it becomes a $\Z$-graded $C_\lam$-module. In other words, $\mathbb{V}_\lam(P(x\cdot\lam))$ is $\Z$-gradable.
\end{corollary}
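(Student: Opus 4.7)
The plan is to combine Theorem~\ref{BS0} with a standard gradability principle. By Theorem~\ref{BS0}, for a reduced expression $x=s_r\cdots s_1$, the module $\mathbb{V}_\lam(P(x\cdot\lam))$ is isomorphic to an ungraded direct summand of the Bott-Samelson module
\[
B(x):=C_\lam\otimes_{(C_\lam)^{s_1}}C_\lam\otimes_{(C_\lam)^{s_2}}\cdots\otimes_{(C_\lam)^{s_r}}\C,
\]
and by Corollary~\ref{indec} this summand is indecomposable. My first step is to observe that $B(x)$ carries a natural $\Z$-grading as a $C_\lam$-module: by Definition~\ref{iw}, $C_\lam=C^{W_\lam}$ inherits a grading from $S(\fh^*)$; each simple reflection $s_i$ acts by graded automorphisms on $C_\lam$, so every $(C_\lam)^{s_i}$ is a graded subalgebra; and $\C$ is a graded $(C_\lam)^{s_r}$-module concentrated in degree $0$. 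The iterated tensor product is therefore a $\Z$-graded $C_\lam$-module.

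Next, I would invoke the standard gradability principle (see \cite{GG} and \cite[\S3.1,3.2]{Str}) that any indecomposable direct summand of a finite-dimensional gradable module over a $\Z$-graded algebra is itself gradable. The indecomposable summand $\mathbb{V}_\lam(P(x\cdot\lam))$ of $B(x)$ corresponds to a primitive idempotent $e$ in the endomorphism algebra $E:=\End_{C_\lam}(B(x))$, and since $B(x)$ is graded, $E$ inherits a $\Z$-grading. It suffices to replace $e$ by a conjugate primitive idempotent $e'$ that is homogeneous of degree $0$: the image $e'B(x)$ is then a $\Z$-graded $C_\lam$-submodule of $B(x)$ which is ungraded-isomorphic to $\mathbb{V}_\lam(P(x\cdot\lam))$, producing the desired graded lift.

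The main obstacle is the homogeneous idempotent lifting, although it is by now standard. For a finite-dimensional $\Z$-graded algebra $E$ over $\C$, the Jacobson radical $\rad(E)$ is a graded ideal, so $E/\rad(E)$ is graded semisimple; a graded refinement of the classical idempotent-lifting theorem then yields a complete system of orthogonal primitive idempotents that are homogeneous of degree $0$. This is exactly the content of the gradability discussion in \cite{GG} and \cite[\S3.1,3.2]{Str}, and completes the proof.
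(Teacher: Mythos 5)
Your proposal is correct and follows exactly the route of the paper's proof: Theorem~\ref{BS0} exhibits $\mathbb{V}_\lam(P(x\cdot\lam))$ as an indecomposable direct summand of the naturally $\Z$-graded Bott--Samelson module (\ref{BS}), and the standard gradability result of Gordon--Green \cite[Theorem 3.3]{GG} then yields a graded lift. The paper simply cites \cite{GG} where you spell out the homogeneous idempotent-lifting argument, but the content is the same.
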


\begin{proof} This follows from Theorem \ref{BS0}, the fact that (\ref{BS}) is a $\Z$-graded $C_\lam$-module and a standard result \cite[Theorem 3.3]{GG}.
\end{proof}

Let $I$ be a subset of $\Pi$. Let $\frp_I$ be the standard parabolic subalgebra of $\frg$ corresponding to $I$ with Levi decomposition $\frp_I:=\frl_I\oplus\fru_I$, where $\frl_I$ is the standard Levi subalgebra and $\fru_I$ the nilpotent radical of $\frp_I$. The parabolic category $\O^{\frp_I}$ associated to $\frp_I$ is defined to be the category of all finitely generated $\frg$-modules $M$, which are semisimple as $\frl_I$-modules and locally $U(\fru_I)$-finite. If $I=\emptyset$, $\O^{\frp_I}$ is the usual BGG category $\O$; if $I=\Pi$, then $\O^{\frp_I}$ is the category of finite dimensional $\fg$-modules. In general, $\O^{\frp_I}$ is a full subcategory of $\O$.
If $\mu\in\Lam$ is dominant, then we use $\frp(\mu)=\frp_{I_\mu}$ to denote the standard parabolic subalgebra of $\frg$ corresponding to the subset $$I_\mu:=\{\alpha\in\Pi\,|\,\<\mu+\rho,\alpha^\vee\>=0\}$$ of $\Pi$. We define
$\O^\mu:=\O^{\frp(\mu)}$ and $\O_\lam^\mu:=\O^\mu\cap\O_{\lam}$ for any $\lam\in\Lam$.

Let $$
\Lam^+:=\{\nu\in\Lam|\<\nu,\alpha^\vee\>\geq 0,\forall\,\alpha\in\Pi\},\quad
\Lam_I^+:=\{\nu\in\Lam|\<\nu,\alpha^\vee\>\geq 0,\forall\,\alpha\in I\} .
$$
Then for any $w\in W$, $L(w\cdot\lam)\in\O_\lam^\mu$ if and only if $w\cdot\lam\in\Lam_{I_\mu}^+$.

We fix two integral weights $\lam,\mu$. Define \begin{equation}\label{ij1}
I=\bigl\{\alpha\in\Pi\bigm|\langle\mu+\rho,\alpha^\vee\rangle=0\bigr\},\quad J:=\bigl\{\alpha\in\Pi\bigm|\langle\lambda+\rho,\alpha^\vee\rangle=0\bigr\}.
\end{equation}
We define \begin{equation}\label{ij}
{}^IW^J=\{w\in{}^IW\mid \ell(w)+1=\ell(ws_\alpha)\ \mbox{and}\ ws_\alpha\in {}^IW,\ \mbox{for all}\ \alpha\in J\}.
\end{equation}
Note that ${}^IW^J$ is a (possibly empty) subset of ${}^I W\cap W^J$ (the set of minimal length $(W_I,W_J)$ double cosets representatives in $W$). Suppose that both $\lam,\mu$ are  integral dominant weights. Then every integral weight $\mu\in W\cdot\lambda\cap\Lambda_I^+$ can be uniquely written in the form $\mu=w\cdot\lambda$ for some $w\in{}^IW^J$. Therefore, the simple modules in $\O_\lam^\mu$ are given by $\{L(w\cdot\lam)|w\in {}^IW^J\}$. For each $w\in {}^IW^J$, let $P^\mu(w\cdot\lam)$ be the projective cover of $L(w\cdot\lam)$ in $\O_\lam^\mu$, let $\Delta^\mu(w\cdot\lam)\in\O_\lam^\mu$ be the parabolic Verma module which is the maximal quotient of the usual Verma module $\Delta(w\cdot\lam)$ which lies in $\O_\lam^\mu$. It
is a quotient of $P^\mu(w\cdot\lam)$ and has a unique simple head $L(w\cdot\lam)$.

To ease the notations, we set $$\begin{aligned}
& A:=A_0,\quad \mathbb{V}:=\mathbb{V}_0,\quad P:=\oplus_{x\in W}P(x\cdot 0),\quad P_\lam:=\oplus_{x\in W^\lam}P(x\cdot\lam) ,\\
& P^\mu_\lam:=\oplus_{x\in {}^IW^J}P^\mu(x\cdot\lam),\quad A_\lam^\mu:=\End_\O(P^\mu_\lam) .
\end{aligned}
$$

\begin{theorem}\text{(\cite{Bac}, \cite{BGS})}\label{koszul} Let $\lam,\mu$ be two integral dominant weights. Let $I,J$ be defined as in (\ref{ij1}). Then $A_{\lam}^\mu$ is a finite dimensional Koszul algebra and there is a $\Z$-graded $\C$-algebra isomorphism: $$
A_{\lam}^\mu\cong\Ext_{\O_{-w_0\mu}^{\lam}}^{\bullet}\Bigl(\oplus_{x\in {}^JW^{-w_0I}}L(x\cdot(-w_0\mu)),\oplus_{x\in {}^JW^{-w_0I}}L(x\cdot(-w_0\mu))\Bigr),
$$
and there is a Koszul duality functor $K$ which gives an equivalence from $D^b(A_\lam^\mu\lgmod)$ onto $D^b(A_{-w_0\mu}^{\lam}\lgmod)$, and it sends simple (resp., indecomposable projective) $A_\lam^\mu$-modules to indecomposable injective (resp., simple) $A_{-w_0\mu}^{\lam}$-modules.
 \end{theorem}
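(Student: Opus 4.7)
The plan is to follow the strategy of Beilinson-Ginzburg-Soergel for the regular integral case, with Backelin's extension to arbitrary integral dominant $\lam,\mu$. The argument splits naturally into three stages: (a) produce a $\Z$-grading on $A_\lam^\mu$; (b) prove that this grading is Koszul; and (c) identify the Koszul dual and construct $K$.

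For (a), I would use Soergel's combinatorial $\mathbb{V}$-functor from Definition and Theorem \ref{ff} together with its parabolic variant. The functor $\mathbb{V}_\lam$ identifies $A_\lam$ with an endomorphism algebra of the graded $C_\lam$-modules $\mathbb{V}_\lam(P(x\cdot\lam))$, each of which is $\Z$-gradable by Corollary \ref{bs}; choosing compatible graded lifts transports the natural grading on $C_\lam$ inherited from $R=S(\fh^*)$ to $A_\lam$, and passing through the corresponding parabolic quotient functor onto $\O_\lam^\mu$ descends the grading to $A_\lam^\mu$. Step (b) is the technical heart: one must verify positivity, semisimplicity in degree zero, and concentration of $\Ext^i(L,L')$ in degree $i$. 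Algebraically this reduces to showing that the graded multiplicities of standard filtrations of projectives are governed by parabolic Kazhdan-Lusztig polynomials with the correct degree conventions; geometrically, after Beilinson-Bernstein localization, $\O_\lam^\mu$ becomes a category of $B$-equivariant perverse sheaves on the partial flag variety $G/P(\mu)$, and Koszulity is a consequence of the pointwise purity of intersection cohomology complexes (the Beilinson-Bernstein-Deligne decomposition theorem). This is the single deepest input to the theorem and where I expect essentially all of the work to lie.

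Once Koszulity is in hand, the identification of $A_\lam^\mu$ with the $\Ext$-algebra in $\O_{-w_0\mu}^\lam$ follows from the general Koszul duality formalism $A\cong\Ext^\bullet_{A^!}(A^!_0,A^!_0)$, combined with a combinatorial check that the index sets ${}^IW^J$ and ${}^JW^{-w_0I}$ are in natural bijection via the involution induced by $-w_0$ (using $W_{-w_0I}=w_0W_Iw_0$ and the bijection of double coset representatives under conjugation by $w_0$ and taking inverses). The derived equivalence $K\colon D^b(A_\lam^\mu\lgmod)\to D^b(A_{-w_0\mu}^\lam\lgmod)$ is then the standard Koszul duality functor built from the Koszul bar complex, as developed in \cite[\S2]{BGS}; the claim that $K$ sends simples to indecomposable injectives and indecomposable projectives to simples is formal, since on the dual side the roles of simples and indecomposable projectives (resp.\ indecomposable injectives) are precisely swapped by the general Koszul duality formalism.
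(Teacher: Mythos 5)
The paper does not prove this theorem: it is stated as a quotation from \cite{BGS} and \cite{Bac}, so there is no in-paper argument to compare yours against. Judged on its own terms, your outline correctly names the main ingredients of the regular/parabolic cases treated in \cite{BGS} (Soergel's $\mathbb{V}$-functor to transport the grading, purity of IC sheaves as the source of Koszulity, the bar-complex duality functor and the formal exchange of simples, projectives and injectives), and the bookkeeping with $W_{-w_0I}=w_0W_Iw_0$ is the right way to match the two index sets.

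There are, however, two real gaps. First, you present the general $(I,J)$ case as ``BGS plus a combinatorial check,'' but \cite{BGS} only establish Koszulity and the duality when $I=\emptyset$ or $J=\emptyset$; the mixed parabolic--singular case stated here is precisely their Conjecture 1.2.3, and Backelin's proof of it is not a reindexing exercise --- it requires genuinely new input (graded lifts of translation and Zuckerman-type functors, an application of Soergel's functor to both categories simultaneously, and a comparison of the resulting modules over the two coinvariant-type algebras). Your sketch does not indicate how Koszulity of $A_\lam^\mu$ is obtained when both $I$ and $J$ are nonempty, which is the actual content of the theorem beyond \cite{BGS}. Second, the geometric step is misstated: for singular $\lam$ one cannot simply ``localize $\O_\lam^\mu$ to $B$-equivariant perverse sheaves on $G/P(\mu)$.'' The parabolic condition $\mu$ and the singularity of $\lam$ play dual roles (constructibility with respect to $P_{I}$-orbits versus passing to the partial flag variety $G/P_J$), Beilinson--Bernstein localization is not an equivalence at singular central character, and \cite{BGS} handle singular blocks algebraically via translation from the regular block rather than by direct localization. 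As written, step (b) of your argument would not go through in exactly the cases that make the theorem nontrivial.
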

The algebra $A_{-w_0\mu}^{\lam}$ (which is the basic algebra of $\O_{-w_0\mu}^{\lam}$) is called the Koszul dual of $A_{\lam}^\mu$ and denoted by $(A_\lam^\mu)^{!}$.

The Koszul structure on $A_{\lam}^\mu$ has many strong implication on the structure of the category $\O_\lam^\mu$. In particular, one can study the $\Z$-graded representation theory of $\O_\lam^\mu$.
For each $w\in {}^IW^J$, we use $\mathbb{L}(w\cdot\lam)$ to denote the one-dimensional simple module in $\rgmod A_\lam^\mu$ which is concentrated in degree $0$ and hence a graded lift of $L(w\cdot\lam)$. Let $\mathbb{P}^\mu(w\cdot\lam)$ be the projective cover of $\mathbb{L}(w\cdot\lam)$ in $\rgmod A_\lam^\mu$ which has a unique graded simple quotient $\mathbb{L}(w\cdot\lam)$. This is a graded lift of $P^\mu(w\cdot\lam)$.

\begin{lemma}\text{(\cite{BGS},\cite{M3})} For each $w\in {}^IW^J$, the parabolic Verma module $\Delta^\mu(w\cdot\lam)$ allows a $\Z$-graded lift $\mathbb{\Delta}^\mu(w\cdot\lam)$ such that it is a graded quotient of $\mathbb{P}^\mu(w\cdot\lam)$. In particular, it has a unique graded simple quotient which is isomorphic to $\mathbb{L}(w\cdot\lam)$.
\end{lemma}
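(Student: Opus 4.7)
The plan is to realise $\mathbb{\Delta}^\mu(w\cdot\lam)$ as an explicit graded quotient of the graded projective cover $\mathbb{P}^\mu(w\cdot\lam)$, so that the grading on the parabolic Verma module is made manifest.

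First I would invoke the quasi-hereditary structure on $A_\lam^\mu$, where ${}^IW^J$ is ordered by (a suitable refinement of) the restriction of the Bruhat order $\leq$ on $W$. The standard description of standard modules in a quasi-hereditary algebra identifies $\Delta^\mu(w\cdot\lam)$ with the trace quotient
$$\Delta^\mu(w\cdot\lam)\;=\;P^\mu(w\cdot\lam)\big/N(w),$$
where $N(w)$ is the sum of the images of all $A_\lam^\mu$-module homomorphisms $P^\mu(y\cdot\lam)\to P^\mu(w\cdot\lam)$ taken over those $y\in{}^IW^J$ with $y\not\leq w$. This encodes the characterisation of $\Delta^\mu(w\cdot\lam)$ as the maximal quotient of $P^\mu(w\cdot\lam)$ whose composition factors $L(y\cdot\lam)$ all satisfy $y\leq w$.

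Second, I would observe that $N(w)$ is automatically a graded submodule of $\mathbb{P}^\mu(w\cdot\lam)$. Indeed, for any finite dimensional graded modules $M,N$ over a graded algebra, an ungraded $A_\lam^\mu$-module homomorphism $f:M\to N$ decomposes uniquely into homogeneous pieces $f=\sum_k f_k$ with $f_k\in\ho_{A_\lam^\mu}(M,N\<k\>)$, so each $\im f=\sum_k\im f_k$ is a graded submodule of $N$. Applying this to every $f:P^\mu(y\cdot\lam)\to P^\mu(w\cdot\lam)$ with $y\not\leq w$ shows that $N(w)\subseteq\mathbb{P}^\mu(w\cdot\lam)$ is generated by homogeneous elements. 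Hence the quotient
$$\mathbb{\Delta}^\mu(w\cdot\lam)\;:=\;\mathbb{P}^\mu(w\cdot\lam)\big/N(w)$$
is naturally a graded $A_\lam^\mu$-module, and forgetting its grading recovers $\Delta^\mu(w\cdot\lam)$ by the first step. Because $N(w)$ lies inside the graded radical of $\mathbb{P}^\mu(w\cdot\lam)$ (any map from $\mathbb{P}^\mu(y\cdot\lam)$ with $y\ne w$ has image inside the radical), the graded head of $\mathbb{\Delta}^\mu(w\cdot\lam)$ coincides with that of $\mathbb{P}^\mu(w\cdot\lam)$, which is $\mathbb{L}(w\cdot\lam)$ in degree zero.

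The main obstacle is the first step: establishing the trace description $\Delta^\mu(w\cdot\lam)=P^\mu(w\cdot\lam)/N(w)$ by pinning down the correct heredity order on ${}^IW^J$. One must verify that the restriction of the Bruhat order on $W$ to ${}^IW^J$ really serves as a heredity order for $A_\lam^\mu$ compatibly with the usual characterisation of parabolic Verma modules by their composition factor content; alternatively, one may appeal to Mazorchuk's general gradable-lift machinery for standardly stratified graded algebras. Once this identification is in place, the remaining steps — promoting $N(w)$ to a graded submodule and reading off the graded head — are straightforward consequences of the decomposition of $\Hom$ spaces in $\rgmod A_\lam^\mu$.
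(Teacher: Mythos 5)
The paper states this lemma purely as a citation to \cite{BGS} and \cite{M3} and supplies no proof of its own, so there is no internal argument to compare against; what you write is essentially the standard argument from those references (and from \cite{Str}): realise the standard module as a trace quotient of the graded projective cover and observe that the trace submodule is graded. Your proof is correct in substance, with one imprecision worth fixing. For a single homomorphism $f=\sum_k f_k$ one only has $\im f\subseteq\sum_k \im f_k$; equality can fail, since the homogeneous pieces $f_k(x)$ need not individually lie in $\im f$. This does not damage the argument, because $N(w)$ is the sum of the images of \emph{all} homomorphisms $P^\mu(y\cdot\lam)\to P^\mu(w\cdot\lam)$ with $y$ outside the relevant order ideal, and each homogeneous component $f_k$ of such an $f$ is again such a homomorphism; hence $N(w)=\sum_f\sum_k\im f_k$ is spanned by homogeneous elements and is therefore a graded submodule. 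State the argument that way rather than via the false equality.

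The step you single out as the main obstacle is not really one. That $A_\lam^\mu$ is quasi-hereditary with weight poset ${}^IW^J$ ordered by (the appropriate direction of) the Bruhat order is classical, and $\Delta^\mu(w\cdot\lam)$ is characterised as the maximal quotient of $P^\mu(w\cdot\lam)$ whose composition factors $L(y\cdot\lam)$ satisfy the corresponding order constraint; this is precisely the trace-quotient description, so no extra verification beyond fixing the convention (dominant versus anti-dominant indexing) is needed. Finally, since $\mathbb{P}^\mu(w\cdot\lam)$ is by definition the graded projective cover of $\mathbb{L}(w\cdot\lam)$ placed in degree $0$ and $N(w)$ sits inside its graded radical, the graded head of your quotient is $\mathbb{L}(w\cdot\lam)$ in degree $0$, which gives the normalisation asserted in the lemma.
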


We refer the readers to \cite{CM}, \cite{M2}, \cite{M4}, \cite{MOS} (and the references therein) for important application of Koszul duality functors in the study of BGG category $\O$.

Let $\lam,\mu$ be two integral dominant weights. Recall that (\cite{BG}) a functor $\theta: \O_\lam\rightarrow\O_\mu$ is called projective if it is isomorphic to a direct summand of $V\otimes_{\C}-$ for some finite dimensional $\fg$-module $V$. By one of the main result in \cite{BG}, for any $W_\lam$-anti-dominant $\nu\in W\cdot\mu$ (i.e., $\nu\leq W_\lam\cdot\nu$), there is a unique indecomposable projective functor $\theta_{\lam,\nu}$ such that $\theta_{\lam,\nu}(\Delta(\lam))=P(\nu)$. Furthermore, every indecomposable projective functor from $\O_\lam$ to $\O_\mu$ is isomorphic to $\theta_{\lam,\nu}$ for some $W_\lam$-anti-dominant $\nu\in W\cdot\mu$.

Suppose that $\lam$ is a regular integral dominant weight. Then the indecomposable projective endofunctors of $\O_\lam$ are in bijection with the elements in $W$. For each $w\in W$, we denote by $\theta_w$ the unique indecomposable projective endofunctor of $\O_\lam$ such that $\theta_w(\Delta(\lam))=P(w\cdot\lam)$.

Let $w\in W$ and $s_1s_2\cdots s_r$ be a reduced expression of $w$, where $s_i\in S$ for each $i$. By \cite[Proposition 5.5]{M4}, $\theta_w$ is a direct summand of $\theta_{s_r}\cdots\theta_{s_2}\theta_{s_1}$, and all the other direct summands of are isomorphic to $\theta_x$ for $x<w$. By \cite{Str}, each $\theta_{s_i}$ is gradable. It follows that $\theta_w$ is gradable too.

\begin{definition} Let $\lam$ be a regular integral dominant weight. For each $w\in W$, we define $\bbtheta_w: \rgmod A_\lam\rightarrow\rgmod A_\lam$ to be the unique $\Z$-graded lift of $\theta_w$ such that
$\bbtheta_w(\mD(\lam))=\mP(w\cdot\lam)$.
\end{definition}

Let $v$ be an indeterminate over $\Z$ and $q:=v^2$. Let $\HH_v(W)$ be the Iwahori-Hecke algebra associated to $(W,S)$ with Hecke parameter $q$. By definition, $\HH_v(W)$ is a free $\Z[v,v^{-1}]$-module with standard basis $\{T_w|w\in W\}$ and multiplication given by $$
T_xT_y=T_{xy},\,\,\text{if $\ell(xy)=\ell(x)+\ell(y)$,}\quad T_s^2=(v-v^{-1})T_s+1,\,\,\forall\,s\in S .
$$
The algebra $\HH_v(W)$ is a deformation of the group algebra $\Z[W]$. There is a unique $\Z$-linear involution ``$-$'' on $\HH_v(W)$ which maps $v^k$ to $v^{-k}$ for all $k\in\Z$ and $T_w$ to $T_{w^{-1}}^{-1}$ for all $w\in W$. By a well-known result of Kazhdan and Lusztig \cite{KL}, $\HH_v(W)$ has a unique $\Z[v,v^{-1}]$-basis $\{C_w|w\in W\}$ such that \begin{enumerate}
\item[1)] for each $w\in W$, $\overline{C_w}=C_w$, and
\item[2)] we have $$
C_w=T_w+\sum_{w>y\in W}v^{\ell(y)-\ell(w)}P_{y,w}(v^2)T_y ,
$$
where $P_{y,w}$ is a polynomial in $q$ of degree $\leq (\ell(w)-\ell(y)-1)/2$, and $P_{w,w}:=1$.
\end{enumerate}
In particular, $C_w\in T_w+\sum_{w>y\in W}v^{-1}\Z[v^{-1}]T_y$. The polynomials $\{P_{y,w}|y\leq w\in W\}$ are the famous Kazhdan-Lusztig polynomials, while the element $C_w$ is the famous Kazhdan-Lusztig basis element of $\HH_v(W)$ (which was denoted by $C'_w$ in \cite{KL}).

Let $\lam$ be a regular integral dominant weight. For each $w\in W$ and $k\in\Z$, we define $v^k\bbtheta_w:=\bbtheta_w\<-k\>$. Let $\mathcal{P}$ be the additive category of graded projective endofunctors of $\O_\lam$ and $[\mathcal{P}]$ be its enriched split Grothendieck group, where the $\Z[v,v^{-1}]$-module structure on $[\mathcal{P}]$ is defined by $v^k[\bbtheta_w]:=[\bbtheta_w\<-k\>]$ for $k\in\Z$ and $w\in W$.

\begin{theorem}\text{(\cite[Theorem 7.11]{M4})} With the notations as above, the map which sends $\bbtheta_w$ to $C_w$ for each $w\in W$ can be extended uniquely to an anti-isomorphism of $\Z[v,v^{-1}]$-algebras between
$[\mathcal{P}]$ and the Hecke algebra $\HH_v(W)$.
\end{theorem}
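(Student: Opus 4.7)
The plan is to verify three standard categorification ingredients: that $\{[\bbtheta_w]\}_{w\in W}$ is a $\Z[v,v^{-1}]$-basis of $[\mathcal{P}]$, that the proposed map is thereby automatically well-defined and bijective, and that functor composition matches the Hecke algebra product in reversed order.

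First I would establish the basis statement. The set $\{\theta_w\}_{w\in W}$ classifies indecomposable projective endofunctors of $\O_\lam$ up to isomorphism by the Bernstein--Gelfand theory recalled above. A standard result (\cite[Theorem 3.3]{GG}) together with the normalization $\bbtheta_w(\mD(\lam))=\mP(w\cdot\lam)$ forces $\bbtheta_w$ to be the unique graded lift of $\theta_w$ up to grading shift, so the indecomposable objects of $\mathcal{P}$ are exactly the $\bbtheta_w\<k\>$ with $w\in W$ and $k\in\Z$. Hence $[\mathcal{P}]$ is $\Z[v,v^{-1}]$-free on $\{[\bbtheta_w]\}_{w\in W}$, matching the Kazhdan-Lusztig basis of $\HH_v(W)$, and the assignment $[\bbtheta_w]\mapsto C_w$ extends uniquely to a $\Z[v,v^{-1}]$-linear isomorphism $\Phi$.

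The substantive step is to prove that $\Phi$ is anti-multiplicative, i.e.\ $\Phi([\bbtheta_x\circ\bbtheta_y])=C_y\cdot C_x$. Since $\HH_v(W)$ is generated as a $\Z[v,v^{-1}]$-algebra by $\{C_s\mid s\in S\}$, associativity of composition together with a double induction on $\ell(x)$ and $\ell(y)$ reduces the problem to showing
$$
[\bbtheta_s\circ\bbtheta_w]=\Phi^{-1}(C_w\cdot C_s) \qquad \text{for all } w\in W,\ s\in S.
$$
The right-hand side is given by the two familiar KL recursions: $C_wC_s=(v+v^{-1})C_w$ if $ws<w$, and $C_wC_s=C_{ws}+\sum_{z<w,\,zs<z}\mu(z,w)C_z$ if $ws>w$. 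To obtain the matching direct-sum decomposition of $\bbtheta_s\circ\bbtheta_w$ I would evaluate at $\mD(\lam)$, which is faithful on $\mathcal{P}$ and converts summand decompositions of functors into summand decompositions of graded projective $A_\lam$-modules. The ungraded matching is exactly \cite[Proposition 5.5]{M4} combined with Soergel's combinatorial description of projective functors on $\O_\lam$. For the graded refinement I would use the Koszul grading on $A_\lam$ (Theorem \ref{koszul}) and the graded self-adjointness of $\bbtheta_s$ to see that the graded multiplicities lie in $\Z_{\geq 0}[v,v^{-1}]$, are bar-invariant, and specialize at $v=1$ to the known ungraded multiplicities; by the Kazhdan-Lusztig uniqueness theorem these three conditions force them to be the KL $\mu$-coefficients.

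The main obstacle is exactly this last graded matching: identifying the graded multiplicities of $\bbtheta_z$ inside $\bbtheta_s\circ\bbtheta_w$ with the $\mu(z,w)$. Everything else (basis identification, reduction to simple reflections, ungraded decomposition) is either formal or available in the literature; the Koszul structure of $A_\lam$ enters in an essential way here to lift positivity, bar-invariance, and classical recursion to the graded level. Once this matching is in hand, one reads off that $\Phi$ is an anti-isomorphism of $\Z[v,v^{-1}]$-algebras, completing the theorem.
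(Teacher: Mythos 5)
The paper states this theorem as a quotation of \cite[Theorem 7.11]{M4} and gives no proof of its own, so there is nothing internal to compare against; I will therefore judge your argument against the standard proof in the cited source. Your overall architecture is the right one and matches that proof: identify $\{[\bbtheta_w\<k\>]\}$ as the indecomposables so that $\{[\bbtheta_w]\}$ is a $\Z[v,v^{-1}]$-basis of $[\mathcal{P}]$, reduce anti-multiplicativity to the case of a simple reflection using that the $C_s$ generate $\HH_v(W)$, and detect the decomposition of $\bbtheta_s\circ\bbtheta_w$ by evaluating at $\mD(\lam)$, where projective functors are faithful. The reversal of order (composition $\bbtheta_x\circ\bbtheta_y\mapsto C_yC_x$) is also correctly accounted for.

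The genuine gap is in your final step. You claim that the graded multiplicity polynomials $m_z(v)=\sum_k[\bbtheta_s\circ\bbtheta_w:\bbtheta_z\<k\>]v^k$ are pinned down to the constants $\mu(z,w)$ by the three properties ``positive, bar-invariant, and equal to $\mu(z,w)$ at $v=1$,'' invoking the Kazhdan--Lusztig uniqueness theorem. This does not work: those three conditions do not determine a Laurent polynomial (for $\mu(z,w)=2$ both $2$ and $v+v^{-1}$ satisfy all three), and the KL uniqueness theorem characterizes the basis $\{C_w\}$ by bar-invariance together with the degree bounds on its coefficients in the $T$-basis --- it says nothing about multiplicities of summands. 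Moreover the bar-invariance of $m_z(v)$ is itself asserted rather than proved; self-adjointness of $\bbtheta_s$ does not immediately yield symmetry of the multiplicity polynomials, since $\mP(z\cdot\lam)\<k\>$ is not graded self-dual. The way to close the gap is to compute in the enriched Grothendieck group: identify $K_0(\rgmod A_\lam)$ with $\HH_v(W)$ via the graded Verma basis, use the graded short exact sequences $\mD(x\cdot\lam)\<-1\>\hookrightarrow\bbtheta_s\mD(x\cdot\lam)\twoheadrightarrow\mD(xs\cdot\lam)$ (and its companion for $xs<x$) to see that $[\bbtheta_s(-)]$ is right multiplication by $C_s$, and use the graded decomposition numbers of Theorem \ref{2cases} (equivalently graded BGG reciprocity) to identify $[\mP(w\cdot\lam)]$ with $C_w$. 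Since a graded projective module is determined up to isomorphism by its class in $K_0$, the equality $[\bbtheta_s\mP(w\cdot\lam)]=C_wC_s$ then forces the exact graded multiplicities, and the theorem follows. Without this Grothendieck-group computation the crucial identification is not established.
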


In other words, the above theorem says that the additive category of the graded projective endofunctors of $\O_\lam$ gives a categorification of the Iwahori-Hecke algebra $\HH_v(W)$. For more application of Kazhdan-Lusztig theory in the study of $\Z$-graded version of BGG category $\O$, see \cite{CM4}, \cite{M1} and \cite{M4}.

\section{$\Z$-graded decomposition numbers}

One of the central problem in the study of BGG category $\O$ is to determine the character of each simple module, or equivalently, to compute the composition multiplicity of each simple module in any given Verma module. This was formulated (\cite{KL}) as the following famous Kazhdan-Lusztig conjecture, which was proved by Beilinson and Bernstein \cite{BB0}, and by Brylinski and Kashiwara \cite{BK}.

\begin{theorem}[Kazhdan-Lusztig Conjecture] Let $\lam$ be an integral anti-dominant weight and $x\in W^\lam$. Then in the Grothendieck group of $\O_\lam$ we have $$\begin{aligned}
\left[ \Delta(x\cdot\lam) \right] & = \sum_{x>y\in W^\lam}P_{xw_0,yw_0}(1)\left[{L}(y\cdot\lam)\right] ,\\
\left[L(x\cdot\lambda)\right] &=\sum_{y\in W^J}\sum_{z\in W_J}(-1)^{\ell(x)+\ell(z)-\ell(y)}P_{yz,x}(1)\left[\Delta(y\cdot\lambda)\right].\end{aligned} $$
\end{theorem}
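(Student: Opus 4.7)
The plan is to follow the classical geometric proof of Beilinson--Bernstein \cite{BB0} and Brylinski--Kashiwara \cite{BK} via localization and the Riemann--Hilbert correspondence. First observe that the two displayed identities are mutually inverse expressions of the triangular bases $\{[\Delta(y\cdot\lam)]\}$ and $\{[L(y\cdot\lam)]\}$ of the Grothendieck group of $\O_\lam$ in terms of one another, so it suffices to prove the composition multiplicity formula $[\Delta(x\cdot\lam):L(y\cdot\lam)] = P_{xw_0,yw_0}(1)$ and then perform the standard Kazhdan--Lusztig matrix inversion to obtain the second identity. Next, reduce to the regular case $\lam = 0$ via the translation principle: translation to the wall $T_0^\lam$ sends $\Delta(x\cdot 0)\mapsto \Delta(x\cdot\lam)$ for $x\in W^\lam$ and annihilates $\Delta(y\cdot 0)$ for $y\notin W^\lam$, while translation out of the wall $T_\lam^0$ expresses $[L(x\cdot\lam)]$ as the alternating sum $\sum_{z\in W_J}(-1)^{\ell(z)}[L(xz\cdot 0)]$; this bookkeeping produces exactly the sums over $W^J$ and $W_J$ in the second formula, and the restriction $x,y\in W^\lam$ in the first.

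With $\lam = 0$ regular, apply the Beilinson--Bernstein localization theorem: global sections give an equivalence between the category of $B$-equivariant regular holonomic $\mathcal{D}$-modules on the flag variety $G/B$ and the principal block $\O_0$. Under this equivalence, up to a normalization shift, the Verma module $\Delta(w\cdot 0)$ corresponds to the standard shriek extension $j_{w,!}$ of the structure $\mathcal{D}$-module on the Schubert cell $X_w = BwB/B$, while the simple module $L(w\cdot 0)$ corresponds to the intersection cohomology $\mathcal{D}$-module $\mathcal{IC}_{\overline{X_w}}$ on the Schubert variety $\overline{X_w}$. Applying the de Rham functor yields an equivalence with the heart of perverse sheaves on $G/B$ constructible for the Schubert stratification, and the composition multiplicity $[\Delta(x\cdot 0):L(y\cdot 0)]$ becomes the Euler characteristic of the stalk of $\mathcal{IC}_{\overline{X_x}}$ at any point of the cell $X_y\subset\overline{X_x}$.

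The core geometric step, and the main obstacle, is the identification of these stalk Euler characteristics with $P_{y,x}(1)$, which is the original content of the Kazhdan--Lusztig conjecture at the level of stalks. The cleanest route is to invoke the purity/decomposition theorem of Beilinson--Bernstein--Deligne applied to a Bott--Samelson resolution $\pi \colon \widetilde{X}_{\underline{s}} \to \overline{X_x}$ attached to a reduced expression $\underline{s} = (s_1,\ldots,s_r)$ of $x$: purity forces $\pi_*\mathbb{Q}_{\widetilde{X}_{\underline{s}}}[\ell(x)]$ to split as a direct sum of shifted IC sheaves on Schubert subvarieties $\overline{X_y}$ with multiplicities concentrated in even degrees. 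The resulting recursion on stalks matches the defining recursion $C_x = C_{s_1}\cdots C_{s_r} - \sum_{y<x}\mu(y,x)C_y$ of the Kazhdan--Lusztig basis of $\HH_v(W)$, identifying stalk Poincar\'e polynomials with $P_{y,x}(q)$. Specializing at $q=1$ and applying the standard identity $P_{y,x} = P_{xw_0,yw_0}$ (coming from Poincar\'e duality between opposite Schubert cells, or equivalently from the anti-involution $T_w\mapsto T_{w_0w}$ on $\HH_v(W)$) delivers both formulas stated in the theorem.
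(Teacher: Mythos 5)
The paper does not actually prove this theorem: it is quoted as the Kazhdan--Lusztig conjecture with references to Beilinson--Bernstein and Brylinski--Kashiwara, so your sketch can only be measured against the standard argument in the literature. At the level of strategy (reduction to the regular case by translation, Beilinson--Bernstein localization, Riemann--Hilbert, and the decomposition theorem applied to Bott--Samelson resolutions) you have the right outline. However, two of your intermediate steps are wrong as stated. First, translation to the wall does \emph{not} annihilate $\Delta(y\cdot 0)$ for $y\notin W^\lam$: it is exact and sends \emph{every} Verma module $\Delta(y\cdot 0)$ to the wall Verma $\Delta(\overline{y}\cdot\lam)$ (this is exactly part 3) of Lemma \ref{lift0} in this paper); what it kills are the simples $L(y\cdot 0)$ with $y\notin W^\lam w_{0,\lam}$. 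Likewise $[L(x\cdot\lam)]$ cannot equal the alternating sum $\sum_{z\in W_J}(-1)^{\ell(z)}[L(xz\cdot 0)]$ --- these classes live in different blocks, and the class of any actual module has non-negative coefficients. The correct bookkeeping is to apply the exact functor $T_0^\lam$ to the regular-case expansion of $[L(xw_{0,\lam}\cdot 0)]$ in Verma classes and use $T_0^\lam L(xw_{0,\lam}\cdot 0)\cong L(x\cdot\lam)$, $T_0^\lam\Delta(yz\cdot 0)\cong\Delta(y\cdot\lam)$ for $z\in W_J$; that is where the inner sum over $W_J$ comes from.

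Second, the final step rests on the identity $P_{y,x}=P_{xw_0,yw_0}$, which is not a valid Kazhdan--Lusztig identity (the true symmetries are $P_{y,x}=P_{y^{-1},x^{-1}}$ and $P_{y,x}=P_{w_0yw_0,w_0xw_0}$; also $T_w\mapsto T_{w_0w}$ is not an anti-involution of $\HH_v(W)$). The underlying confusion is that the stalk Euler characteristics of $\mathcal{IC}_{\overline{X_x}}$ compute the transition matrix expressing $[\mathcal{IC}_{\overline{X_x}}]$ in terms of the classes $[j_{y!}\mathbb{Q}[\ell(y)]]$, i.e.\ simples in terms of standards, whereas the theorem asserts the \emph{inverse} expansion of a Verma class in simple classes. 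Passing from one to the other requires the inversion formula $\sum_{y\le z\le x}(-1)^{\ell(z)-\ell(y)}P_{y,z}P_{w_0x,w_0z}=\delta_{y,x}$ together with the dictionary matching $L(y\cdot\lam)$, for anti-dominant $\lam$, with the Schubert variety indexed by $yw_0$; these two ingredients, not a symmetry of the $P$'s, are what produce the $P_{xw_0,yw_0}$ in the statement. As written, your argument identifies a matrix with its inverse and then invokes a false identity to reconcile the labels, so the last paragraph needs to be reworked.
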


The above theorem is a shadow of a $\Z$-grading structure on $\O_\lam$ which is controlled by Kazhdan-Lusztig theory. It is better to understand the Kazhdan-Lusztig Conjecture in the framework of $\Z$-graded representation theory of $A_\lam$, which was worked out in \cite{BGS}.

Let $\lam,\mu$ be two integral anti-dominant weights and $x,y\in W^\lam$. Let $I,J$ be defined as in (\ref{ij1}). Recall that the $\Z$-graded projective module $\mP^\mu(x\cdot\lam)$ has a $\Z$-graded $\mD$-flag, i.e., a $\Z$-graded $A_\lam^\mu$-submodule filtration $0=P_0\subset P_1\subset\cdots\subset P_m=\mP^\mu(x\cdot\lam)$ such that each successive quotient is isomorphic to some $\mD^\mu(z\cdot\lam)\<k\>$ for some $z\in {}^IW^J$ and $k\in\Z$.

\begin{definition} We use $\bigl(\mP^\mu(x\cdot\lam):\mD^\mu(z\cdot\lam)\<k\>\bigr)$ to denote the multiplicity of $\mD^\mu(z\cdot\lam)\<k\>$ which occurs as a successive quotient in any given $\Z$-graded $\mD$-flag of $\mP^\mu(x\cdot\lam)$. Define $$
\bigl(\mP^\mu(x\cdot\lam):\mD^\mu(y\cdot\lam)\bigr)_v:=\sum_{k\in\Z}\bigl(\mP^\mu(x\cdot\lam):\mD^\mu(y\cdot\lam)\<k\>\bigr)v^k\in\N[v,v^{-1}],
$$
and the $\Z$-graded decomposition numbers $$\begin{aligned}
d^\mu_{x\cdot\lam,y\cdot\lam}(v)&:=\bigl[\mD^\mu(x\cdot\lam):\mL(y\cdot\lam)\bigr]_v\\
&=\sum_{k\in\Z}\bigl[\mD^\mu(x\cdot\lam):\mL(y\cdot\lam)\<k\>\bigr]v^k\in\N[v,v^{-1}] ,
\end{aligned}$$
where $\bigl[\mD^\mu(x\cdot\lam):\mL(y\cdot\lam)\<k\>\bigr]$ denotes the graded composition multiplicity of $\mL(y\cdot\lam)\<k\>$ in $\mD^\mu(x\cdot\lam)$.
\end{definition}

\begin{lemma}[Graded BGG reciprocity] With the notations as above, we have that $\bigl(\mP^\mu(x\cdot\lam):\mD^\mu(y\cdot\lam)\bigr)_v=\bigl[\mD^\mu(y\cdot\lam):\mL(x\cdot\lam)\bigr]_v$.
\end{lemma}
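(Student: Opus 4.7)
The plan is to adapt the classical BGG-reciprocity argument to the graded setting, using the degree-$0$ graded duality $d$ of Definition~\ref{duality0}. First set $\nabla^\mu(y\cdot\lam):=d(\mD^\mu(y\cdot\lam))$; since $d$ fixes each $\mL(z\cdot\lam)$ in degree $0$ and satisfies $d(M\langle k\rangle)\cong d(M)\langle -k\rangle$, the socle of $\nabla^\mu(y\cdot\lam)$ is $\mL(y\cdot\lam)$ in degree $0$. Combined with the normalization that $\mD^\mu(z\cdot\lam)$ has head $\mL(z\cdot\lam)$ in degree $0$, the unique (up to scalar) ungraded map $\Delta^\mu(z\cdot\lam)\to\nabla^\mu(z\cdot\lam)$ is automatically of degree $0$, yielding the key orthogonality
$$
\ho_{A_\lam^\mu}\bigl(\mD^\mu(z\cdot\lam),\nabla^\mu(y\cdot\lam)\langle j\rangle\bigr)=\delta_{z,y}\delta_{j,0}\,\C.
$$
The graded vanishing $\Ext^1_{\rgmod A_\lam^\mu}\bigl(\mD^\mu(z\cdot\lam),\nabla^\mu(y\cdot\lam)\langle j\rangle\bigr)=0$ for every $j\in\Z$ is then obtained from the standard ungraded identity $\Ext^1_{A_\lam^\mu}(\Delta^\mu(z\cdot\lam),\nabla^\mu(y\cdot\lam))=0$ (valid in any highest weight category) via the splitting $\Ext^1_{A}(\underline M,\underline N)=\bigoplus_{j\in\Z}\Ext^1_{\rgmod A}(M,N\langle j\rangle)$.

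Next, I compute $\dim\ho\bigl(\mP^\mu(x\cdot\lam),\nabla^\mu(y\cdot\lam)\langle k\rangle\bigr)$ in two different ways. On one hand, apply $\ho(-,\nabla^\mu(y\cdot\lam)\langle k\rangle)$ to the given graded $\mD$-flag of $\mP^\mu(x\cdot\lam)$: the graded $\Ext^1$-vanishing causes the associated long exact sequences to split, and the Hom-orthogonality collapses the total dimension to $\bigl(\mP^\mu(x\cdot\lam):\mD^\mu(y\cdot\lam)\langle k\rangle\bigr)$ (incidentally showing that this number is independent of the chosen $\mD$-flag). On the other hand, $\mP^\mu(x\cdot\lam)$ is the graded projective cover of $\mL(x\cdot\lam)$, so the exact functor $\ho(\mP^\mu(x\cdot\lam),-)$ counts graded composition factors in the sense $\dim\ho(\mP^\mu(x\cdot\lam),N)=[N:\mL(x\cdot\lam)\langle 0\rangle]$; therefore the same quantity equals $[\nabla^\mu(y\cdot\lam):\mL(x\cdot\lam)\langle -k\rangle]$ after a trivial shift of $N$.

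Finally, apply graded duality one more time: since $\nabla^\mu(y\cdot\lam)=d(\mD^\mu(y\cdot\lam))$ and $d(\mL(x\cdot\lam)\langle -k\rangle)=\mL(x\cdot\lam)\langle k\rangle$, composition multiplicities transform as $[\nabla^\mu(y\cdot\lam):\mL(x\cdot\lam)\langle -k\rangle]=[\mD^\mu(y\cdot\lam):\mL(x\cdot\lam)\langle k\rangle]$. Multiplying both computations of $\dim\ho(\mP^\mu(x\cdot\lam),\nabla^\mu(y\cdot\lam)\langle k\rangle)$ by $v^k$ and summing over $k\in\Z$ turns the first into $\bigl(\mP^\mu(x\cdot\lam):\mD^\mu(y\cdot\lam)\bigr)_v$ and the second into $\bigl[\mD^\mu(y\cdot\lam):\mL(x\cdot\lam)\bigr]_v$, giving the claimed equality. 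The only genuinely delicate point is the bookkeeping with degree shifts, but this is precisely what is set up by Corollary~\ref{idem0} and Definition~\ref{duality0}, so once those normalizations are fixed the argument is a routine graded translation of the classical BGG reciprocity proof.
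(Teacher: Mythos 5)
The paper states this lemma without proof (it is quoted as a known consequence of the graded quasi-hereditary structure on $A_\lam^\mu$ from \cite{BGS}), so there is no in-paper argument to compare against; your proposal supplies a complete and correct one, and it is exactly the standard homological proof transported to the graded setting. All the degree bookkeeping checks out: with $M^{\circledast}_k=\Hom_\C(M_{-k},\C)$ one indeed has $d(M\langle j\rangle)\cong d(M)\langle -j\rangle$, the unique map $\mD^\mu(y\cdot\lam)\to\nabla^\mu(y\cdot\lam)$ factors through the degree-$0$ head/socle and is therefore homogeneous of degree $0$, the graded $\Ext^1$-vanishing follows from the ungraded one via the decomposition of $\Ext^1$ over shifts, and the two computations of $\dim\ho(\mP^\mu(x\cdot\lam),\nabla^\mu(y\cdot\lam)\langle k\rangle)$ produce the two sides of the identity after summing against $v^k$. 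The one point you should make explicit is the availability of the graded duality you use: Definition \ref{duality0} appears only in Section 3 and constructs $\circledast$ on $\rgmod A_\lam$, not on the parabolic algebra $A_\lam^\mu$, whereas the lemma is stated (and used) for general $\mu$. So you should either invoke the general fact from \cite{BGS} that the Koszul grading on $A_\lam^\mu$ admits a graded lift of the duality $\vee$ of $\O_\lam^\mu$ which fixes each simple concentrated in degree $0$, or observe that the construction of $\circledast$ carries over verbatim to $A_\lam^\mu$; either way the logical order of the paper forces you to cite something outside Section 2 at this point. With that caveat addressed, the argument is sound, and as you note it also yields the independence of $\bigl(\mP^\mu(x\cdot\lam):\mD^\mu(y\cdot\lam)\langle k\rangle\bigr)$ from the chosen graded $\mD$-flag as a byproduct.
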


Let $w_{I}$ be the unique longest element in $W_I:=W_\mu$.
The following theorem gives the $\Z$-graded decomposition numbers of ${}^\Z\O_\lam^\mu$ in the case when either $I=\emptyset$ or $J=\emptyset$.

\begin{theorem}\text{(\cite[Theorem 3.11.4(i),(ii)]{BGS})}\label{2cases} Let $\lam,\mu$ be two integral anti-dominant weights and $x,y\in {}^IW^J$, where $I,J$ are defined as in (\ref{ij1}). \begin{enumerate}
\item[1)] if $I=\emptyset$, then in the enriched Grothendieck group $K_0(\rgmod A_{\lambda})$ we have $$
[\mathbb{\Delta}(x\cdot\lambda)]=\sum_{y\in W^J}v^{\ell(x)-\ell(y)}P_{xw_0,yw_0}(v^{-2})[\mathbb{L}(y\cdot\lambda)] .
$$
\item[2)] if $J=\emptyset$, then in the enriched Grothendieck group $K_0(\rgmod A_{0}^\mu)$ we have $$
[\mathbb{\Delta}^\mu(w_Ixw_0\cdot 0)]=\sum_{y\in ^{I}W}\sum_{z\in W_I}(-1)^{\ell(z)}v^{\ell(x)-\ell(y)}P_{zw_Ixw_0,w_Iyw_0}(v^{-2})[\mathbb{L}(w_Iyw_0\cdot 0)] .
$$
\end{enumerate}
\end{theorem}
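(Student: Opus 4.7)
The approach is to translate the computation of graded Verma composition multiplicities into a graded $\Ext$/$\Hom$ computation on the Koszul-dual side, where Koszul purity combined with the classical Kazhdan--Lusztig Conjecture recalled above will produce the answer with the correct grading shift. The key principle is Koszul purity: in a Koszul algebra, $\Ext^i$ between two graded simples is concentrated in internal degree $i$, so each integer contribution to an ungraded composition multiplicity is forced into a single internal degree. This is what converts the integer $P_{xw_0,yw_0}(1)$ into the Laurent polynomial $v^{\ell(x)-\ell(y)}P_{xw_0,yw_0}(v^{-2})$ once one tracks grading data.

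For part (1), with $I=\emptyset$, I would first pass to the dominant representative by replacing $\lam$ by $-w_0\lam$ so that Theorem \ref{koszul} applies, giving a Koszul duality equivalence $K:D^b(A_\lam\lgmod)\to D^b(A_0^{-w_0\lam}\lgmod)$. Under $K$, graded simples go to graded indecomposable injectives and graded indecomposable projectives go to graded simples, so by the graded BGG reciprocity stated in the preceding lemma the graded multiplicity $\bigl[\mD(x\cdot\lam):\mL(y\cdot\lam)\bigr]_v$ translates into the graded Poincar\'e series of $\Ext^\bullet$ between two simples on the dual side, with a global shift $v^{\ell(x)-\ell(y)}$ arising from the positions of head and socle. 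Soergel's identification of these $\Ext$-dimensions with coefficients of $P_{xw_0,yw_0}$ (which relies on the classical Kazhdan--Lusztig Conjecture) then yields the claimed formula; the $w_0$-twist is responsible for the passage from $P_{x,y}$ to $P_{xw_0,yw_0}$, and the substitution $q\mapsto v^{-2}$ reflects the convention $q=v^2$ together with the Koszul degree-reversal built into $K$.

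For part (2), with $J=\emptyset$, I would combine part (1) applied to the regular block $\lam=0$ with the standard expansion of a parabolic Verma class as a signed sum of ordinary Verma classes in the graded Grothendieck group:
\[
\bigl[\mD^\mu(w\cdot 0)\bigr]=\sum_{z\in W_I}(-1)^{\ell(z)}v^{-\ell(z)}\bigl[\mD(zw\cdot 0)\bigr].
\]
Substituting this into the regular-case formula and using the indexing shift $w\mapsto w_Ixw_0$ — which comes from tracking how $w_0$ and $w_I$ interact with the minimal right-coset representative convention — produces the stated double sum. The principal obstacle, in my view, is precisely this bookkeeping: the $w_0$-twist built into Koszul duality, the opposition between $\Delta$-filtrations and $\Ext$-filtrations, the conventions for $\mathcal{O}^\mu$ versus $\mathcal{O}_\mu$, and the signed alternating sum for parabolic Vermas all have to line up exactly to produce the indices $zw_Ixw_0$ and $w_Iyw_0$ with no stray factor of $v^{\pm 1}$ or sign error; this is the delicate combinatorial heart of the proof.
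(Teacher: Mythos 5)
The first thing to say is that the paper offers no proof of Theorem \ref{2cases} at all: it is quoted verbatim from \cite[Theorem 3.11.4(i),(ii)]{BGS}, followed only by a remark translating conventions. So you are reconstructing the BGS argument rather than competing with one in the text. Your sketch of part (1) is a fair summary of the standard mechanism: Koszulity forces the grading filtration of $\mD(x\cdot\lam)$ to coincide with its radical filtration, and Vogan's $\Ext$-form of the Kazhdan--Lusztig conjecture (parity vanishing of $\Ext^k(\Delta(x\cdot\lam),L(y\cdot\lam))$ plus the identification of its dimensions with KL coefficients) then pins each integer of $P_{xw_0,yw_0}(1)$ into a single internal degree. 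One quibble: ``graded BGG reciprocity'' is not the lemma doing this work; what is needed is purity of the relevant $\Ext$ groups, which is the geometric input behind Koszulity, not a formal consequence of the reciprocity statement.

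Part (2), however, has concrete gaps. First, your expansion $[\mD^\mu(w\cdot 0)]=\sum_{z\in W_I}(-1)^{\ell(z)}v^{-\ell(z)}[\mD(zw\cdot 0)]$ has the wrong shift under the paper's conventions ($M\<k\>_d=M_{d-k}$, $[\mL\<k\>]=v^k[\mL]$, and the positivity of Corollary \ref{pos2}): the graded BGG resolution places $\mD(zw\cdot 0)\<\ell(z)\>$ in homological degree $\ell(z)$, so the correct exponent is $+\ell(z)$. Already for $\mathfrak{sl}_2$ one has $[\mL(0)]=[\mD(0)]-v[\mD(s\cdot 0)]$, not $[\mD(0)]-v^{-1}[\mD(s\cdot 0)]$. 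Second, this graded Euler-characteristic identity is itself a nontrivial input (a graded lift of the BGG resolution, cf.\ \cite{M3}); it does not follow formally from Koszulity and cannot simply be asserted as standard. Third, there is a convention mismatch you cannot defer: part (1) is stated for anti-dominant $\lam$ and produces $P_{uw_0,y'w_0}$, whereas the block in part (2) is the dominant-regular $\O_0$; substituting part (1) literally yields $P_{zw_Ix,\,w_Iy}$ rather than the stated $P_{zw_Ixw_0,\,w_Iyw_0}$, so the ``bookkeeping'' is not merely tedious --- as written the substitution does not land on the asserted formula. Finally, note that the route most consonant with the paper's own framework is different: by Theorem \ref{koszul} the algebra $A_0^\mu$ of part (2) is the Koszul dual of the singular block algebra of part (1), so part (2) is obtained from part (1) by inverting the KL matrix, which is exactly where the alternating sum over $W_I$ comes from in \cite{BGS}; this avoids the graded BGG resolution entirely.
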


\begin{remark} To identify the formula 2) in the above theorem with \cite[Theorem 3.11.4(i),(ii)]{BGS}, one should note that \cite{BGS} used an integral dominant weight $\psi$ to index a parabolic subalgebra $\mathfrak{q}(\psi)$, while our convention in the above theorem is to use an integral anti-dominant weight $\mu$ to index a parabolic subalgebra $\frp_I$. Thus one should replace the elements $x^{-1}w_0, y^{-1}w_0, z$ in the notation of  \cite[Theorem 3.11.4(i),(ii)]{BGS} with the elements $w_0(w_Ixw_0)w_0, w_0(w_Iyw_0)w_0, w_0zw_0$ in the above theorem.
\end{remark}

\begin{corollary}\label{gradedlength1} Let $\lam,\mu$ be two integral anti-dominant weights and $x,y\in {}^IW^J$, where $I,J$ are defined as in (\ref{ij1}). Suppose that $I=\emptyset$. Then the graded length of $\mD(x\cdot\lam)$ is equal to $\ell(x)$.
\end{corollary}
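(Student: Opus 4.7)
The plan is to read the graded length off the formula in Theorem~\ref{2cases}(1). With $I=\emptyset$ we have
\[
[\mD(x\cdot\lam)] \;=\; \sum_{y\in W^J} v^{\ell(x)-\ell(y)}\, P_{xw_0,yw_0}(v^{-2})\,[\mL(y\cdot\lam)],
\]
so the graded shifts at which the simple composition factors of $\mD(x\cdot\lam)$ occur are exactly the integers $\ell(x)-\ell(y)-2j$, as $y$ runs over elements of $W^J$ with $y\le x$ and $j$ runs over $0\le j\le\deg P_{xw_0,yw_0}$.

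First I would establish the upper bound. Since $\ell(y)\ge 0$ and $j\ge 0$, every such shift is at most $\ell(x)$. The summand $y=x$ contributes only the shift $0$ (because $P_{xw_0,xw_0}=1$), recovering the head $\mL(x\cdot\lam)$ in degree~$0$; for every $y<x$ the Kazhdan--Lusztig degree bound $\deg P_{xw_0,yw_0}\le(\ell(x)-\ell(y)-1)/2$ forces each shift to lie in $[1,\ell(x)-\ell(y)]$. Hence the graded composition factors are supported in the interval $[0,\ell(x)]$.

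Next I would show that the bound $\ell(x)$ is attained. The identity element $e$ lies in $W^J$ and satisfies $e\le x$, so the summand indexed by $y=e$ is present and contributes $v^{\ell(x)}P_{xw_0,w_0}(v^{-2})[\mL(\lam)]$. The one nontrivial input is to pin down $P_{xw_0,w_0}(q)$: since the Schubert variety $X(w_0)$ equals the entire flag variety $G/B$, which is smooth, the Kazhdan--Lusztig polynomials $P_{u,w_0}(q)$ equal $1$ for every $u\le w_0$; in particular $P_{xw_0,w_0}(q)=1$. Thus the contribution from $y=e$ is exactly $v^{\ell(x)}[\mL(\lam)]$, and $\mL(\lam)$ appears in the graded composition series of $\mD(x\cdot\lam)$ with shift $\ell(x)$.

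Combining both parts, the extremal graded shifts of the composition factors are $0$ (the head) and $\ell(x)$ (a copy of the anti-dominant simple $\mL(\lam)$), giving graded length $\ell(x)$. The only genuinely substantive step is recognising $P_{xw_0,w_0}\equiv 1$ via the smoothness of $G/B$; everything else is bookkeeping with the character formula supplied by Theorem~\ref{2cases}(1).
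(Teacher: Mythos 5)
Your proposal is correct and follows exactly the paper's route: the paper's proof is the one-line observation that the claim ``follows from Theorem \ref{2cases} by calculating the maximal power of $v$ appeared in the coefficient,'' which is precisely your bookkeeping with the shifts $\ell(x)-\ell(y)-2j$. Your extra care in verifying attainment via $P_{xw_0,w_0}=1$ (a standard fact, provable either from smoothness of $G/B$ or directly from $C_{w_0}=\sum_{y}v^{\ell(y)-\ell(w_0)}T_y$) just fills in the detail the paper leaves implicit.
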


\begin{proof} This follows from Theorem \ref{2cases} by calculating the maximal power of $v$ appeared in the coefficient.
\end{proof}

Recently, Theorem \ref{2cases} was generalized by Xiao Wei and the author to the general case \cite{HuXiao}, where we also obtain explicit formulae for the inverse graded decomposition numbers in general cases.

\begin{theorem}\text{(\cite{HuXiao})} Let $\lam,\mu$ be two integral anti-dominant weights and $x,y\in {}^IW^J$, where $I,J$ are defined as in (\ref{ij1}). Then in the enriched Grothendieck group $K_0(\rgmod A_{\lambda}^\mu)$ we have $$\begin{aligned}
\bigl[\mathbb{\Delta}^\mu(w_Ix\cdot\lambda)\bigr] & =\sum_{y\in ^{I}W^J}\sum_{z\in W_I}(-1)^{\ell(z)}v^{\ell(x)-\ell(y)}P_{zw_Ixw_0,w_Iyw_0}(v^{-2})\bigl[\mathbb{L}(w_Iy\cdot\lambda)\bigr] ,\\
\bigl[{\mathbb{L}}(w_Iy\cdot\lambda)\bigr] & =\sum_{x\in ^{I}W^J}\sum_{z\in W_J}(-1)^{\ell(y)+\ell(z)-\ell(x)}v^{\ell(y)-\ell(x)}P_{w_Ixz,w_Iy}(v^{-2})
\bigl[{\mathbb{\Delta}}^\mu(w_Ix\cdot\lambda)\bigr] .
\end{aligned}$$
\end{theorem}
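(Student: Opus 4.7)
The plan is to reduce the general case $(I,J)$ to the extreme cases already settled in Theorem \ref{2cases}, via the graded translation functors constructed in Section four and the Koszul duality of Theorem \ref{koszul}.

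For the first formula I would combine two ingredients: (i) Theorem \ref{2cases}(2), which settles the case $\lam=0$, $J=\emptyset$; and (ii) the graded translation functor $\bbtheta^\mu_{0,\lam}:\rgmod A^\mu_0\to\rgmod A^\mu_\lam$ onto the $J$-wall, a graded lift of the classical translation functor constructed in Lemma \ref{lift1} and Theorem \ref{adj2}, whose action on graded parabolic Vermas, graded simples and graded projectives is recorded in Theorem \ref{4results}. Since $\bbtheta^\mu_{0,\lam}$ is exact and $\Z[v,v^{-1}]$-linear on enriched Grothendieck groups, applying it to the identity of Theorem \ref{2cases}(2) yields an identity in $K_0(\rgmod A^\mu_\lam)$. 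The key computation is then the action of $\bbtheta^\mu_{0,\lam}$ on graded parabolic Vermas $\mathbb{\Delta}^\mu(w_Ixw_0\cdot 0)$ and graded simples $\mathbb{L}(w_Iyw_0\cdot 0)$ for $x,y\in {}^IW$. By the graded analogue of the classical wall-killing phenomenon, $\bbtheta^\mu_{0,\lam}$ vanishes on those simples indexed by $y\in {}^IW\setminus{}^IW^J$ and on the remaining simples produces $\mathbb{L}(w_Iy\cdot\lam)$ with no grading shift; on parabolic Vermas it sends $\mathbb{\Delta}^\mu(w_Ixw_0\cdot 0)$ to the parabolic Verma whose highest weight is the ${}^IW^J$-representative of the $W_J$-orbit of $w_Ixw_0\cdot\lam$. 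Collecting surviving terms in the transported identity yields the first formula, the Kazhdan-Lusztig coefficients $v^{\ell(x)-\ell(y)}P_{zw_Ixw_0,w_Iyw_0}(v^{-2})$ being preserved because the indexing elements $zw_Ixw_0$ and $w_Iyw_0$ remain the labels in the $\lam$-block.

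For the second (inverse) formula I would argue dually via the Koszul duality of Theorem \ref{koszul}. The equivalence $K:D^b(A_\lam^\mu\lgmod)\cong D^b(A^\lam_{-w_0\mu}\lgmod)$ swaps the roles of $(I,J)$ and sends graded indecomposable projectives to graded simples and graded simples to graded indecomposable injectives. Combined with graded BGG reciprocity, this identifies the graded inverse decomposition matrix of $A^\mu_\lam$, after the involution $v\mapsto v^{-1}$ and the relabeling $w\mapsto w_0ww_0$, with the graded direct decomposition matrix of the Koszul dual $A^\lam_{-w_0\mu}$. Applying the first formula to the latter and transporting back through $K$ then produces the second formula. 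Alternatively one may invert the unitriangular transition matrix of the first formula directly via the well-known Kazhdan-Lusztig inversion identity.

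The principal obstacle is pinning down the action of $\bbtheta^\mu_{0,\lam}$ on generators in the graded setting with correct shifts, and in particular the vanishing on $\mathbb{L}(w_Iyw_0\cdot 0)$ for $y\in {}^IW\setminus {}^IW^J$. Section four of the present paper is designed precisely to supply these identifications. Once they are in hand, the remainder of the argument is combinatorial bookkeeping, guided by the length identities $\ell(zw_Ix)=\ell(w_I)-\ell(z)+\ell(x)$ and $\ell(w_Iy)=\ell(w_I)+\ell(y)$ for $z\in W_I$, $x,y\in {}^IW$, and by the bijection between ${}^IW^J$ and the set of surviving labels under translation.
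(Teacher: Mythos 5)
First, a point of comparison: the paper does not prove this theorem at all --- it is imported wholesale from \cite{HuXiao} --- so there is no internal proof to measure your argument against. Judged on its own, your strategy (transport Theorem \ref{2cases}(2) from the regular parabolic block to the singular one via a graded translation functor, then get the inverse formula by Koszul duality or Kazhdan--Lusztig inversion) is the natural one, but the outline leaves the actual content of the theorem unproved.

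The crux you defer is precisely where the theorem lives. Lemma \ref{lift1} and Theorem \ref{4results} construct and compute the graded translation functors only for $I=\emptyset$; nothing in the paper gives the effect of $\bbtheta_\lam^{\rm on}$ on a graded \emph{parabolic} Verma $\mD^\mu(w\cdot 0)$. Already ungraded, $[\theta_\lam^{\rm on}\Delta^\mu(w\cdot 0)]=\sum_{z\in W_I}(-1)^{\ell(z)}[\Delta(\overline{zw}\cdot\lam)]$ is an alternating sum that can cancel to zero, and the set ${}^IW^J$ of (\ref{ij}) --- a possibly proper subset of ${}^IW\cap W^J$ --- is exactly the device recording when it does not; establishing this vanishing pattern, graded, is the main work. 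Your relabeling claims are also not formalities: $\theta_\lam^{\rm on}L(w_Iyw_0\cdot 0)$, when nonzero, is the simple of highest weight $w_Iyw_0\cdot\lam$, whose distinguished representative is not $w_Iy$ by mere inspection (that would force $w_0\in W_J$), so the asserted persistence of the indices $P_{zw_Ixw_0,w_Iyw_0}$ under the change of block requires a genuine combinatorial argument. You also assert the surviving simples carry no grading shift, which contradicts Theorem \ref{4results} ($\bbtheta_\lam^{\rm on}(\mL(xw_{0,\lam}\cdot 0))\cong\mL(x\cdot\lam)\<-\ell(w_{0,\lam})\>$); this happens to be harmless since the Verma on the left acquires the same shift, but it signals that the shifts have not actually been tracked.

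For the second formula, the Koszul-duality route is sound in principle (numerical Koszul duality does identify the inverse graded decomposition matrix of $A_\lam^\mu$ with the graded decomposition matrix of $A_{-w_0\mu}^{\lam}$ up to $v\mapsto v^{-1}$ and relabeling), but the alternative you offer --- ``invert the unitriangular matrix via the well-known KL inversion identity'' --- is not routine: the classical inversion formula runs over all of $W$, whereas here one must invert a submatrix indexed by ${}^IW^J$ whose entries are themselves alternating $W_I$-sums, and show the answer is the stated alternating $W_J$-sum. That identity is a theorem in its own right, not bookkeeping.
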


\begin{corollary}[Positivity] Let $\lam,\mu$ be two integral anti-dominant weights and $x,y\in {}^IW^J$. Then for any $k\in\Z$, $\bigl[\mathbb{\Delta}^\mu(w_Ix\cdot\lambda):\mathbb{L}(w_Iy\cdot\lambda)\<k\>\bigr]\neq 0$ only if
$k\geq 0$ and $x\geq y$. Moreover, $\bigl[\mathbb{\Delta}^\mu(w_Ix\cdot\lambda):\mathbb{L}(w_Iy\cdot\lambda)\bigr]\neq 0$ only if $x=y$, and in this case $\bigl[\mathbb{\Delta}^\mu(w_Ix\cdot\lambda):\mathbb{L}(w_Ix\cdot\lambda)\bigr]=1$.
\end{corollary}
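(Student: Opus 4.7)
The plan is to derive all three assertions directly from the explicit formula
\begin{equation*}
\bigl[\mathbb{\Delta}^\mu(w_Ix\cdot\lambda):\mathbb{L}(w_Iy\cdot\lambda)\bigr]_v = v^{\ell(x)-\ell(y)}\sum_{z\in W_I}(-1)^{\ell(z)}P_{zw_Ixw_0,w_Iyw_0}(v^{-2})
\end{equation*}
supplied by the preceding theorem, together with the Koszul structure of $A_\lam^\mu$ coming from Theorem \ref{koszul}. The left-hand side is by definition a polynomial in $v$ with non-negative integer coefficients, so the assertions concern its support and its constant coefficient.

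For the grading bound $k\geq 0$, I would first invoke that $A_\lam^\mu$ is Koszul, hence non-negatively graded with semisimple degree-zero part. The graded projective cover $\mathbb{P}^\mu(w_Ix\cdot\lam)$ is cyclic and generated in degree $0$, so it lies entirely in non-negative degrees; since $\mathbb{\Delta}^\mu(w_Ix\cdot\lam)$ is a graded quotient of $\mathbb{P}^\mu(w_Ix\cdot\lam)$, it too is concentrated in non-negative degrees, and the graded multiplicity in any degree $k<0$ vanishes automatically. The normalization at $k=0$ follows from the same Koszul picture: the graded Jacobson radical of $A_\lam^\mu$ equals $\bigoplus_{d>0}(A_\lam^\mu)_d$, so the degree-zero component of any graded module coincides with the degree-zero component of its head. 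The head of $\mathbb{\Delta}^\mu(w_Ix\cdot\lam)$ is $\mathbb{L}(w_Ix\cdot\lam)$, which sits in degree zero, whence $\bigl[\mathbb{\Delta}^\mu(w_Ix\cdot\lam):\mathbb{L}(w_Iy\cdot\lam)\bigr]=\delta_{x,y}$.

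For the support condition $x\geq y$, I would argue directly from the formula. A non-zero multiplicity on the left forces $P_{zw_Ixw_0,w_Iyw_0}\ne 0$ for at least one $z\in W_I$, i.e., $zw_Ixw_0\leq w_Iyw_0$ in Bruhat order. Since right multiplication by $w_0$ reverses Bruhat order, this rewrites as $zw_Ix\geq w_Iy$; as $z$ varies over $W_I$, the product $zw_I$ sweeps out $W_I$, so $zw_Ix$ ranges over the whole left coset $W_Ix$. Hence the condition becomes the existence of some $u\in W_Ix$ with $w_Iy\leq u$, which by transitivity reduces to $w_Iy\leq w_Ix$ (the unique longest element of $W_Ix$, using $x\in {}^IW$). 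The standard fact that the bijection $x\mapsto w_Ix$ from $^IW$ to the set of longest left coset representatives of $W_I$ is order-preserving then yields $y\leq x$.

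The main obstacle is this last coset-theoretic step: carefully tracking the Bruhat inequalities under left and right multiplication by $w_0$, $w_I$, and the elements of $W_I$, and verifying that the bijection between shortest and longest left coset representatives is indeed order-preserving (a standard fact, but one that must be cited or proved explicitly). All other ingredients are routine consequences of the positive-grading/Koszul framework, so that once the Bruhat manipulations are in place the corollary drops out of the explicit formula.
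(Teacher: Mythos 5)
Your proposal is correct. The paper states this corollary without proof, as an immediate consequence of the preceding theorem of Hu--Xiao, and your argument is a legitimate way to fill in the details: Koszul non-negativity of the grading gives $k\ge 0$ and the normalization at $k=0$ via the head, while the Bruhat manipulations ($P_{u,v}\neq 0$ iff $u\le v$, order reversal under right multiplication by $w_0$, and $u\le w_Ix$ for every $u\in W_Ix$ when $x\in{}^IW$) give $x\ge y$. The only point to tighten is the last step: from $w_Iy\le w_Ix$ you need the \emph{order-reflecting} direction, which follows from the standard fact that the projection of $W$ onto minimal coset representatives in ${}^IW$ is order-preserving (equivalently, $x\mapsto w_Ix$ is an order isomorphism onto the maximal coset representatives), so this is fine once cited precisely.
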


\begin{definition} Let $\lam,\mu$ be two integral anti-dominant weights and $x,y\in {}^IW^J$. We define the graded Cartan number $$
c^\mu_{x\cdot\lam,y\cdot\lam}(v):=\sum_{k\in\Z}\bigl[\mP^\mu(x\cdot\lam):\mL(y\cdot\lam)\<k\>\bigr]v^k\in\N[v,v^{-1}] .
$$
\end{definition}

By the graded BGG reciprocity, we have that $$
c^\mu_{x\cdot\lam,y\cdot\lam}(v)=\sum_{z\in {}^IW^J}d^\mu_{z\cdot\lam,x\cdot\lam}(v)d^\mu_{z\cdot\lam,y\cdot\lam}(v)\in\N[v,v^{-1}].
$$
In particular, the graded Cartan matrix $(c^\mu_{x\cdot\lam,y\cdot\lam}(v))_{x,y}$ is symmetric.

\begin{corollary}\label{gradedlength2} Let $\lam,\mu$ be two integral anti-dominant weights and $x,y\in {}^IW^J$, where $I,J$ are defined as in (\ref{ij1}). Suppose that $I=\emptyset$. Then the graded length of $\mP(x\cdot\lam)$ is equal to $2(\ell(w_0)-\ell(w_{0,\lam}))-\ell(x)$.
\end{corollary}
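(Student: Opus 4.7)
The plan is to deduce the graded length of $\mP(x\cdot\lam)$ from graded BGG reciprocity applied to the explicit formula of Theorem \ref{2cases}(1), combined with the graded length of Vermas supplied by Corollary \ref{gradedlength1}.

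First, by the graded BGG reciprocity together with the case $I=\emptyset$ of Theorem \ref{2cases}(1), the $\mD$-filtration multiplicities of $\mP(x\cdot\lam)$ are given by
$$\bigl(\mP(x\cdot\lam):\mD(y\cdot\lam)\bigr)_v \;=\; d_{y\cdot\lam,\,x\cdot\lam}(v) \;=\; v^{\ell(y)-\ell(x)}\,P_{yw_0,\,xw_0}(v^{-2}),$$
for $y\in W^\lam$. This polynomial is non-zero precisely when $y\geq x$, since $P_{yw_0,xw_0}\neq 0$ iff $yw_0\leq xw_0$ iff $x\leq y$. Using that every Kazhdan--Lusztig polynomial has non-negative integer coefficients with constant term $1$, the largest power of $v$ appearing in $d_{y\cdot\lam,x\cdot\lam}(v)$ is $\ell(y)-\ell(x)$, attained with coefficient $1$.

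Next I would invoke Corollary \ref{gradedlength1}, which says that $\mD(y\cdot\lam)$ is concentrated in degrees $[0,\ell(y)]$ with non-zero pieces at both ends (the head $\mL(y\cdot\lam)$ in degree $0$ and $\mL(e\cdot\lam)\<\ell(y)\>$ visible in the $z=e$ summand of Theorem \ref{2cases}(1)). Hence every summand $\mD(y\cdot\lam)\<k\>$ occurring in a graded $\mD$-flag of $\mP(x\cdot\lam)$ occupies degrees $[k,k+\ell(y)]$ and satisfies $k+\ell(y)\leq 2\ell(y)-\ell(x)$. Maximising over $y\geq x$ in $W^\lam$, the longest element of $W^\lam$ is $w^\lam:=w_0 w_{0,\lam}$ of length $\ell(w_0)-\ell(w_{0,\lam})$, and every element of $W^\lam$ is $\leq w^\lam$ in Bruhat order. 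This gives the upper bound $2(\ell(w_0)-\ell(w_{0,\lam}))-\ell(x)$ for the top non-zero degree of $\mP(x\cdot\lam)$. The bound is attained: the summand $y=w^\lam$, $k=\ell(w^\lam)-\ell(x)$ appears with multiplicity exactly $1$ in the $\mD$-flag (from the constant term of $P_{w^\lam w_0,xw_0}$), and its bottom composition factor $\mL(e\cdot\lam)\<2\ell(w^\lam)-\ell(x)\>$ is a genuine composition factor of $\mP(x\cdot\lam)$.

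Finally, the minimum non-zero degree of $\mP(x\cdot\lam)$ is $0$: the graded projective cover has head $\mL(x\cdot\lam)$ in degree $0$, and since $A_\lam$ is Koszul (hence non-negatively graded), $\mP(x\cdot\lam)$ is supported in non-negative degrees. Combining the two extreme degrees yields the graded length $2(\ell(w_0)-\ell(w_{0,\lam}))-\ell(x)$. The main technical point to monitor is that the top-degree composition multiplicity is not cancelled by contributions from other summands in the $\mD$-flag; this is ensured by Kazhdan--Lusztig positivity, which makes every coefficient of the graded Cartan polynomials non-negative so that the unique $y=w^\lam,\,z=e$ contribution cannot be erased.
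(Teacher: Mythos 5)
Your argument is correct and is essentially the paper's own proof written out in detail: expanding the graded Cartan polynomial $c_{x\cdot\lam,y\cdot\lam}(v)=\sum_{z}d_{z\cdot\lam,x\cdot\lam}(v)\,d_{z\cdot\lam,y\cdot\lam}(v)$ via graded BGG reciprocity and Theorem \ref{2cases} is exactly what you do when you stack the shifted Verma layers whose graded lengths come from Corollary \ref{gradedlength1}, then read off the extreme degrees. (Two cosmetic quibbles: the ``$z=e$ summand'' should read $y=e$, and no appeal to Kazhdan--Lusztig positivity is needed to exclude cancellation, since graded composition multiplicities are non-negative by definition.)
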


\begin{proof} This follows from the above formula for $c^\mu_{x\cdot\lam,y\cdot\lam}(v)$ and Theorem \ref{2cases}.
\end{proof}

\begin{corollary}[Positivity]\label{pos2} Let $\lam,\mu$ be two integral anti-dominant weights and $x,y\in {}^IW^J$. Then for any $k\in\Z$, $\bigl[\mathbb{P}^\mu(w_Ix\cdot\lambda):\mathbb{L}(w_Iy\cdot\lambda)\<k\>\bigr]\neq 0$ only if
$k\geq 0$. Moreover, $\bigl[\mathbb{P}^\mu(w_Ix\cdot\lambda):\mathbb{L}(w_Iy\cdot\lambda)\bigr]\neq 0$ only if $x=y$, and in this case $\bigl[\mathbb{P}^\mu(w_Ix\cdot\lambda):\mathbb{L}(w_Ix\cdot\lambda)\bigr]=1$.
\end{corollary}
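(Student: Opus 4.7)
The plan is to deduce both parts of the corollary as immediate consequences of the graded BGG reciprocity identity
$$c^\mu_{x\cdot\lam,y\cdot\lam}(v)=\sum_{z\in {}^IW^J}d^\mu_{z\cdot\lam,x\cdot\lam}(v)\,d^\mu_{z\cdot\lam,y\cdot\lam}(v)$$
displayed just above the statement, combined with the Verma-side positivity already established in the preceding Positivity corollary.

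For the first assertion, I would first observe that the preceding Positivity corollary implies each graded decomposition polynomial $d^\mu_{z\cdot\lam,x\cdot\lam}(v)$ actually lies in $\N[v]$, with no negative powers of $v$ occurring. Since $\N[v]$ is closed under products and sums, the right-hand side of the displayed identity lies in $\N[v]$, and therefore so does $c^\mu_{x\cdot\lam,y\cdot\lam}(v)$. This is precisely the statement that $\bigl[\mathbb{P}^\mu(w_Ix\cdot\lambda):\mathbb{L}(w_Iy\cdot\lambda)\<k\>\bigr]=0$ whenever $k<0$.

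For the second assertion, I would extract the constant term of both sides of the displayed identity. Again by the Positivity corollary, the constant term of each factor $d^\mu_{z\cdot\lam,w\cdot\lam}(v)$ equals $\delta_{z,w}$ (the corollary says it is nonzero only if $z=w$, and equals $1$ in that case). Hence the constant term of the $z$-summand on the right-hand side is $\delta_{z,x}\delta_{z,y}$, and summing over $z\in{}^IW^J$ gives constant term $\delta_{x,y}$. Thus $\bigl[\mathbb{P}^\mu(w_Ix\cdot\lambda):\mathbb{L}(w_Iy\cdot\lambda)\bigr]=\delta_{x,y}$, which is the remaining claim.

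There is no real obstacle here: the argument is a purely formal manipulation of the BGG reciprocity formula, and positivity of the Cartan numbers follows from positivity of the Verma decomposition numbers simply because $\N[v]$ is closed under the relevant operations. The only thing worth pausing on is the observation that the Verma positivity excludes negative powers of $v$ in each $d^\mu_{z\cdot\lam,w\cdot\lam}(v)$, which is what prevents any cancellation in the sum over $z$ and makes both conclusions clean.
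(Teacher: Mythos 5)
Your argument is correct and is exactly the (implicit) derivation the paper intends: the corollary is stated immediately after the graded BGG reciprocity identity for $c^\mu_{x\cdot\lam,y\cdot\lam}(v)$, and both claims follow by reading off that each factor $d^\mu_{z\cdot\lam,w\cdot\lam}(v)$ lies in $\N[v]$ with constant term $\delta_{z,w}$, as you do. The only point to be tidy about is matching the paper's two labelling conventions (the reciprocity formula is written with subscripts $x\cdot\lam$ while the corollary uses $w_Ix\cdot\lam$ for $x\in{}^IW^J$), but this is a notational matter and does not affect the argument.
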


\begin{corollary}\label{p2p} Let $\lam,\mu$ be two integral anti-dominant weights and $x,y\in {}^IW^J$. Let $f\in\hom_{A_\lam^\mu}(\mP^\mu(x\cdot\lam),\mP^\mu(y\cdot\lam))$ be a homogeneous homomorphism. Then $\deg f\geq 0$. Moreover $\deg f=0$ if and only if $x=y$ and $f\in\mathbb{C}\id_{\mP^\mu(x\cdot\lam)}$.
\end{corollary}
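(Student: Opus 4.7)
The plan is to deduce Corollary \ref{p2p} directly from the positivity statement of Corollary \ref{pos2} by translating graded Hom data between graded projectives into graded composition multiplicity data. This is a formal consequence of the fact that $\mP^\mu(x\cdot\lam)$ is the graded projective cover of $\mL(x\cdot\lam)$, whose graded simple head sits in degree zero.

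First I would compute the dimension of the space of degree-$d$ homogeneous homomorphisms. Since $\mP^\mu(x\cdot\lam)$ is graded projective, the functor $\ho_{A_\lam^\mu}(\mP^\mu(x\cdot\lam),-)$ is exact on $\rgmod A_\lam^\mu$, and it vanishes on every graded simple $\mL(z\cdot\lam)\<k\>$ except on $\mL(x\cdot\lam)=\mL(x\cdot\lam)\<0\>$, where it is one-dimensional. Running this observation through a graded composition series of $\mP^\mu(y\cdot\lam)\<-d\>$ and using the convention that a degree-$d$ homogeneous map from $M$ to $N$ is an element of $\ho_{A_\lam^\mu}(M,N\<-d\>)$, I obtain
$$
\dim_\C\bigl\{g\in\hom_{A_\lam^\mu}\bigl(\mP^\mu(x\cdot\lam),\mP^\mu(y\cdot\lam)\bigr)\bigm|\deg g=d\bigr\}=\bigl[\mP^\mu(y\cdot\lam):\mL(x\cdot\lam)\<d\>\bigr].
$$

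Next I would read off both conclusions from Corollary \ref{pos2}. Its first half forces the right-hand side to vanish whenever $d<0$, which gives $\deg f\geq 0$ for every nonzero homogeneous $f$. Its second half says that the multiplicity at $d=0$ vanishes unless $x=y$, in which case it equals $1$. Since $\id_{\mP^\mu(x\cdot\lam)}$ is a nonzero degree-$0$ endomorphism when $x=y$, it must span this one-dimensional space, yielding $f\in\C\id_{\mP^\mu(x\cdot\lam)}$.

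I do not expect any genuine obstacle: the argument is a routine translation between graded Hom spaces and graded composition multiplicities, using only graded projectivity and the degree-$0$ concentration of the head of $\mP^\mu(x\cdot\lam)$. The only point requiring care is the grading-shift bookkeeping and the identification of which parabolic projective carries which label; note that the symmetry of the graded Cartan matrix mentioned after Corollary \ref{gradedlength2} means Corollary \ref{pos2} can be invoked with the roles of $x$ and $y$ swapped, so there is no essential asymmetry to worry about.
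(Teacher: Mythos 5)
Your proposal is correct and is exactly the argument the paper intends: the paper's proof consists of the single line ``this is a direct consequence of Corollary \ref{pos2},'' and your identification $\dim\ho_{A_\lam^\mu}(\mP^\mu(x\cdot\lam),\mP^\mu(y\cdot\lam)\<-d\>)=[\mP^\mu(y\cdot\lam):\mL(x\cdot\lam)\<d\>]$ via graded projectivity and the degree-$0$ head is precisely the omitted reduction. The bookkeeping, including the harmless relabeling by $w_I$ between Corollaries \ref{pos2} and \ref{p2p}, is handled correctly.
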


\begin{proof} This is a direct consequence of Corollary \ref{pos2}.
\end{proof}

\begin{lemma}\text{(\cite[Erweiterungssatz 5]{So})} The isomorphism in Lemma \ref{c0} can be upgraded into a $\Z$-graded algebra isomorphism between $C_\lam$ and $\End_{\O}(P(w_0\cdot\lam))$.
\end{lemma}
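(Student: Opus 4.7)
The plan is to endow $\End_\O(P(w_0\cdot\lam))$ with a natural $\Z$-grading and then verify that Soergel's isomorphism $\theta$ of Lemma \ref{c0} respects it. The grading on $C_\lam = C^{W_\lam}$ is induced from that of $S(\fh^*)$ (where $\fh^*$ sits in degree two), which descends because $I_W$ is homogeneous. The grading on the target comes from Theorem \ref{koszul} with $\mu=0$: the basic algebra $A_\lam$ is Koszul, hence carries a canonical positive $\Z$-grading; the primitive idempotent $e_{w_0}$ corresponding to the indecomposable summand $P(w_0\cdot\lam)$ of $P_\lam$ is homogeneous of degree zero, and the corner $e_{w_0}A_\lam e_{w_0}\cong\End_\O(P(w_0\cdot\lam))$ inherits the induced grading.

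To match these two gradings I would use the combinatorial $\mathbb{V}$-functor. By Definition and Theorem \ref{ff}, $\End_\O(P(w_0\cdot\lam))\cong \End_{C_\lam}(\mathbb{V}_\lam(P(w_0\cdot\lam)))$. Corollary \ref{bs} together with Theorem \ref{BS0} equips $\mathbb{V}_\lam(P(w_0\cdot\lam))$ with a graded structure as the distinguished direct summand of the Bott-Samelson module \eqref{BS} for a reduced expression of $w_0$. Normalising the graded lift $\mathbb{P}(w_0\cdot\lam)$ so that its graded head $\mathbb{L}(w_0\cdot\lam)$ lies in degree zero pins down this grading uniquely; since $\mathbb{V}_\lam(P(w_0\cdot\lam))$ is ungraded-isomorphic to the free rank one $C_\lam$-module via Lemma \ref{c0}, it must be graded-isomorphic to $C_\lam\<-N\>$ for some uniform shift $N$. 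Its graded $C_\lam$-endomorphism algebra is therefore $C_\lam$ as a graded algebra.

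The action of $\End_\O(P(w_0\cdot\lam))$ on $\mathbb{V}_\lam(P(w_0\cdot\lam))=\Hom_\O(P(w_0\cdot\lam),P(w_0\cdot\lam))$ is by post-composition, which coincides with multiplication in the algebra. Hence the composite
\[
C_\lam\xrightarrow{\theta}\End_\O(P(w_0\cdot\lam))\xrightarrow{\sim}\End_{C_\lam}(C_\lam\<-N\>)=C_\lam
\]
is the identity. Since the right-hand map is a graded isomorphism by the preceding paragraph, $\theta$ must itself be degree-preserving.

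The main obstacle will be pinning down the grading normalisation so that no parasitic shift survives; concretely, one must verify that the Bott-Samelson realisation in Theorem \ref{BS0} produces $\mathbb{V}_\lam(P(w_0\cdot\lam))$ with the same grading convention as the normalised lift of $P(w_0\cdot\lam)$. Once this bookkeeping is in place, the upgrade of Lemma \ref{c0} to a graded algebra isomorphism follows. As an independent sanity check, one could compare graded Hilbert series: the graded dimension of $C_\lam$ is the parabolic Poincar\'e polynomial, while Corollary \ref{gradedlength2}, graded BGG reciprocity, and Theorem \ref{2cases}(1) determine $\dim_v\End_\O(P(w_0\cdot\lam))$ and yield the same polynomial, confirming that $\theta$ admits a (necessarily unique) homogeneous lift.
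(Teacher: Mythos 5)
The paper offers no proof of this lemma: it is quoted verbatim from Soergel \cite{So} (the Erweiterungssatz), so there is no internal argument to compare yours against, and your proposal must stand on its own. It does not, because there is a circularity at the decisive step. You set up two gradings --- the polynomial grading on $C_\lam$ and the Koszul corner grading on $e_{w_0}A_\lam e_{w_0}\cong\End_{\O}(P(w_0\cdot\lam))$ --- and you factor $\theta$ as
$C_\lam\to\End_{\O}(P(w_0\cdot\lam))\to\End_{C_\lam}(\mathbb{V}_\lam P(w_0\cdot\lam))\cong C_\lam$,
correctly observing that the composite is the identity. You then assert that the middle map (the Struktursatz isomorphism $\phi\mapsto\mathbb{V}_\lam(\phi)$) is a graded isomorphism ``by the preceding paragraph''. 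But that paragraph only establishes that $\mathbb{V}_\lam P(w_0\cdot\lam)$ with its Bott--Samelson grading is isomorphic to $C_\lam\<-N\>$ and that $\End_{C_\lam}(C_\lam\<-N\>)=C_\lam$ as graded algebras; it says nothing about whether an element of Koszul degree $d$ in $e_{w_0}A_\lam e_{w_0}$ is sent to an endomorphism of polynomial degree $d$. Since the composite is the identity, the assertion ``the middle map is graded'' is logically equivalent to ``$\theta$ is graded'', i.e.\ to the lemma itself. The compatibility of the abstract Koszul grading of Theorem \ref{koszul} with the polynomial grading on Soergel modules is precisely the hard content here, and your argument assumes it rather than proves it.

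This gap cannot be closed by the tools you invoke. The uniqueness statement behind Corollary \ref{bs} (\cite[Theorem 3.3]{GG}) compares two gradings on an indecomposable module over a \emph{fixed} graded algebra, whereas here the two competing gradings live on the algebra $\End_{\O}(P(w_0\cdot\lam))$ itself; and the Hilbert-series check in your last paragraph is only a necessary condition --- two gradings on the same finite-dimensional algebra with equal graded dimensions need not be intertwined by any isomorphism. A genuine proof must supply an extra input: either construct the grading on $A_\lam$ directly as $\End_{C_\lam}(\oplus_{x}\mathbb{V}_\lam P(x\cdot\lam))$ with its polynomial grading (so that $\mathbb{V}_\lam$ is graded by construction, which is how \cite{So} and \cite{BGS} proceed) and then prove that this particular grading is the Koszul one, or invoke the uniqueness of Koszul gradings up to automorphism from \cite{BGS} after verifying that the Soergel-module grading is Koszul. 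Either route requires substantive work that your sketch does not contain.
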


Let $\lam$ be an integral dominant weight and $x\in W^\lam$. We have now two $\Z$-grading structure on $\mathbb{V}(P(x\cdot\lam)$: one comes from Corollary \ref{bs}, another comes from the Koszul grading on $\rgmod A_\lam$.

\begin{corollary} Let $\lam$ be an integral dominant weight and $x\in W^\lam$. Then the above two gradings on $\mathbb{V}_\lam(P(x\cdot\lam))$ coincides with each other up to a grading shift.
\end{corollary}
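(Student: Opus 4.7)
The plan is to deduce the corollary from the indecomposability of $\mathbb{V}_\lam(P(x\cdot\lam))$ (Corollary \ref{indec}) together with the standard uniqueness result for graded lifts of indecomposable modules cited in \cite[Theorem 3.3]{GG}, which was already invoked in Corollary \ref{bs}. First I would set the stage: by the lemma just preceding the corollary (\cite[Erweiterungssatz 5]{So}), $C_\lam$ carries an intrinsic $\Z$-grading inherited from the symmetric algebra $S(\fh^*)$, and both of the two structures in question make $\mathbb{V}_\lam(P(x\cdot\lam))$ into a finite dimensional $\Z$-graded $C_\lam$-module with respect to this same grading on $C_\lam$. For the first grading, this is the content of Corollary \ref{bs}, since the Bott–Samelson bimodule in (\ref{BS}) is manifestly graded over $C_\lam$. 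For the second, one notes that under the Koszul grading on $A_\lam$, the module $P(x\cdot\lam)$ acquires a graded lift $\mP(x\cdot\lam)$, and $\Hom_{A_\lam}(\mP(w_0\cdot\lam),\mP(x\cdot\lam))$ is naturally a graded right module over the graded endomorphism ring $\End_{A_\lam}(\mP(w_0\cdot\lam))$, which is identified with $C_\lam$ via the graded upgrade of Lemma \ref{c0}.

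Next I would forget the grading on both sides. Both structures yield the same ungraded $C_\lam$-module $\mathbb{V}_\lam(P(x\cdot\lam)) = \Hom_\O(P(w_0\cdot\lam),P(x\cdot\lam))$, and this ungraded module is indecomposable by Corollary \ref{indec}. Thus both are graded lifts, in the sense of \cite{GG}, of one and the same indecomposable ungraded module over the graded algebra $C_\lam$.

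Finally I would invoke the uniqueness statement for graded lifts: by \cite[Theorem 3.3]{GG}, any two $\Z$-graded lifts of an indecomposable module over a finite dimensional graded algebra (here $C_\lam$, which is finite dimensional, positively graded, with $(C_\lam)_0 = \C$) are isomorphic after a suitable grading shift $\<k\>$. Applying this to our two graded structures on $\mathbb{V}_\lam(P(x\cdot\lam))$ yields the desired conclusion.

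The only genuine subtlety is checking that the two graded structures are indeed both defined over the same graded algebra $C_\lam$, i.e.\ that the $C_\lam$-action inherited from the Koszul structure on $A_\lam$ agrees, as a graded action, with the one inherited from the isomorphism of Lemma \ref{c0} upgraded to graded algebras. This is where the graded upgrade in the preceding lemma is essential: once it is in place, the identification is tautological, and the main content of the corollary is simply the uniqueness-of-graded-lift principle applied to an indecomposable module.
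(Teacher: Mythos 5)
Your proposal is correct and follows essentially the same route as the paper: the paper's own proof is a one-line appeal to the indecomposability of $\mathbb{V}_\lam(P(x\cdot\lam))$ from Corollary \ref{indec}, with the uniqueness of graded lifts of indecomposable modules (the \cite[Theorem 3.3]{GG} principle you cite) left implicit. Your write-up merely makes explicit the ingredients the paper takes for granted, including the check that both gradings live over the same graded algebra $C_\lam$.
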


\begin{proof} By Corollary \ref{indec}, $\mathbb{V}_\lam(P(x\cdot\lam))$ is an indecomposable $C_\lam$-module. Therefore, the above two gradings on $\mathbb{V}_\lam(P(x\cdot\lam))$ coincides with each other up to a grading shift.
\end{proof}


\section{Coinvariant algebras and $\Z$-graded duality}

In this section we shall study the structure of the coinvariant algebra $C$ as well as its parabolic invariants $C^J$. We shall show that $C^J$ is a graded cellular algebra and equipped with a homogeneous symmetrizing form. These results are used to define a $\Z$-graded duality on $\rgmod A_\lam$.

Let $\Phi^+$ be the set of positive roots in $\Phi$.  For each $\beta\in\Phi^+$, let $s_\beta$ be the corresponding reflection in $W$. Let $G$ be the connected and simply connected complex semisimple Lie group with Lie algebra $\fg$. Let $B$ be the Borel subgroup of $G$ such that its Lie algebra is $\mathfrak{b}$. Let $G/B$ be the associated flag manifold, which plays a central role in many aspects of the representation theory of $G$ and $\fg$. Let $\H^*(G/B)$ be the cohomology of $G/B$ with coefficients in $\C$, which is a $\C$-algebra with multiplication given by the cup product. It is well-known (\cite[\S1]{BGG0}) that $\H^k(G/B)\neq 0$ only if $k\in 2\N$.
The $\C$-algebra $\H^*(G/B)$ is naturally $\Z$-graded if we endow any nonzero elements in $\H^k(G/B)$ with degree $k$ for $k\geq 0$. For each $\alpha\in\Pi$, let $\omega_\alpha\in\fh^*$ be the corresponding fundamental dominant weight which is defined by $\<\omega_\alpha,\beta^\vee\>=\delta_{\alpha,\beta}$ for any $\beta\in\Pi$.

\begin{theorem}\text{(\cite[Theorem 3.17]{BGG0},\cite[Chapter IV, (1.10),(2.9),(3.6)]{Hi})}\label{borel} 1) For each $w\in W$, there is a homogeneous element $X_w$ of degree $2\ell(w)$ in $C$, such that for any $\alpha\in\Pi$, $$
X_{s_\alpha}X_w=\sum_{\substack{\beta\in\Phi^+\\ \ell(ws_\beta)=\ell(w)+1}}\<\omega_\alpha,\beta^\vee\>X_{ws_\beta},
$$
Moreover, the elements $\{X_w|w\in W\}$ form a $\C$-basis of $C$;

2) Let $w,w'\in W$ with $\ell(w)=\ell(w')$. Then\footnote{We remark that the condition $\ell(w)=\ell(w')$ was mistakenly ignored in \cite[III, (2.9)]{Hi}, see \cite[Theorem 3.17(ii)]{BGG0}.} $X_w X_{w_0w'}=\delta_{w,w'}X_{w_0}$;

3) There is a $\Z$-graded $\C$-algebra isomorphism $c: C\cong \H^*(G/B)$. In particular, $\H^0(G/B)=\C$,  $\H^{2\ell(w_0)}(G/B)=\C c(X_{w_0})$, $\H^{2}(G/B)$ has a basis $\{c(X_{s_\alpha})|\alpha\in\Pi\}$, and $\H^*(G/B)$ is generated by $\H^0(G/B)$ and $\H^2(G/B)$;

4) For any homogeneous elements $x,y\in\H^*(G/B)$, $xy=0$ whenever $\deg x+\deg y>2\ell(w_0)$.
\end{theorem}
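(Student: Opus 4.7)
The plan is to follow the classical approach of \cite{BGG0} combined with Borel's theorem on the cohomology of $G/B$. The central tool is the family of divided difference (BGG) operators $A_{s_\alpha}: R \to R$ defined by $A_{s_\alpha}(f) := \alpha^{-1}(f - s_\alpha(f))$ for $\alpha \in \Pi$. These are $R^W$-linear maps of degree $-2$ which, because they annihilate $R_+^W$ only up to $R^W$-multiples, descend to well-defined operators on $C = R/I_W$.

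For part 3), I would invoke Borel's theorem: the characteristic map $R \to \H^*(G/B)$ sending each $\omega_\alpha$ to the first Chern class $c_1(\mathcal{L}_{\omega_\alpha})$ of the associated line bundle is a surjective $\Z$-graded algebra map with kernel exactly $I_W$, yielding the isomorphism $c: C \cong \H^*(G/B)$. The statements that $\H^0(G/B) = \C$, that $\H^2(G/B)$ has basis $\{c(X_{s_\alpha})\}_{\alpha\in\Pi}$, that $\H^*(G/B)$ is generated in degrees $0$ and $2$, and that $\H^{2\ell(w_0)}(G/B)$ is one-dimensional are then either immediate or standard consequences of Poincar\'e duality on the flag manifold.

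For part 1), I would first establish the crucial technical lemma that for any reduced expression $w = s_{i_1}\cdots s_{i_k}$ the composition $A_w := A_{s_{i_1}}\cdots A_{s_{i_k}}$ depends only on $w$; this reduces to verifying the braid relations $A_sA_tA_s\cdots = A_tA_sA_t\cdots$, which is a direct calculation on each rank-two root subsystem. Granted this, one defines $X_w$ as the image in $C$ of $A_{w^{-1}w_0}(D)/|W|$ where $D := \prod_{\beta \in \Phi^+} \beta$. The key recursive identity $A_s(X_w) = X_{ws}$ when $\ell(ws) = \ell(w) - 1$ and $A_s(X_w) = 0$ otherwise yields the linear independence of $\{X_w\}$ via a downward induction on length; the comparison of graded dimensions with the Poincar\'e polynomial $\prod_i(1 + q + \cdots + q^{d_i-1})$ of $C$ (where $d_i$ are the fundamental degrees of $W$) then forces $\{X_w\}$ to be a basis. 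The multiplication formula in 1) is Chevalley's formula, which I would derive from the Leibniz-type rule $A_s(fg) = A_s(f)\,g + s(f)A_s(g)$ applied iteratively together with the key identity above.

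Part 2) is the statement that $\{X_w\}$ and $\{X_{w_0 w'}\}_{\ell(w')=\ell(w)}$ are dual bases in complementary graded pieces for the Frobenius pairing $(x,y) \mapsto $ coefficient of $X_{w_0}$ in $xy$. This can be proved by induction on $\ell(w_0) - \ell(w)$, starting from the trivial case $w = w_0$ and using Chevalley's formula from part 1) to descend, with perfect pairing of the top-degree piece guaranteed by part 3). Part 4) is then immediate from part 3), since $\H^k(G/B) = 0$ for $k > 2\ell(w_0)$. I expect the main obstacle to be the verification that $A_w$ is independent of the reduced expression, together with the subsequent derivation of Chevalley's formula, both of which require careful manipulation of the rational-function identities underlying the BGG operators.
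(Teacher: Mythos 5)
The paper does not prove Theorem \ref{borel}; it is quoted verbatim from the cited sources (Bernstein--Gelfand--Gelfand's Schubert-cells paper and Hiller's book), and the only related material in the paper is Lemma \ref{Dema}, which records exactly the two ingredients you propose: the composition rule $\Delta_w\Delta_u=\Delta_{wu}$ or $0$, and the formula $X_w=\Delta_{w^{-1}w_0}\overline{D}$ with $D=\frac{1}{|W|}\prod_{\alpha\in\Phi^+}\alpha$. Your outline is therefore a faithful reconstruction of the standard argument: the braid relations for the divided difference operators, the recursion $\Delta_s(X_w)=X_{ws}$ for $\ell(ws)=\ell(w)-1$, the comparison with the Poincar\'e polynomial of $C$, the twisted Leibniz rule yielding Chevalley's formula, and Borel's characteristic map for part 3). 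One small imprecision: the operators descend to $C$ not because they ``annihilate $R_+^W$ up to $R^W$-multiples'' but because they are $R^W$-linear ($\Delta_s(fg)=f\Delta_s(g)$ for $f\in R^W$), hence preserve the ideal $I_W=R\cdot R_+^W$; this is how the paper itself phrases it just before Lemma \ref{Dema}.

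The one step that would fail as described is the induction for part 2). If you assume the duality for all elements of length $\ell+1$ and try to descend to length $\ell$ via Chevalley's formula, the products you generate are of the form $X_{ws_\beta}X_{w_0w'}$ with $\ell(ws_\beta)=\ell+1$ and $\ell(w_0w')=\ell(w_0)-\ell$; these sit in degree $2\ell(w_0)+2$ and vanish by part 4), so the identity you obtain is $0=0$ and the pairing coefficients $c_{w,w'}$ in $X_wX_{w_0w'}=c_{w,w'}X_{w_0}$ are never computed. Multiplying $X_{w_0w'}$ by $X_{s_\alpha}$ instead produces terms in non-complementary degrees, to which the inductive hypothesis does not apply. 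The actual computation of $c_{w,w'}$ requires the Demazure operators acting on $C$: one evaluates $\Delta_{w_0}(X_wX_{w_0w'})$ using the generalized (coproduct-type) Leibniz rule for $\Delta_{w_0}$ together with $\Delta_u(X_v)=X_{vu^{-1}}$ or $0$, or alternatively one invokes Poincar\'e duality of Schubert classes under the isomorphism of part 3). Since the result is classical and correctly cited, this is a defect of the sketch rather than of the theorem, but the mechanism you name for part 2) is not the one that works.
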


\begin{remark} 1) One should identify the element $X_w$ with the element $P_{w}$ in \cite[Theorem 3.14]{BGG0}. The image $c(P_w)\in\H^{2\ell(w)}(G/B)$ of $P_w\in C$ is called the Schubert cohomology class corresponding to $w$, which is dual to the Schubert homology class corresponding to $w$. For any $w,w'\in W$, the product $X_wX_{w'}$ can be expressed as an non-negative integral linear combination of some basis elements $\{X_{w''}|w''\in W\}$. The description of those integral coefficients is usually called Schubert calculus.

2) The third isomorphism in the above theorem is usually referred as Borel's picture for the cohomology of $G/B$.
\end{remark}

Henceforth, we use the map $c$ to identify $\H^*(G/B)$ with the coinvariant algebra $C$.

Recall that $R=S(\mathfrak{h}^*)$. For each $\alpha\in\Pi$, there is a Demazure operator (\cite[Definition 3.2]{BGG}, \cite[Chapter IV, \S1]{Hi}) $\Delta_{s_\alpha}: R\rightarrow R$ of degree $-2$, defined by $$
\Delta_{s_\alpha}(f):=\frac{f-s_\alpha f}{\alpha}\in R .
$$
If $w=s_{j_1}\cdots s_{j_k}$ is a reduced expression of $w\in W$, then by \cite[Theorem 3.4]{BGG} and \cite[Proposition 1.2]{Hi}), $\Delta_w:=\Delta_{s_{j_1}}\cdots\Delta_{s_{j_k}}$ depends only on $w$ but not on the choice of the reduced expression of $w$. If $f\in R^{W}$, then $\Delta_w(fg)=f\Delta_w(g)$ for any $g\in R$. Thus $\Delta_w$ naturally induces an endomorphism of $C:=R/I_W$ (Definition \ref{iw}), which will be still denoted by $\Delta_w$.

\begin{lemma}\text{\cite[Chapter IV, Proposition 2.8]{Hi}}\label{Dema} 1) Let $w,u\in W$. Then $\Delta_w\Delta_u=\Delta_{wu}$ if $\ell(wu)=\ell(w)+\ell(u)$; and $\Delta_w\Delta_u=0$ otherwise;

2) Let $w\in W$ and $D:=\frac{1}{|W|}\prod\limits_{\alpha\in \Phi^+}\alpha\in R$. Then $X_w=\Delta_{w^{-1}w_0}\overline{D}$, where $\overline{D}$ denotes the natural image of $D$ in $C$.
\end{lemma}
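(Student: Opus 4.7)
The plan is to prove (1) first by a standard argument, then use it to derive (2).

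For (1), I start from $\Delta_{s_\alpha}^{2}=0$ for each simple root $\alpha$, which is immediate from $s_\alpha(\alpha)=-\alpha$: the element $s_\alpha\bigl(\tfrac{f-s_\alpha f}{\alpha}\bigr)$ equals $\tfrac{f-s_\alpha f}{\alpha}$, so applying $\Delta_{s_\alpha}$ a second time kills $f$. When $\ell(wu)=\ell(w)+\ell(u)$, concatenating reduced expressions for $w$ and $u$ yields a reduced expression for $wu$, and the well-definedness of $\Delta_v$ recalled just before the lemma forces $\Delta_w\Delta_u=\Delta_{wu}$. When $\ell(wu)<\ell(w)+\ell(u)$, I induct on $\ell(u)$: writing $u=s_j u'$ reduced, if $\ell(ws_j)=\ell(w)+1$ then the first case gives $\Delta_w\Delta_{s_j}=\Delta_{ws_j}$ and the inductive hypothesis applied to $ws_j,u'$ yields $\Delta_{ws_j}\Delta_{u'}=0$; otherwise $\ell(ws_j)<\ell(w)$ and $w=(ws_j)s_j$ is reduced, so $\Delta_w\Delta_{s_j}=\Delta_{ws_j}\Delta_{s_j}^{2}=0$. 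Either way $\Delta_w\Delta_u=0$.

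For (2), set $Y_w:=\Delta_{w^{-1}w_0}(\overline D)$. The plan is to identify $Y_w$ with $X_w$ by matching the description of $\{X_w\}$ in Theorem \ref{borel}(1). Degree counting gives $\deg Y_w=2\ell(w_0)-2(\ell(w_0)-\ell(w))=2\ell(w)$, and part (1) yields the descending formula
\[
\Delta_{s_\alpha}(Y_w)=\begin{cases}Y_{ws_\alpha},&\ell(ws_\alpha)<\ell(w),\\ 0,&\ell(ws_\alpha)>\ell(w),\end{cases}
\]
since $s_\alpha\cdot w^{-1}w_0=(ws_\alpha)^{-1}w_0$ has length $\ell(w^{-1}w_0)+1$ in the first case and $\ell(w^{-1}w_0)-1$ in the second. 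Next I verify $Y_e=1$ using the classical antisymmetrisation identity $\Delta_{w_0}(f)=\bigl(\prod_{\alpha\in\Phi^+}\alpha\bigr)^{-1}\sum_{y\in W}(-1)^{\ell(y)}y(f)$ (provable by induction on $\ell(w_0)$ using part (1)): applied to $D=|W|^{-1}\prod_\alpha\alpha$ together with $y\bigl(\prod_\alpha\alpha\bigr)=(-1)^{\ell(y)}\prod_\alpha\alpha$, it gives $\Delta_{w_0}(D)=1$ and hence $Y_e=1$. Combined with $\Delta_{s_\gamma}(\overline{\omega_\beta})=\<\omega_\beta,\gamma^\vee\>=\delta_{\beta,\gamma}$ and the descending formula at $w=s_\alpha$, this forces $Y_{s_\alpha}=\overline{\omega_\alpha}$.

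The main obstacle is to verify that the $Y_w$ satisfy the multiplicative recursion
\[
Y_{s_\alpha}Y_w=\sum_{\beta\in\Phi^+,\,\ell(ws_\beta)=\ell(w)+1}\<\omega_\alpha,\beta^\vee\>Y_{ws_\beta}
\]
of Theorem \ref{borel}(1), namely the Chevalley formula in $C\cong H^*(G/B)$. This is a delicate combinatorial identity interplaying multiplication by $\overline{\omega_\alpha}$ in $C$ with the expansion of products of fundamental weights modulo $R_+^W$. The natural strategy combines the Leibniz rule $\Delta_{s_\alpha}(fg)=\Delta_{s_\alpha}(f)g+s_\alpha(f)\Delta_{s_\alpha}(g)$, the descending formula above, the identity $\Delta_{s_\alpha}(\overline{\omega_\gamma})=\delta_{\alpha,\gamma}$, and the action of simple reflections on fundamental weights, via an induction on length. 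The detailed combinatorial bookkeeping is carried out in \cite[Ch.~IV, Prop.~2.8]{Hi} and \cite[\S3]{BGG0}. Granting the recursion together with the normalisations $Y_e=1$ and $Y_{s_\alpha}=\overline{\omega_\alpha}$, the $Y_w$ satisfy all the defining properties of $\{X_w\}$ in Theorem \ref{borel}(1); an induction on $\ell(w)$ along reduced expressions then yields $Y_w=X_w$ for all $w\in W$.
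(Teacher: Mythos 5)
The paper offers no proof of this lemma: it is quoted verbatim from Hiller and (implicitly) Bernstein--Gelfand--Gelfand, so there is no internal argument to compare against. Your part (1) is a complete, standard and correct argument: $\Delta_{s_\alpha}^2=0$, concatenation of reduced expressions via the well-definedness recalled before the lemma, and the induction on $\ell(u)$ for the vanishing case all check out.

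Part (2), however, has a genuine gap at its only nontrivial point. Your strategy is to show that $Y_w:=\Delta_{w^{-1}w_0}(\overline D)$ satisfies the properties of Theorem \ref{borel}(1) and conclude $Y_w=X_w$, but the decisive step --- the Chevalley recursion $Y_{s_\alpha}Y_w=\sum_{\beta}\<\omega_\alpha,\beta^\vee\>Y_{ws_\beta}$ --- is not proved; you defer it to \cite[Ch.~IV, Prop.~2.8]{Hi} and \cite[\S 3]{BGG0}, which are precisely the sources the lemma is quoting, so the argument is circular exactly where it matters. Moreover, even granting that recursion, the concluding identification is incomplete: Theorem \ref{borel}(1) does not assert that the recursion together with $X_e=1$ and $X_{s_\alpha}=\overline{\omega_\alpha}$ characterises the family. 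Since the sum runs over all positive roots $\beta$ with $\ell(ws_\beta)=\ell(w)+1$, one cannot simply ``induct along reduced expressions''; recovering the $X_v$ of length $\ell(w)+1$ from the products $X_{s_\alpha}X_w$ requires inverting the resulting linear system, i.e.\ the degree-$2$ generation of Theorem \ref{borel}(3) plus a rank statement you do not address. A cleaner route, most of which you already have, is the descending characterisation: you prove $\Delta_{s_\alpha}(Y_w)=Y_{ws_\alpha}$ when $\ell(ws_\alpha)<\ell(w)$ and $=0$ otherwise, with $Y_{w_0}=\overline D$; the Schubert classes of \cite{BGG0} satisfy the identical descending relations with the same top class $X_{w_0}=\overline D$, whence $X_w=\Delta_{w^{-1}w_0}(X_{w_0})=Y_w$ follows at once from your part (1), with no appeal to the Chevalley formula. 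But this, too, requires importing the descending relations and the normalisation of $X_{w_0}$ from the source, which the paper's Theorem \ref{borel} does not record --- an indication that the lemma is really the source's definition of $X_w$ in disguise, and that a self-contained proof must take some such normalisation as input.
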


\begin{corollary}\label{tr0} 1) The set $\{X_w|w\in W\}$ is a $\Z$-graded cellular basis of $C$ which makes $C$ into a $\Z$-graded cellular algebra in the sense of \cite{HuMathas:GradedCellular}. Moreover, each cell module of $\H^*(G/B)$ is one-dimensional.

2) The map $X_w\mapsto\delta_{w,w_0}$, $\forall\,w\in W$, can be extends uniquely to a $\C$-linear homogeneous symmetrizing form $\tr$ of degree $-2\ell(w_0)$ on $C$.
\end{corollary}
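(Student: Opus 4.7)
The plan is to leverage the commutativity of $C$ together with the explicit multiplicative structure of the basis $\{X_w\}_{w\in W}$ recorded in Theorem \ref{borel}. Both parts reduce to direct verification once the right bookkeeping is in place.

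For part 1), the crucial observation is that since $C$ is commutative, the identity map is an anti-involution, so $(X_w)^\ast = X_w$ holds tautologically. I would take the poset $\Lambda = W$ ordered by length, with $T(w) = \{t_w\}$ a singleton of degree $\ell(w)$ for each $w\in W$; this matches the grading requirement $\deg X_w = 2\ell(w) = \deg(t_w) + \deg(t_w)$. The only non-trivial cellularity axiom reduces to verifying $a X_w \equiv r_a(w) X_w \pmod{C^{>w}}$, where $C^{>w}$ denotes the span of $X_u$ with $\ell(u) > \ell(w)$. Since $X_u$ has degree $2\ell(u)$ and $C$ is concentrated in non-negative even degrees, $C^{>w}$ coincides with $\bigoplus_{k>2\ell(w)}C_k$ and is therefore a two-sided ideal. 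For $a = X_u$, the product $X_uX_w$ is homogeneous of degree $2(\ell(u)+\ell(w))$, hence a linear combination of basis elements $X_{w'}$ with $\ell(w') = \ell(u)+\ell(w)$; when $u = e$ this recovers $X_w$ with coefficient $1$, while for $u \neq e$ every basis element appearing lies in $C^{>w}$. Linearity extends the verification to arbitrary $a$. The singletons $T(w)$ force each cell module to be one-dimensional.

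For part 2), define $\tr$ on the basis by $\tr(X_w) := \delta_{w,w_0}$ and extend linearly; well-definedness is immediate, and homogeneity of degree $-2\ell(w_0)$ follows because $\tr$ vanishes on every basis element except $X_{w_0}$, which sits in degree $2\ell(w_0)$. Commutativity of $C$ delivers $\tr(ab) = \tr(ba)$ for free, so only non-degeneracy of the pairing $(a,b) \mapsto \tr(ab)$ remains. I would compute the Gram matrix $(\tr(X_uX_v))_{u,v\in W}$ by splitting into three length regimes: if $\ell(u)+\ell(v) > \ell(w_0)$, Theorem \ref{borel} part 4) gives $X_uX_v = 0$; if $\ell(u)+\ell(v) < \ell(w_0)$, the product is homogeneous of degree strictly less than $2\ell(w_0)$ and so cannot involve $X_{w_0}$, forcing $\tr(X_uX_v) = 0$; if $\ell(u) + \ell(v) = \ell(w_0)$, writing $v = w_0w'$ yields $\ell(w') = \ell(u)$, and Theorem \ref{borel} part 2) produces $X_uX_v = \delta_{u,w'}X_{w_0} = \delta_{u,w_0v}X_{w_0}$, whence $\tr(X_uX_v) = \delta_{u,w_0v}$. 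The Gram matrix is therefore the permutation matrix associated to the involution $v \mapsto w_0v$ of $W$, and is in particular invertible; non-degeneracy follows.

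Neither part presents a genuine obstacle: the harder bookkeeping — the precise multiplicative relations among the $X_w$ — has already been packaged in Theorem \ref{borel}, and the commutativity of $C$ supplies the anti-involution and the trace symmetry without any work. The only place where care is required is the tripartite case-split in the Gram matrix computation, so that the non-degeneracy conclusion rests on a genuine invertible matrix rather than an accidental vanishing.
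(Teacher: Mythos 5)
Your proposal is correct and takes essentially the same route as the paper: the identity map as the cellular anti-involution, singleton index sets giving one-dimensional cell modules, and non-degeneracy of $\tr$ read off from the duality relation $X_wX_{w_0w'}=\delta_{w,w'}X_{w_0}$ of Theorem \ref{borel}. The paper's proof is a two-line sketch of exactly this argument; your version merely supplies the degree bookkeeping and the Gram-matrix computation explicitly.
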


\begin{proof} We take the identity map as the anti-involution in the cellular structure. Then Part 1) of the corollary follows from Theorem \ref{borel} and the fact that $C$ is generated by its degree two and degree zero components. Part 2) of the corollary follows from the second statement in Theorem \ref{borel}.
\end{proof}

\begin{definition} Let $J\subset\Pi$ be a subset and $W_J$ the corresponding parabolic subgroup of $W$ generated by $s_\alpha$ for $\alpha\in J$. Let $W^J$ be the set of minimal length left coset representatives of $W_J$ in $W$. We define $$
C^J:=C^{W_J} .
$$
\end{definition}
In particular, for any $w\in W^J$ and $\alpha\in J$, we have $\ell(ws_\alpha)=\ell(w)+1$.

\begin{lemma}\text{(\cite{BGG0}, \cite[III, (4.2),(4.4)]{Hi})}\label{cj1} Let $J\subset\Pi$ be a subset. Then for any $w\in W^J$, $X_w\in C^J$. Moreover, the elements in the following subset $$
\{ X_w | w\in W^J \} $$ form a $\C$-basis of $C^J$.
\end{lemma}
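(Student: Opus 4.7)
The plan is to prove both assertions by separating them: first establish the containment $X_w \in C^J$ for each $w \in W^J$ using the Demazure operator formula $X_w = \Delta_{w^{-1}w_0}(\overline{D})$ from Lemma \ref{Dema}(2), and then deduce the basis property from a dimension count.

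For the containment, the key observation is that $\Delta_{s_\alpha}(g)$ is automatically $s_\alpha$-invariant for every $g \in R$: this is immediate from the definition $\Delta_{s_\alpha}(g) = (g - s_\alpha g)/\alpha$, since $s_\alpha$ swaps the two terms of the numerator and sends $\alpha \mapsto -\alpha$. Now fix $w \in W^J$ and $\alpha \in J$. The defining condition of $W^J$ gives $\ell(ws_\alpha) = \ell(w) + 1$; writing $s_\alpha w^{-1}w_0 = (ws_\alpha)^{-1}w_0$ and using the length formula $\ell(uw_0) = \ell(w_0) - \ell(u)$, one obtains $\ell(s_\alpha w^{-1}w_0) = \ell(w^{-1}w_0) - 1$. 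Thus $s_\alpha$ is a left descent of $w^{-1}w_0$, so some reduced expression of $w^{-1}w_0$ begins with $s_\alpha$, and Lemma \ref{Dema}(1) yields $\Delta_{w^{-1}w_0} = \Delta_{s_\alpha}\Delta_{s_\alpha w^{-1}w_0}$. Consequently $X_w = \Delta_{s_\alpha}\bigl(\Delta_{s_\alpha w^{-1}w_0}(\overline{D})\bigr)$ is $s_\alpha$-invariant. As this holds for every $\alpha \in J$ and the $s_\alpha$'s generate $W_J$, we conclude $X_w \in C^{W_J} = C^J$.

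For the basis property, the set $\{X_w : w \in W^J\}$ is linearly independent as a subset of the $\C$-basis $\{X_w : w \in W\}$ of $C$ from Theorem \ref{borel}(1), so it suffices to show $\dim C^J = |W^J|$. I would deduce this from the classical Chevalley--Shephard--Todd identification of $C$ with the regular representation of $W$ as $W$-modules: restriction to $W_J$ decomposes $\C W$ into $|W^J|$ copies of the regular representation of $W_J$, whose $W_J$-invariant subspace is one-dimensional, giving $\dim C^{W_J} = |W^J|$. The main obstacle in a self-contained treatment is supplying this dimension equality, since the regular-representation structure of $C$ is not recorded in the excerpt; an alternative Poincar\'e-series argument using the degrees of the fundamental invariants of $W$ and $W_J$ would work but introduces combinatorial overhead without affecting the conceptual structure.
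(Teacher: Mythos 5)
The paper does not prove this lemma; it is imported verbatim from \cite{BGG0} and \cite{Hi}, so there is no in-paper argument to compare against. Your proof is correct. The containment half is complete and self-contained given Lemma \ref{Dema}: the computation $\ell(s_\alpha w^{-1}w_0)=\ell(w_0)-\ell(ws_\alpha)=\ell(w^{-1}w_0)-1$ for $\alpha\in J$ is right, Lemma \ref{Dema}(1) then factors $\Delta_{w^{-1}w_0}=\Delta_{s_\alpha}\Delta_{s_\alpha w^{-1}w_0}$, and the $s_\alpha$-invariance of any $\Delta_{s_\alpha}(g)$ is immediate from the definition (and descends to $C$ since $I_W$ is $W$-stable). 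For the basis half, linear independence is free from Theorem \ref{borel}(1), and your dimension count $\dim C^{W_J}=|W^J|$ via Chevalley's theorem that $C$ affords the regular representation of $W$ is valid; you correctly flag that this is the one external input not recorded in the excerpt (it is in the cited sources). An alternative that stays closer to the paper's toolkit: the identity $\partial_J(\Delta_{w_Jd^{-1}w_0}\overline{D})=X_d$ used in Lemma \ref{linlem}, together with $\partial_J(R)\subseteq R^J$ and the fact that $\partial_J(f\mu_J)=f\,\partial_J(\mu_J)$ with $\partial_J(\mu_J)$ a nonzero constant, shows $\partial_J:C\to C^J$ is surjective with image spanned by $\{X_d\mid d\in W^J\}$, giving the spanning statement without invoking the regular-representation structure --- though that route trades one classical input for another. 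Either way your argument stands.
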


\begin{corollary}\label{cellular2} The set $\{X_w|w\in W^J\}$ is a $\Z$-graded cellular basis of $C^J$ which makes $C^J$ into a $\Z$-graded cellular algebra in the sense of \cite{HuMathas:GradedCellular}. Moreover, each cell module of $C^J$ is one-dimensional.
\end{corollary}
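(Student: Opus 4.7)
My plan is to follow the argument given for $C$ in Corollary \ref{tr0}: take the identity map as the cellular anti-involution (permissible because $C^J$ is commutative as a subring of the commutative ring $C = S(\fh^*)/I_W$), adopt the basis $\{X_w \mid w \in W^J\}$ provided by Lemma \ref{cj1}, and order $W^J$ by length (refined to a total order arbitrarily). The grading on the cellular basis is $\deg X_w = 2\ell(w)$, inherited from $C$. Each ``cell'' consists of a single element $w \in W^J$ with associated basis element $X_w$, so each cell module is automatically one-dimensional.

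The only nontrivial step is to check the cellular multiplication relation. The crucial observation is that $C^J = \bigoplus_{k \geq 0} (C^J)_{2k}$ has degree-zero component $\C$, so every element of $C^J$ of strictly positive degree raises the degree upon multiplication. Consequently, for any $w \in W^J$ and any homogeneous $a \in C^J$ with $\deg a > 0$, the product $a X_w$ lies in $C^J$ and has degree greater than $2\ell(w)$; by Lemma \ref{cj1}, it expands as a $\C$-linear combination of $\{X_{w'} \mid w' \in W^J,\ \ell(w') > \ell(w)\}$. Combined with the trivial identity $c \cdot X_w = c X_w$ for $c \in \C$, this yields, for any $a \in C^J$ with scalar part $a_0$,
$$a \cdot X_w \equiv a_0\, X_w \pmod{\mathrm{span}_\C\{X_{w'} \mid w' \in W^J,\ \ell(w') > \ell(w)\}},$$
which is exactly the graded cellular axiom of \cite{HuMathas:GradedCellular} for the one-dimensional cell at $w$. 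The associated descending filtration by two-sided ideals of $C^J$ is
$$I_k := \mathrm{span}_\C\{X_w \mid w \in W^J,\ \ell(w) \geq k\},$$
each $I_k$ being an ideal by the same degree argument together with Lemma \ref{cj1}.

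The remaining axioms are immediate: $(X_w)^* = X_w$ is obvious for the identity anti-involution, which is also degree preserving. I do not foresee any serious obstacle; the main technical point is precisely the degree argument coupled with Lemma \ref{cj1}, and the whole result is essentially a restriction of the cellular structure on $C$ (established in Corollary \ref{tr0}) to its $W_J$-invariant subalgebra, where the sub-basis $\{X_w \mid w \in W^J\}$ again fits into the cellular framework because $C^J$ is closed under multiplication in $C$.
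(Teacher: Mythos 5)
Your proposal is correct and matches the paper's (implicit) argument: the paper gives no separate proof of this corollary, intending it to follow from Lemma \ref{cj1} together with the same reasoning as Corollary \ref{tr0} (identity anti-involution, homogeneous basis $\{X_w\}$, length filtration, one-dimensional cells). Your pure degree argument for the ideal filtration is a clean and valid substitute for the paper's appeal to the Pieri-type formula of Theorem \ref{borel} plus generation in degrees $0$ and $2$, so nothing is missing.
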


For any $w\in W$, there is a unique decomposition $w=du$, where $d\in W^J$ and $u\in W_J$. Let $d_J$ be the unique element in $W^J$ such that $w_0\in d_{J}W_J$. Let $w_J$ be the unique longest element in $W_J$.

\begin{theorem}\label{tr} The map $X_d\mapsto\delta_{d,d_{J}}$, $\forall\,d\in W^J$, can be extended uniquely to a $\C$-linear homogeneous symmetrizing form $\tr_J$ of degree $-2\ell(w_0)+2\ell(w_J)$ on $C^J$.
\end{theorem}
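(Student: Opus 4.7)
The plan is to first extend the prescribed assignment $X_d\mapsto\delta_{d,d_J}$ ($d\in W^J$) $\C$-linearly, using the basis from Lemma \ref{cj1}, to obtain a unique $\C$-linear map $\tr_J:C^J\to\C$. Since the only basis vector sent to a nonzero value is $X_{d_J}$, which has degree $2\ell(d_J)=2(\ell(w_0)-\ell(w_J))$ (from the reduced factorization $w_0=d_Jw_J$), the map $\tr_J$ is homogeneous of degree $-2\ell(w_0)+2\ell(w_J)$, and symmetry of $(x,y)\mapsto\tr_J(xy)$ will be automatic since $C^J\subseteq C$ is commutative. The nontrivial content is non-degeneracy, and my strategy is to relate $\tr_J$ to the symmetrizing form $\tr$ on $C$ (Corollary \ref{tr0}) through the Demazure operator $\Delta_{w_J}$.

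First I would compute the action of Demazure operators on the Schubert basis. Using $X_y=\Delta_{y^{-1}w_0}(\overline{D})$ from Lemma \ref{Dema}(2) together with Lemma \ref{Dema}(1), one obtains the elementary formula
$$
\Delta_s(X_y)=\begin{cases}X_{ys}, & \text{if } ys<y,\\ 0, & \text{if } ys>y,\end{cases}
$$
for each simple reflection $s$. Iterating through any reduced expression of $w_J$ and using the unique factorization $W=W^J\cdot W_J$ with $\ell(du)=\ell(d)+\ell(u)$ for $d\in W^J,u\in W_J$, I would deduce
$$
\Delta_{w_J}(X_y)=\begin{cases}X_d, & \text{if } y=dw_J \text{ with } d\in W^J,\\ 0, & \text{otherwise.}\end{cases}
$$
In particular, $\Delta_{w_J}$ sends $C$ into $C^J$, and checking both sides on the basis $\{X_y\}_{y\in W}$ gives the key identity $\tr_J\circ\Delta_{w_J}=\tr$ on $C$ (both equal $\delta_{y,w_0}$, using $d_Jw_J=w_0$).

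Next I would establish that $\Delta_{w_J}$ is $C^J$-linear. This follows from the elementary identity $\Delta_s(fg)=f\Delta_s(g)$ whenever $s(f)=f$, an immediate check from $\Delta_s(h)=(h-sh)/\alpha_s$, iterated through a reduced expression of $w_J$ in $W_J$ using $R^{W_J}\subseteq R^s$ for every simple $s\in J$. This descends to $\Delta_{w_J}(\alpha\beta)=\alpha\,\Delta_{w_J}(\beta)$ for $\alpha\in C^J$ and $\beta\in C$.

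With these two ingredients, non-degeneracy follows by contrapositive: if $\tr_J(xy)=0$ for all $y\in C^J$, then for every $z\in C$ one has
$$
\tr(xz)=\tr_J(\Delta_{w_J}(xz))=\tr_J\bigl(x\cdot\Delta_{w_J}(z)\bigr)=0,
$$
so $x=0$ by non-degeneracy of $\tr$ on $C$. The main obstacle is the explicit computation of $\Delta_{w_J}(X_y)$, which is where the combinatorics of right descents and the factorization $W=W^J\cdot W_J$ enters; once this is in hand, the Demazure operator functions as a transfer map that automatically pulls back non-degeneracy of $\tr$ to non-degeneracy of $\tr_J$.
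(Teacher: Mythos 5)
Your proposal is correct, and it establishes non-degeneracy by a genuinely different mechanism than the paper. The paper argues combinatorially: for each $d\in W^J$ it builds a saturated Bruhat chain $d=y_0<y_1<\cdots<y_k=d_J$ inside $W^J$ (via \cite[Theorem 2.5.5]{BjB}), picks at each step a fundamental weight $\omega_{\alpha_i}$ with $\alpha_i\in\Pi\setminus J$ pairing nontrivially with the relevant positive root, and uses the Chevalley-type formula of Theorem \ref{borel}(1) together with the non-negativity of the Schubert structure constants to conclude $X_{s_{\alpha_{k-1}}}\cdots X_{s_{\alpha_0}}X_d=NX_{d_J}$ with $N>0$. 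You instead use the Demazure operator $\partial_J=\Delta_{w_J}$ as a $C^J$-linear transfer map with $\tr_J\circ\partial_J=\tr$, pulling back non-degeneracy from $\tr$ on $C$; your computation $\Delta_{w_J}(X_{dw_J})=X_d$ is exactly the one appearing in the proof of Lemma \ref{linlem}, so your route fits naturally with the Frobenius-extension picture of Lemma \ref{2adjR} and Proposition \ref{keyprop1}. Your argument has the mild advantage of treating an arbitrary element $x\in C^J$ uniformly (the paper's chain argument is phrased only for basis elements $X_d$), while the paper's proof yields the extra positivity information $X_dz=NX_{d_J}$, $N>0$. Two small points you should make explicit: (i) the $C^J$-linearity of the induced operator $\Delta_{w_J}$ on $C$ requires lifting a given element of $C^J=C^{W_J}$ to $R^{W_J}$, which is precisely Lemma \ref{linlem}; (ii) the composite $\tr_J\circ\Delta_{w_J}$ is defined on all of $C$ because $\Delta_{w_J}(C)\subseteq C^J$, which follows from your basis formula (or from $\partial_J(R)\subseteq R^J$ as quoted from \cite{EMTW}).
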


\begin{proof} It suffices to show that $\tr_J$ is non-degenerate. Let $d\in W^J$. Applying \cite[Theorem 2.5.5]{BjB}, there exist elements $y_i\in W^J$, $\ell(y_i)=\ell(d)+i$ for $0\leq i\leq k$, such that
$d=y_0<y_1<\cdots<y_k=d_{J}$. By the definition of the Bruhat order, there exists $\beta_i\in\Phi^+$ such that $y_{i+1}=y_{i}s_{\beta_i}$ and $\ell(y_{i+1})=\ell(y_i)+1$ for each $0\leq i<k$. It is clear that $\beta_i\notin J$ as otherwise $s_{\beta_i}\in W_J\setminus\{e\}$ which contradicts to the fact that $y_{i+1}\in W^J$.

Since $\beta_1\notin J$, we can choose $\alpha_0\in\Pi\setminus J$ such that $\<\omega_{\alpha_0},\beta_1^\vee\>\neq 0$. Applying Theorem \ref{borel}, we can deduce that $X_{y_1}$ appears with positive coefficient in the expansion of $X_{s_{\alpha_0}}X_d$, and all the other $X_u$ appears with non-negative coefficient in expansion of $X_{s_{\alpha_0}}X_d$. Similarly, as $\beta_2\notin J$, we can apply Theorem \ref{borel} again to choose $\alpha_1\in\Pi\setminus J$ such that $\<\omega_{\alpha_1},\beta_2^\vee\>\neq 0$ and $X_{y_2}$ appears with positive coefficient in the expansion of $X_{s_{\alpha_1}}X_{y_1}$, and all the other $X_u$ appears with non-negative coefficient in expansion of $X_{s_{\alpha_1}}X_{y_1}$. Iteratively, we can get $\alpha_0,\alpha_1,\cdots,\alpha_{k-1}\in\Pi\setminus J$, such that for each $i$, $\<\omega_{\alpha_i},\beta_{i+1}^\vee\>\neq 0$ and $X_{y_{i+1}}$ appears with positive coefficient in the expansion of $X_{s_{\alpha_i}}X_{y_i}$, and  all the other $X_u$ appears with non-negative coefficient in expansion of $X_{s_{\alpha_i}}X_{y_i}$. Finally, we can deduce that $X_{d_{J}}$ appears with positive coefficient in $$
X_{s_{\alpha_{k-1}}}\cdots X_{s_{\alpha_{1}}}X_{s_{\alpha_{0}}}X_d .
$$
Note that $\alpha_i\in\Pi\setminus J$ implies that $X_{s_{\alpha_i}}\in W^J$. Moreover, as $d_J$ is the unique longest element in $W^J$, in this case we actually have that
$X_{s_{\alpha_{k-1}}}\cdots X_{s_{\alpha_{1}}}X_{s_{\alpha_{0}}}X_d=NX_{d_{J}}$ for some $0<N\in\N$. This proves that $\tr_J$ is nondegenerate and hence completes the proof of the theorem.
\end{proof}

%
%

We define \begin{equation}\label{rj}
\partial_J:=\Delta_{w_J},\quad R^J:=R^{W_J}=\{x\in R|w(x)=x,\forall\,w\in W_J\}.
\end{equation}

By \cite[\S24.3.1, Page 496, Line -11]{EMTW}, we know that $\partial_J(R)\subseteq R^J$.

\begin{lemma}\text{\rm (\cite[Lemma 24.35, Theorem 24.36]{EMTW})}\label{2adjR} 1) $R$ is a free $R^J$-module with a basis $\{\Delta_w(\mu_J)|w\in W_J\}$. Moreover, the pairing $(f,g):=\partial_J(fg)$ is a non-degenerate homogeneous bilinear form of degree $-2\ell(w_J)$.

2) The inclusion $\iota_J: R^J\hookrightarrow R$ and the map $\partial_J: R\rightarrow R^J$ equip $R$ with the structure a graded Frobenius algebra over $R^J$.

3) There are the following two adjoint pairs of graded functors: $$
\bigl(\ind_{R^J}^{R}(-), \res_{R^J}^{R}(-)\bigr),\quad \bigl((\res_{R^J}^{R}(-), \ind_{R^J}^{R}\<-2\ell(w_J)\>(-)\bigr) .
$$
\end{lemma}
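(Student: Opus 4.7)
The plan is to treat the three parts in order; parts 2) and 3) reduce to formal manipulations once part 1) is in place.

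\textbf{Part 1).} Since $W_J$ is itself a finite reflection group on $\mathfrak{h}^*$, Chevalley's theorem applied to $W_J$ gives that $R$ is free over $R^J$ of rank $|W_J|$. To pin down the explicit basis $\{\Delta_w(\mu_J)\mid w\in W_J\}$, I would combine Lemma \ref{Dema}(1) with a direct computation of $\partial_J(\mu_J)$: the composition rule $\Delta_{w_J}\Delta_w=\Delta_{w_Jw}$ when lengths add (and $0$ otherwise) forces $\ell(w_Jw)=\ell(w_J)+\ell(w)$ for $w\in W_J$ only at $w=e$, so $\partial_J(\Delta_w(\mu_J))=\delta_{w,e}\,\partial_J(\mu_J)$; a parabolic analogue of Lemma \ref{Dema}(2) then yields $\partial_J(\mu_J)=1$ once $\mu_J$ is normalized correctly. $R^J$-linear independence of $\{\Delta_w(\mu_J)\}$ follows by applying $\partial_J\circ\Delta_{u^{-1}}$ to any relation $\sum_{w}a_w\Delta_w(\mu_J)=0$ (using $R^J$-linearity of $\partial_J$, which is legitimate because $\partial_J(R)\subseteq R^J$) to extract $a_u$; surjectivity then follows by matching Hilbert series.

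\textbf{Non-degeneracy and Part 2).} The triangularity established above, together with the twisted Leibniz identities for Demazure operators, makes the Gram matrix $\bigl(\partial_J(\Delta_w(\mu_J)\Delta_v(\mu_J))\bigr)_{w,v\in W_J}$ invertible over $R^J$; equivalently, one can build an $R^J$-dual basis inductively on length. Non-degeneracy then says the $(R,R^J)$-bimodule map $R\to\Hom_{R^J}(R,R^J)$, $f\mapsto\partial_J(f\cdot-)$, is an isomorphism. A degree count using $\deg\partial_J=-2\ell(w_J)$ upgrades this to a graded bimodule isomorphism $R\langle -2\ell(w_J)\rangle\xrightarrow{\sim}\Hom_{R^J}(R,R^J)$, which is precisely the statement that $R/R^J$ is a graded Frobenius extension with trace form $\partial_J$ of degree $-2\ell(w_J)$.

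\textbf{Part 3).} The first adjunction $(\ind_{R^J}^R,\res_{R^J}^R)$ is the standard tensor–hom adjunction available for any graded ring extension. For the second pair, the right adjoint of $\res$ is coinduction $\Hom_{R^J}(R,-)$; the bimodule isomorphism from Part 2) gives
$$\Hom_{R^J}(R,N)\;\cong\;R\langle -2\ell(w_J)\rangle\otimes_{R^J}N\;=\;\ind_{R^J}^R(N)\langle -2\ell(w_J)\rangle,$$
yielding the pair $\bigl(\res_{R^J}^R,\ind_{R^J}^R\langle -2\ell(w_J)\rangle\bigr)$. The main obstacle throughout the argument is the explicit identification of $\mu_J$ and the verification $\partial_J(\mu_J)=1$; once that computation is secured, the basis, the non-degeneracy, the Frobenius structure, and both adjunctions all follow essentially formally.
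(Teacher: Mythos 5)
Your proposal is correct in substance, but it distributes its effort differently from the paper. The paper simply cites \cite[Lemma 24.35, Theorem 24.36]{EMTW} for parts 1) and 2) and only argues part 3): it takes the first adjunction as standard and, for the second, constructs the comultiplication $\delta:R\to R\otimes_{R^J}R$, $1\mapsto\sum_{w\in W_J}\Delta_w(\mu_J)\otimes b_w$, from a dual basis, letting $\delta$ furnish the unit of $\bigl(\res,\ind\<-2\ell(w_J)\>\bigr)$. You instead pass through coinduction and the graded bimodule isomorphism $R\<-2\ell(w_J)\>\cong\Hom_{R^J}(R,R^J)$; this is the other standard face of the same Frobenius-extension formalism and is entirely equivalent (it uses that $R$ is finite free over $R^J$, which you have from part 1)). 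The extra content in your write-up is the sketch of parts 1) and 2), which the paper does not prove at all. Two small soft spots there: with the paper's normalization $\mu_J=\prod_{\alpha\in\Phi_J^+}\alpha$ one gets $\partial_J(\mu_J)=|W_J|$ rather than $1$ (harmless, but worth stating as you half-anticipate); and the step extracting $a_u$ from $\sum_w a_w\Delta_w(\mu_J)=0$ by applying $\partial_J\circ\Delta_{u^{-1}}$ does not literally isolate $a_u$, since $\Delta_{w_Ju^{-1}w}(\mu_J)$ for $w\neq u$ need not vanish --- one should instead argue by triangularity of the Gram matrix or by induction on degree, as in \cite{EMTW}. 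Neither issue affects the parts the paper actually proves.
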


\begin{proof} 1) and 2) follows from \cite[Lemma 24.35, Theorem 24.36]{EMTW}. It is well-known that the first pair of graded functors is an adjoint pair. It remains to consider the second pair of graded functors. Applying 1), we can find a homogeneous basis $\{b_w|w\in W_J\}$ which is dual to the basis $\{\Delta_w(\mu_J)|w\in W_J\}$ under $\partial_J$. We define the map $\delta: R\rightarrow R\otimes_{R^J}R$ which maps $1$ to
$\sum_{w\in W_J}\Delta_w(\mu_J)\otimes_{R^J}b_w$. Then the homogeneous map $\delta$ induces the second adjoint pair of graded functors.
\end{proof}

The following result, which plays a key role in the proof of the next proposition, was communicated to me by Dr. Huang Lin.

\begin{lemma}\label{linlem} Let $J\subset\Pi$ be a subset. Then the natural map $\pi: R^J\rightarrow C^J$ is surjective.
\end{lemma}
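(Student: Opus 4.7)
The plan is to establish surjectivity by a short averaging argument, exploiting that $W_J$ is finite and we work over $\C$. Concretely, I will show that every $W_J$-invariant element in $C$ can be lifted to a $W_J$-invariant element in $R$.

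First I would verify that $\pi$ is well defined: the ideal $I_W \subset R$ is generated by the positive-degree $W$-invariants, so it is stable under $W$ (and in particular under $W_J$), which makes the quotient map $R \twoheadrightarrow C$ equivariant for $W_J$ and turns the restricted map $R^J \to C$ into a ring homomorphism landing inside $C^J$. Then, to prove surjectivity, I would take any $\bar{c} \in C^J$, pick an arbitrary preimage $\tilde{c} \in R$, and apply the averaging operator $e_J := \frac{1}{|W_J|}\sum_{w\in W_J} w$. Its output $e_J(\tilde{c})$ visibly lies in $R^J$, and the key identity $e_J(\tilde{c}) - \tilde{c} \in I_W$ follows because each $w(\tilde{c})$ again represents $\bar{c}$ in $C$ by $W_J$-equivariance of the quotient, so $w(\tilde{c}) - \tilde{c} \in I_W$ for every $w \in W_J$. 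This immediately gives $\pi(e_J(\tilde{c})) = \bar{c}$.

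I do not anticipate any real obstacle; this is Maschke's theorem in concrete form, and the only input is the invertibility of $|W_J|$ in $\C$. An equivalent, even shorter formulation is the exactness of $(-)^{W_J}$ in characteristic zero: applying it to $0 \to I_W \to R \to C \to 0$ yields $0 \to I_W \cap R^J \to R^J \to C^J \to 0$, whose last arrow is exactly the surjection $\pi$.
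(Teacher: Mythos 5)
Your proof is correct, but it takes a genuinely different route from the paper's. You argue by pure semisimplicity: since $I_W$ is $W$-stable, the quotient $R\twoheadrightarrow C$ is $W_J$-equivariant, and averaging an arbitrary lift of $\bar c\in C^{W_J}$ over $W_J$ (Reynolds operator, valid since $|W_J|$ is invertible in $\C$) produces a $W_J$-invariant preimage; equivalently, $(-)^{W_J}$ is exact in characteristic zero. The paper instead proceeds constructively through the Demazure operators: for each $d\in W^J$ it exhibits the explicit element $\partial_J(\Delta_{w_Jd^{-1}w_0}D)\in R^J$ mapping to the Schubert class $X_d$, and then invokes the fact that $\{X_d\mid d\in W^J\}$ is a basis of $C^J$ (Lemma \ref{cj1}). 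Your argument is shorter, more elementary, and works for any finite group acting on a ring in which the group order is invertible, with any stable ideal; it needs none of the Schubert-calculus apparatus. What the paper's computation buys is explicit homogeneous preimages of the basis elements of $C^J$ expressed via $\partial_J$ and $\Delta_w$, which dovetails with the subsequent use of the same operators in Proposition \ref{keyprop1} (the basis $\{\Delta_w(\mu_J)\}$ of $C$ over $C^J$ and the Frobenius/adjunction structure). Both proofs are complete; yours is a clean alternative.
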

\begin{proof} Let $d\in W^J$. Recall that $D:=\frac{1}{|W|}\prod\limits_{\alpha\in \Phi^+}\alpha\in S(\mathfrak{h}^*)$. Let $\overline{D}$ be its image in $C$. By Lemma \ref{Dema}, we have that $$
\partial_J(\Delta_{w_Jd^{-1}w_0}\overline{D})=\Delta_{w_J}\Delta_{w_Jd^{-1}w_0}\overline{D}=
\Delta_{d^{-1}w_0}(\overline{D})=X_{d},
$$
which implies that $X_d$ is the natural image of $\partial_J(\Delta_{w_Jd^{-1}w_0}D)\in R^J$ in $C^J$. Applying Lemma \ref{cj1}, we can deduce that the natural map $\pi: R^J\rightarrow C^J$ is surjective.
\end{proof}

Let $\Phi_J$ be the sub-root system of $\Phi$ generated by $J$. Set $\Phi_J^+:=\Phi^+\cap\Phi_J$. We define $$
\mu_J:=\prod\limits_{\alpha\in \Phi_J^+}\alpha .
$$
The following proposition plays a key role in the study of $\Z$-graded translation functors in next section.

\begin{proposition}\label{keyprop1} Let $J\subset\Pi$ be a subset. Then $C$ is a free $C^J$-module with a basis $\{\Delta_w(\mu_J)+I_W|w\in W_J\}$. Moreover, The following are two pairs of adjoint graded functors: $$
\Bigl(\ind_{C^J}^C(-), \res_{C^J}^C(-)\Bigr),\quad \Bigl(\res_{C^J}^C(-), \ind_{C^J}^C(-)\<-2\ell(w_J)\>\Bigr) .
$$
\end{proposition}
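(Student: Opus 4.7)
The plan is to descend the two statements from the corresponding facts about $R$ over $R^J$ (Lemma \ref{2adjR}) to $C$ over $C^J$ via the surjection $R \twoheadrightarrow C$. In both steps, the key input will be Lemma \ref{linlem}, which says that $\pi: R^J \to C^J$ is surjective, together with the averaging argument showing that the kernel of $\pi$ is precisely $R^J \cdot R_+^W$ (if $f \in R^J \cap I_W$, write $f = \sum g_i h_i$ with $h_i \in R_+^W$, then average $f = \frac{1}{|W_J|}\sum_{w \in W_J} w(f) = \sum \tilde g_i h_i$ with $\tilde g_i \in R^J$).

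First I would establish the spanning part: given $x \in C$, lift to $\tilde x \in R$ and expand $\tilde x = \sum_{w \in W_J} r_w \Delta_w(\mu_J)$ with $r_w \in R^J$ by Lemma \ref{2adjR}(1); then $x = \sum_{w \in W_J} \pi(r_w)\cdot(\Delta_w(\mu_J) + I_W)$, which shows that $\{\Delta_w(\mu_J) + I_W\mid w \in W_J\}$ generates $C$ as a $C^J$-module. For $C^J$-linear independence I would use a dimension count. By Chevalley, $\dim_\C C = |W|$ and $R^J$ is free of rank $|W^J|$ over $R^W$, so $\dim_\C C^J = \dim_\C R^J / R^J\cdot R_+^W = |W^J|$. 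Since $|W_J|\cdot|W^J| = |W|$, the surjection $(C^J)^{|W_J|} \twoheadrightarrow C$ sending standard basis vectors to $\Delta_w(\mu_J) + I_W$ has equal $\C$-dimension on source and target, hence is an isomorphism. Thus $C$ is free over $C^J$ with the claimed basis.

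For the first adjoint pair $(\ind_{C^J}^C, \res_{C^J}^C)$, this is the standard tensor-Hom adjunction in the graded setting, which poses no difficulty once induction is understood as $C \otimes_{C^J}(-)$. For the second adjoint pair, I would show that $C$ inherits the structure of a graded Frobenius $C^J$-algebra with trace form of degree $-2\ell(w_J)$. Concretely, the map $\partial_J: R \to R^J$ is $R^J$-linear, and since $I_W = R\cdot R_+^W$ with $R_+^W \subseteq R^J$, one has $\partial_J(I_W) \subseteq R^J \cdot R_+^W = \ker\pi$, so $\partial_J$ descends to a homogeneous map $\bar\partial_J: C \to C^J$ of degree $-2\ell(w_J)$. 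The dual basis $\{b_w\}_{w \in W_J}$ from Lemma \ref{2adjR}(3) descends to a dual basis of $\{\Delta_w(\mu_J) + I_W\}_{w \in W_J}$ in $C$ with respect to the form $(\bar f, \bar g) \mapsto \bar\partial_J(\bar f \bar g)$, giving non-degeneracy. Then, exactly as in the proof of Lemma \ref{2adjR}(3), the comultiplication $\delta: C \to C \otimes_{C^J} C$ sending $1 \mapsto \sum_{w \in W_J}(\Delta_w(\mu_J) + I_W)\otimes b_w$ is homogeneous of degree $2\ell(w_J)$ and induces the second adjoint pair $(\res_{C^J}^C, \ind_{C^J}^C\<-2\ell(w_J)\>)$.

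The main obstacle I anticipate is Step 1, specifically making the dimension count clean. The descent of the Frobenius structure in Step 3 is formal provided one is careful with the degree shift conventions, and the first adjunction is entirely routine. Once freeness is established with the explicit basis, the rest follows the template of Lemma \ref{2adjR}.
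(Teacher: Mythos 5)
Your argument is correct, and the freeness part is essentially identical to the paper's proof: spanning of $C$ over $C^J$ by the image of $\{\Delta_w(\mu_J)\}$ via Lemma \ref{linlem} and the $R^J$-freeness of $R$, followed by the dimension count $|W|=|W_J|\,|W^J|$ (the paper gets $\dim_\C C^J=|W^J|$ directly from Lemma \ref{cj1}, whereas you rederive it from $R^J\cap I_W=R^J\cdot R_+^W$ and freeness of $R^J$ over $R^W$; your averaging argument for the kernel is a nice explicit supplement that the paper leaves implicit).

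For the second adjunction the two proofs diverge in mechanism, though both rest on Lemma \ref{2adjR} and the freeness just established. The paper keeps the Frobenius structure upstairs: it reduces the desired isomorphism $\ho_{C^J}(\res M,N)\cong\ho_C(M,\ind N\<-2\ell(w_J)\>)$ to the already-proved $R/R^J$ adjunction by showing $C\otimes_{C^J}N\cong R\otimes_{R^J}N$ as left $R$-modules (a surjection between spaces of equal dimension, using that $R$ and $C$ have the same rank $|W_J|$ over $R^J$ and $C^J$ respectively). You instead descend the Frobenius structure itself: $\partial_J$ is $R^J$-linear and kills $I_W=R\cdot R_+^W$ into $\ker\pi$, so it induces $\bar\partial_J:C\to C^J$ of degree $-2\ell(w_J)$, the dual basis descends, and the comultiplication $1\mapsto\sum_w\overline{\Delta_w(\mu_J)}\otimes \bar b_w$ of degree $2\ell(w_J)$ furnishes the unit of the adjunction exactly as in the proof of Lemma \ref{2adjR}. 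Your route is slightly longer but yields the stronger and reusable statement that $(\iota_J,\bar\partial_J)$ makes $C$ a graded Frobenius algebra over $C^J$; the paper's route is shorter because it never needs to verify that the Frobenius data survive the quotient. Both are complete, with the usual caveat that your non-degeneracy claim implicitly uses that the images of the $b_w$ also generate $C$ over $C^J$ (which follows by the same spanning-plus-dimension argument).
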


\begin{proof} Let $\pi: R\twoheadrightarrow C$ be the natural surjection. By Lemma \ref{linlem}, $\pi$ restricts a surjection $\pi_J: R^J\twoheadrightarrow C^J$.
By \cite[Lemma 24.35]{EMTW}, $R$ is a free $R^J$-module with a basis $B_J:=\{\Delta_w(\mu_J)|w\in W_J\}$. Applying Lemma \ref{linlem}, we can deduce that $C$ is generated by the image of $B_J$ in $C$ as a $C^J$-module. On the other hand, since $\dim_\C C=|W|$, $\dim_\C C^J=|W^J|$ and $|W|=|W_J||W^J|$, it follows that the image of $B_J$ in $C$ must be a $C^J$-basis of $C$. In particular, $C$ is a free $C^J$-module.

It remains to prove that two pairs of graded functors are adjoint pairs. The first pair is clearly a pair of adjoint graded functors. We only need to consider the second pair of graded functors. Let $M\in C\lgmod, N\in C^J\lgmod$. We want to prove there is a graded isomorphism \begin{equation}\label{bi0}
\ho_{C^J}\bigl(\res^C_{C^J}M,N\bigr)\cong\ho_{C}\bigl(M,\ind_{C^J}^C N\<-2\ell(w_J)\>\bigr) .
\end{equation}

In view of Lemma \ref{2adjR}, it suffices to show that $C\otimes_{C^J}N\cong R\otimes_{R^J}N$ as left $R$-modules. There is a natural surjection $\pi_0$ from
$R\otimes_{R^J}N$ onto $C\otimes_{C^J}N$. Since the free $R^J$-module $R$ has rank $|W_J|$, which is the same as the rank of the free $C^J$-module $C$, it follows that two $\C$-linear spaces $R\otimes_{R^J}N$ and $C\otimes_{C^J}N$ have the same dimensions. Hence $\pi_0$ is an isomorphism. The proposition then follows.
\end{proof}

For any $M\in C^J\lgmod$, we define $M^*\in C^J\lgmod$ as follows: $M^*:=\Hom_{\C}(M,\C)$ and $(af)(x):=f(ax)$, $\forall\,a\in C^J, x\in M, f\in M^*$. This is well-defined because $C^J$ is commutative. As a $\C$-linear space,
$M^{\ast}_k:=\Hom_{\C}(M_{-k},\C)$, where $\C$ is regarded a $\Z$-graded space concentrated in degree $0$. We use $\ast$ to denote the duality functor on both the category $\O$ (denoted by $\vee$ in \cite[\S3.2]{Hum}) and $C^J\lgmod$.

\begin{corollary}\label{sd} Let $\lam$ be an integral dominant weight. For any $M\in\O_\lam$, we have $\mathbb{V}_\lam(M^*)\cong (\mathbb{V}_\lam M)^*$.
\end{corollary}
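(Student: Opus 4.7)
The plan is to combine BGG duality on $\O_\lam$ with the symmetric algebra structure on $C_\lam$ provided by Theorem~\ref{tr}. First, since $P:=P(w_0\cdot\lam)$ is the unique indecomposable projective-injective module in $\O_\lam$, and BGG duality swaps projective covers with injective envelopes while fixing simples, one has $P^*\cong P$. The standard hom-duality for $*$ then yields, for any $M\in\O_\lam$,
$$\mathbb{V}_\lam(M^*)=\Hom_\O(P,M^*)\cong\Hom_\O(M,P^*)\cong\Hom_\O(M,P),$$
a chain of natural isomorphisms of $C_\lam$-modules (using commutativity of $C_\lam$ from Lemma~\ref{c0} to absorb the side-swap induced by contravariance). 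The remaining task is to identify $\Hom_\O(M,P)$ with $(\mathbb{V}_\lam M)^*$ as $C_\lam$-modules.

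The next step is to establish $\Hom_\O(M,P)\cong\Hom_{C_\lam}(\mathbb{V}_\lam M,C_\lam)$ for arbitrary $M\in\O_\lam$. For projective $M$ this is immediate from Definition and Theorem~\ref{ff} together with $\mathbb{V}_\lam(P)=\End_\O(P)=C_\lam$. To extend to general $M$, I would take a projective presentation $P_1\to P_0\to M\to 0$, apply the left-exact functor $\Hom_\O(-,P)$, and compare with the left-exact sequence obtained by first applying the exact functor $\mathbb{V}_\lam$ and then $\Hom_{C_\lam}(-,C_\lam)$. The natural comparison map induced by $\mathbb{V}_\lam$ is an isomorphism on the two rightmost terms by the projective case, hence also on the kernels.

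Finally, Theorem~\ref{tr} makes $C_\lam=C^{W_\lam}$ into a symmetric $\C$-algebra via the homogeneous symmetrizing form $\tr_J$, so $C_\lam\cong C_\lam^*$ as $C_\lam$-bimodules, where $C_\lam^*$ denotes the $\C$-linear dual with the natural bimodule structure. The hom-tensor adjunction then gives, for any finite-dimensional $C_\lam$-module $N$,
$$\Hom_{C_\lam}(N,C_\lam)\cong\Hom_{C_\lam}(N,C_\lam^*)\cong\Hom_\C(N,\C)=N^*.$$
Applying this with $N=\mathbb{V}_\lam M$ and concatenating the isomorphisms above produces $\mathbb{V}_\lam(M^*)\cong(\mathbb{V}_\lam M)^*$, as required.

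The main obstacle I expect is the bookkeeping of $C_\lam$-module structures throughout: BGG duality is contravariant, so the self-duality $P\cong P^*$ a priori only induces an \emph{anti}-isomorphism of $\End_\O(P)=C_\lam$, and one must verify that this anti-isomorphism, composed with the various adjunctions and with the symmetric-algebra isomorphism $C_\lam\cong C_\lam^*$, matches the $C_\lam$-action on $N^*$ defined in the paper. The commutativity of $C_\lam$ (coming from its realization as a parabolic invariant coinvariant algebra) is precisely what makes all these left/right and covariant/contravariant compatibilities trivial; without it the argument would only yield an isomorphism of $\C$-vector spaces.
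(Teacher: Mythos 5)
Your proof is correct and is essentially the paper's approach: the paper's proof simply says that, using the trace form of Theorem \ref{tr}, one repeats the argument of \cite[Lemma 8]{So}, and that argument is exactly what you have written out (self-duality of $P(w_0\cdot\lam)$ under the BGG duality, reduction to projectives via a presentation and fully-faithfulness on projectives, and the symmetric-algebra identification $\Hom_{C_\lam}(N,C_\lam)\cong N^*$). One small caveat: commutativity of $C_\lam$ by itself only handles the left/right issue, not the possible twist by the automorphism of $\End_\O(P(w_0\cdot\lam))$ induced by the contravariant duality; that automorphism is in fact the identity because the $C_\lam$-action factors through the center of $U(\fg)$, which is fixed pointwise by the anti-involution $\tau$ defining the duality.
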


\begin{proof} Using the trace form $\tr$ introduced in Theorem \ref{tr}, the corollary follows from the same argument used in the proof of \cite[Lemma 8]{So} (where it deals with only the case when $\lam=0$).
\end{proof}


For each $\alpha\in\Phi$, let $\mathfrak{g}_\alpha$ be the root space of $\mathfrak{g}$ corresponding to $\alpha$. Let $\tau$ be the standard anti-involution of $\fg$ which exchanges $\mathfrak{g}_\alpha$ with $\mathfrak{g}_{-\alpha}$ for all $\alpha\in\Phi^+$ and fixes all $h\in\fh$. Then $\tau$ induces an equivalence between $\rmod A_\lam$ and $A_\lam\lmod$ and hence between $\rmod A_\lam$ and $\rmod A_\lam^{\rm{op}}$.
Recall that $w_{0,\lam}$ is the unique longest element in $W_\lam$.

\begin{lemma}\label{selfdualC} Let $\lam$ be an integral dominant weight and $x\in W^\lam$. Then with the natural $\Z$-grading (arising from the Koszul grading on $\O_\lam$) on $\mathbb{V}_\lam P(x\cdot\lam)$ one has that $(\mathbb{V}_\lam P(x\cdot\lam))\<-\ell(w_0)+\ell(w_{0,\lam})\>$ is a graded self-dual $C_\lam$-module.
\end{lemma}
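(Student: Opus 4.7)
Set $D := 2(\ell(w_0)-\ell(w_{0,\lam}))$, write $N := \mathbb{V}_\lam P(x\cdot\lam)$ with its Koszul grading, and denote by $\ast$ the graded duality both on $\O_\lam^\Z$ and on $C_\lam\lgmod$. The lemma asserts that $N\<-D/2\>$ is graded self-dual, which is equivalent to the graded $C_\lam$-module isomorphism
$$
N^\ast\cong N\<-D\> ,
$$
since shifting both sides by $-D/2=-\ell(w_0)+\ell(w_{0,\lam})$ yields the desired self-duality.

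The plan is first to lift Corollary~\ref{sd} to the graded setting. Soergel's proof of the ungraded isomorphism $\mathbb{V}_\lam(M^\ast)\cong(\mathbb{V}_\lam M)^\ast$ (following \cite[Lemma~8]{So}) builds the comparison map out of a perfect pairing arising from the trace form on $C_\lam$. By Theorem~\ref{tr} this trace is homogeneous of degree $-D$, and replaying Soergel's argument with gradings tracked throughout produces a homogeneous degree-zero isomorphism in the graded setting. Applied to the graded projective $\mP(x\cdot\lam)$, whose $\ast$-dual in $\O_\lam^\Z$ is the graded injective hull $\mathbb{I}(x\cdot\lam)$ of $\mL(x\cdot\lam)$ (normalised to have socle in degree $0$), this yields $\mathbb{V}_\lam\mathbb{I}(x\cdot\lam)\cong N^\ast$ as graded $C_\lam$-modules.

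The remaining task is to establish $\mathbb{V}_\lam\mathbb{I}(x\cdot\lam)\cong N\<-D\>$. When $x=w_0$ this is precisely the graded Frobenius self-duality $C_\lam\cong C_\lam^\ast\<D\>$ coming directly from Theorem~\ref{tr}. For general $x\in W^\lam$, both $\mathbb{V}_\lam\mP(x\cdot\lam)$ and $\mathbb{V}_\lam\mathbb{I}(x\cdot\lam)$ are indecomposable graded $C_\lam$-modules (via Corollary~\ref{indec} and Corollary~\ref{bs}), and both occur as graded indecomposable summands of the Bott--Samelson module~(\ref{BS}) for a reduced expression of $x$, by Theorem~\ref{BS0} and its injective analogue. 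Iterating the graded Frobenius extension structure supplied by Lemma~\ref{2adjR} and Proposition~\ref{keyprop1} equips this Bott--Samelson module with a graded self-duality; Krull--Schmidt uniqueness then forces the shift $-D$ on the summand $N$, independently of the reduced expression chosen.

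The main obstacle lies in this second step: the total self-duality shift of the full Bott--Samelson module a priori depends on the length of the reduced expression, so the uniform shift $-D$ for the $\mathbb{V}_\lam\mP(x\cdot\lam)$-summand has to be pinned down by matching the projective and injective Bott--Samelson decompositions summand by summand. A cleaner alternative uses the graded indecomposable tilting module $\mathbb{T}(x\cdot\lam)$, which is itself graded self-dual in the Koszul grading: Soergel's theory identifies $\mathbb{V}_\lam\mathbb{T}(x\cdot\lam)$, $\mathbb{V}_\lam\mP(x\cdot\lam)$ and $\mathbb{V}_\lam\mathbb{I}(x\cdot\lam)$ with a common indecomposable Soergel $C_\lam$-module up to shifts $0$, $+D/2$ and $-D/2$ respectively, from which the graded self-duality of $N\<-D/2\>$ is immediate.
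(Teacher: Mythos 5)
Your overall strategy coincides with the paper's up to the point where the real work begins: both arguments reduce the lemma to producing an isomorphism $(\mathbb{V}_\lam P(x\cdot\lam))^*\cong \mathbb{V}_\lam P(x\cdot\lam)\<-k\>$ (which follows from the ungraded duality statement plus indecomposability) and then to proving that the shift $k$ equals $2(\ell(w_0)-\ell(w_{0,\lam}))$. It is precisely this last step that your proposal does not actually carry out. Your first route concedes the obstacle yourself: the Bott--Samelson module (\ref{BS}) for a reduced expression of $x$ is self-dual only up to a shift depending on $\ell(x)$, not on $2(\ell(w_0)-\ell(w_{0,\lam}))$, and the grading it induces on the summand $\mathbb{V}_\lam P(x\cdot\lam)$ agrees with the Koszul grading only up to an undetermined shift; saying the shift ``has to be pinned down by matching the projective and injective decompositions summand by summand'' names the problem without solving it. Your second route simply asserts that $\mathbb{V}_\lam\mathbb{T}(x\cdot\lam)$, $\mathbb{V}_\lam\mP(x\cdot\lam)$ and $\mathbb{V}_\lam\mathbb{I}(x\cdot\lam)$ are a common self-dual module shifted by $0$, $+D/2$, $-D/2$; that assertion \emph{is} the lemma (plus its injective twin), so nothing has been proved. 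There is also a circularity worry specific to this paper's development: the graded duality $\circledast$ on $\rgmod A_\lam$ and the graded self-duality of tilting modules are consequences of this lemma (via Corollary \ref{idem0} and Definition \ref{duality0}), so they cannot be used as inputs here. Similarly, your claim that Soergel's argument for Corollary \ref{sd} ``replays'' to give a degree-zero graded isomorphism needs care, since the trace form it relies on is homogeneous of degree $-D$, which is exactly where a shift can enter.

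For comparison, the paper pins down $k$ by an elementary two-sided degree estimate: it fixes a homogeneous $f\in\Hom_\O(P(w_0\cdot\lam),P(x\cdot\lam))$ of maximal degree $m$, observes via the symmetrizing form of Theorem \ref{tr} (degree $-2(\ell(w_0)-\ell(w_{0,\lam}))$) that $f^*$ is a nonzero homogeneous map $\mathbb{V}_\lam(P(x\cdot\lam))\<-k\>\to C_\lam\<-2(\ell(w_0)-\ell(w_{0,\lam}))\>$ of the same degree $m$, and then uses the maximality of $m$ together with the symmetry of the graded Cartan numbers to get $k-2(\ell(w_0)-\ell(w_{0,\lam}))\geq 0$; swapping the roles of $P(x\cdot\lam)$ and $P(w_0\cdot\lam)$ gives the reverse inequality. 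Some argument of this concrete, quantitative kind (or a genuine summand-by-summand comparison with all shifts tracked) is what is missing from your proposal.
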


\begin{proof} Forgetting the grading, we have a $(C_\lam,A_\lam)$-bimodule isomorphism $$
\bigl((\mathcal{F}(P(w_0\cdot\lam))^\tau\bigr)^*\cong\mathcal{F}(P(w_0\cdot\lam)) .
$$
Applying the hom functor $\Hom_{A_\lam}(\mathcal{F}(P(w_0\cdot\lam)),-)$ to $\mathcal{F}(P(x\cdot\lam))$, we get that
$$\begin{aligned}
&\quad\,\mathbb{V}_\lam P(x\cdot\lam)\\
&\cong\Hom_{A_\lam}\bigl(\mathcal{F}(P(w_0\cdot\lam)),\bigl((\mathcal{F}(P(x\cdot\lam))^\tau\bigr)^*\bigr)\\
&\cong
\Hom_{A_\lam}\bigl((\mathcal{F}(P(x\cdot\lam))^\tau,\mathcal{F}(P(w_0\cdot\lam))^\tau\bigr)\\
&=\Hom_{A_\lam}\bigl(\mathcal{F}(P(x\cdot\lam)),\mathcal{F}(P(w_0\cdot\lam)\bigr)\\
&\cong \Hom_{C_\lam}\bigl(\mathbb{V}P(x\cdot\lam),\mathbb{V}P(w_0\cdot\lam\bigr)=\Hom_{C_\lam}\bigl(\mathbb{V}P(x\cdot\lam),C_\lam\bigr)
\cong(\mathbb{V}P(x\cdot\lam))^*.\end{aligned}
$$
It follows that as a graded $A_\lam$-module, $$
(\mathbb{V}_\lam P(x\cdot\lam))^*\cong \mathbb{V}_\lam P(x\cdot\lam)\<-k\> .
$$
for some $k\in\Z$.

We define $$
m:=\max\Biggl\{d\geq 0\Biggm|\begin{matrix}\text{$f\in\mathbb{V}_\lam P(x\cdot\lam)=\Hom_{\O}(P(w_0\cdot\lam),P(x\cdot\lam))$}\\
 \text{is homogeneous, $d=\deg f$}\end{matrix}\Biggr\}.
$$
In fact, we know that $m=\ell(w_0)-\ell(w_{0,\lam})+\ell(x)$ from Corollary \ref{gradedlength2}, but we do not need this in the following proof.

Since the socle of $P(x\cdot\lam)$ is a direct sum of some copies of $L(w_0\cdot\lam)$. We can fix a homogeneous map $f\in\Hom_{\O}(P(w_0\cdot\lam),P(x\cdot\lam))$ of deg $m$. Then $$
f\in\Hom_{\O}(P(w_0\cdot\lam),P(x\cdot\lam))\cong\Hom_{C_\lam}(C_\lam,\mathbb{V}(P(x\cdot\lam))).
$$
It follows from Theorem \ref{tr} that $$0\neq f^*\in\Hom_{C_\lam}(\mathbb{V}(P(x\cdot\lam))\<-k\>,C_\lam\<-2(\ell(w_0)-\ell(w_{0,\lam}))\>).$$ Note that $\deg f^*=\deg f=m$.
It follows from the maximality of $m$ and the symmetry of the graded Cartan numbers that $k-2(\ell(w_0)-\ell(w_{0,\lam}))\geq 0$.

Swapping the position of $P(x\cdot\lam))$ and $P(w_0\cdot\lam)$, we can also show that $k-2(\ell(w_0)-\ell(w_{0,\lam}))\leq 0$. Thus we can deduce that
$k-2(\ell(w_0)-\ell(w_{0,\lam}))=0$ as required.
\end{proof}

Recall that the anti-involution $\tau$ induces an equivalence between $\rmod A_\lam$
and $A_\lam\lmod$ and hence between $\rmod A_\lam$ and $\rmod A_\lam^{\rm{op}}$. Since both $A_\lam$ and $A_\lam^{\rm{op}}$ are basic algebra, this gives rise to an isomorphism $\tau: A_\lam\cong A_\lam^{\rm{op}}$ with $\tau^2=\id$, and hence an anti-involution $\tau$ of $A_\lam$.

Following \cite[(6.1)]{Str}, we have $$
A_\lam  =\End_{\O}(P_\lam)\cong \End_{C_\lam}((\mathbb{V}_\lam P_\lam)^*)\cong\Bigl(\End_{\O}(P_\lam)\Bigr)^{\rm{op}}=
A_\lam^{\rm{op}}.
$$
As a result, we get a degree $0$ homogeneous $\Z$-graded anti-involution  of $A_\lam$, which will be still denoted by $\tau$.

For each $x\in W^\lam=W^J$, let $e_x$ be the projection from $\mathbb{V}_\lam P_\lam$ onto $\mathbb{V}_\lam P(x\cdot\lam)$. Then $e_x$ is a degree $0$ homogeneous primitive idempotent which corresponds to the indecomposable projective module $\mP(x\cdot\lam)$.

\begin{corollary}\label{idem0} The set $\{e_x|x\in W^\lam\}$ is a unique complete set of pairwise orthogonal degree $0$ primitive idempotents of $A_\lam$. In particular, in the Koszul dual picture of Theorem \ref{koszul}, $e_x$ corresponds to the projection from $\oplus_{x\in W^J}L(w_Jx^{-1}w_0\cdot 0)$ onto $L(w_Jx^{-1}w_0\cdot 0)$, where $J$ is as defined in (\ref{ij1}). Moreover, the $\Z$-graded anti-involution $\tau$ of $A_\lam$ coincides with the $\Z$-graded anti-involution on the Ext Yoneda algebra appeared in Theorem \ref{koszul} induced from the self-duality of each simple module $L(w_Jx^{-1}w_0\cdot 0)$ in $\O_0^\lam$.
\end{corollary}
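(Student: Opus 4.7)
The plan is to treat the three assertions in sequence. For the first claim, each $e_x$ is homogeneous of degree $0$ by construction since $\mathbb{V}_\lam P_\lam=\oplus_{x\in W^\lam}\mathbb{V}_\lam P(x\cdot\lam)$ is a decomposition of graded $C_\lam$-modules, and the $e_x$ are pairwise orthogonal and sum to $\id$. Each $e_x$ is primitive because, via Definition and Theorem~\ref{ff} and Corollary~\ref{indec}, $\End_{A_\lam}(A_\lam e_x)\cong\End_{C_\lam}(\mathbb{V}_\lam P(x\cdot\lam))\cong\End_\O(P(x\cdot\lam))$ is a local ring. For uniqueness, Corollary~\ref{p2p} gives $\ho_{A_\lam}(\mathbb{P}(x\cdot\lam),\mathbb{P}(y\cdot\lam))=\delta_{x,y}\C\,\id$, hence the degree zero component satisfies $(A_\lam)_0\cong\prod_{x\in W^\lam}\C$; the complete sets of pairwise orthogonal primitive idempotents in this commutative semisimple algebra are unique and coincide with $\{e_x\}_{x\in W^\lam}$.

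For the Koszul dual description, I would apply the equivalence $K$ of Theorem~\ref{koszul} (with $\mu=0$, so $I=\emptyset$) to the indecomposable projective $\mathbb{P}(x\cdot\lam)=A_\lam e_x$. By Theorem~\ref{koszul}, $K$ sends indecomposable projective graded $A_\lam$-modules to simple graded $A_0^\lam$-modules; so $K(\mathbb{P}(x\cdot\lam))\cong L(w\cdot 0)\<k\>$ for a unique $w\in{}^JW$ and $k\in\Z$. To pin down the label $w=w_Jx^{-1}w_0$, I would compare the graded multiplicity formula for $[\mathbb{\Delta}(x\cdot\lam)]$ in Theorem~\ref{2cases}(1) with the graded composition series formula for $[\mathbb{\Delta}^\mu(w_Iyw_0\cdot 0)]$ in Theorem~\ref{2cases}(2) applied to $A_0^\lam$ (with $I=J$ there playing the role of our $J$): the matching of Kazhdan--Lusztig polynomials $P_{xw_0,yw_0}$ on the two sides forces precisely the bijection $x\mapsto w_Jx^{-1}w_0$ of index sets, and regularity of the labelled idempotent under $K$ then yields the stated projection description.

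For the compatibility of anti-involutions, I would first note that $\tau(e_x)=e_x$ for every $x\in W^\lam$: Lemma~\ref{selfdualC} tells us $(\mathbb{V}_\lam P(x\cdot\lam))^{\ast}\cong\mathbb{V}_\lam P(x\cdot\lam)\<-2(\ell(w_0)-\ell(w_{0,\lam}))\>$, so $\tau$ fixes each isomorphism class of indecomposable graded projective, and by the uniqueness established in the first paragraph it must fix each degree $0$ primitive idempotent $e_x$. The same argument on the Koszul dual side, using the self-duality $L(w_Jx^{-1}w_0\cdot 0)^{\vee}\cong L(w_Jx^{-1}w_0\cdot 0)$ of simple modules, shows that the induced anti-involution on $\Ext^\bullet_{\O_0^\lam}(\bigoplus L,\bigoplus L)$ also fixes each primitive idempotent. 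It then remains to compare the two anti-involutions on each piece $e_yA_\lam e_x\cong\Ext^\bullet_{\O_0^\lam}(L_x,L_y)$; via Corollary~\ref{sd} and the naturality of the Koszul functor $K$, both anti-involutions are induced from the same $\vee$-duality (respectively $\ast$-duality under $\mathbb{V}_\lam$) on the category $\O$, so they must agree.

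The main obstacle will be the third step: identifying the algebraic anti-involution $\tau$ of $A_\lam$ (defined via $\mathbb{V}_\lam$ and the isomorphism $A_\lam\cong A_\lam^{\mathrm{op}}$ in the displayed calculation preceding Corollary~\ref{idem0}) with the geometric/Koszul anti-involution coming from $L\cong L^{\vee}$. The previous two steps are essentially bookkeeping, but the compatibility statement needs a genuine check that Koszul duality intertwines $\vee$-duality on $\O_\lam$ with $\vee$-duality on $\O_0^\lam$, for which I would invoke the self-duality properties of the Koszul functor and Corollary~\ref{sd}.
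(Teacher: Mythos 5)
Your first paragraph is precisely the paper's entire proof: the paper disposes of the whole corollary with the single line ``This follows from Corollary \ref{p2p},'' and the content of that citation is exactly your observation that $\ho_{A_\lam}(\mP(x\cdot\lam),\mP(y\cdot\lam))_0=\delta_{x,y}\C\,\id$, hence $(A_\lam)_0\cong\prod_{x\in W^\lam}\C$ and the complete set of degree $0$ orthogonal primitive idempotents is unique. So for the first assertion you and the paper coincide. Where you differ is that you actually attempt the second and third assertions, which the paper treats as immediate once uniqueness is known: any other complete set of degree $0$ orthogonal primitive idempotents (in particular the projections onto the simple summands of the Yoneda algebra) must equal $\{e_x\}$ as a set, and the labelling and the anti-involution statement are then regarded as bookkeeping. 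Your second paragraph (pinning down $w=w_Jx^{-1}w_0$ by matching the Kazhdan--Lusztig data of Theorem \ref{2cases} on the two sides of Koszul duality) and your third paragraph (showing $\tau$ fixes each $e_x$ via Lemma \ref{selfdualC} and then comparing the two anti-involutions on each $e_yA_\lam e_x$ via Corollary \ref{sd} and the compatibility of $K$ with the dualities) are reasonable and, if anything, more honest than the paper: the coincidence of $\tau$ with the Ext-algebra anti-involution does not follow from Corollary \ref{p2p} alone (two degree $0$ anti-involutions fixing the same idempotents need not agree in higher degrees), so the genuine check you flag as ``the main obstacle'' is a real gap in the paper's one-line proof that your plan at least identifies and outlines how to fill. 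The only caveat is that your third step remains schematic --- you would still need to verify that Koszul duality intertwines the duality $\ast$ on $\O_\lam$ with the self-duality of simples in $\O_0^\lam$ --- but as a proof plan it is sound and strictly more informative than what the paper provides.
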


\begin{proof} This follows from Corollary \ref{p2p}.
\end{proof}

\begin{definition}\label{duality0} Let $\circledast$ be the duality functor on $\rgmod A_\lam$ defined as follows: for any $M\in \rgmod A_\lam$, $M^{\circledast}=M^*=\Hom_{\C}(M,\C)$ as a $\C$-linear space, and $(fa)(x):=f(x\tau(a))$, $\forall\,f\in M^*, a\in A_\lam, x\in M$. As a $\C$-linear space,
$M^{\circledast}_k:=\Hom_{\C}(M_{-k},\C)$, where $\C$ is regarded a $\Z$-graded space concentrated in degree $0$.
Similarly, we can define a duality functor $\circledast$ on $A_\lam\lgmod$.
\end{definition}

\begin{remark} In \cite[\S6.1.2]{Str}, Stroppel defined a duality functor by $M^{\circledast}:=\Hom_{A_\lam}(M,A_\lam)$. However, this is incorrect because with this definition $L^{\circledast}=0$ whenever $L$ is a simple module which is not isomorphic to $\mathcal{F}(L(w_0\cdot\lam))$.
\end{remark}




\section{Graded lift of translation functors}

Translation functors are some special kind of projective functors and play some important role in the study of the BGG category $\O$. In this section, we shall
introduce and study the graded lift of translation functors, generalizing the earlier work \cite{Str} of Stroppel in the semiregular case to the general situation.

Let $\lam$ be an integral dominant weight. We use $\pr_\lam$ to denote the projection functor from $\O$ to $\O_\lam$. For each $\mu\in\Lam$, we use $\overline{\mu}$ to denote the unique weight in $W\mu\cap\Lam^+$. We consider the translation functor $\theta_\lam^{\rm{on}}: \O_0\rightarrow\O_\lam$, and its adjoint $\theta_\lam^{\rm{out}}:\O_\lam\rightarrow\O_0$. By definition, for any $M\in\O_0, N\in\O_\lam$, $$
\theta_\lam^{\rm{on}}(M):=\pr_\lam\bigl(L(\overline{\lam})\otimes M\bigr),\quad
\theta_\lam^{\rm{out}}(N):=\pr_0\bigl(L(\overline{-\lam})\otimes N\bigr),
$$

\begin{lemma}\text{(\cite[Lemma 2.5]{Bac},\cite[Chapetr 7]{Hum},\cite{J2},\cite{J3})}\label{lift0}  Let $\lam$ be an integral dominant weight and $x\in W^\lam$, \begin{enumerate}
\item[1)] both $\theta_\lam^{\rm{on}}$ and $\theta_\lam^{\rm{out}}$ are exact, commute with direct sum, duality functor and send projectives to projectives;
\item[2)] there are two adjoint pairs of functors: $(\theta_\lam^{\rm{on}},\theta_\lam^{\rm{out}})$, $(\theta_\lam^{\rm{out}},\theta_\lam^{\rm{on}})$;
\item[3)] $\theta_\lam^{\rm{on}}\bigl(L(xw_{0,\lam}\cdot 0)\bigr)\cong L(x\cdot\lam)$, $\theta_\lam^{\rm{on}}\bigl(\Delta(y\cdot 0)\bigr)\cong \Delta(y\cdot\lam)$ for any $y\in W$ and $\theta_\lam^{\rm{on}}\bigl(L(y\cdot 0)\bigr)=0$ whenever
$y\notin W^\lam w_{0,\lam}$;
\item[4)] $\theta_\lam^{\rm{out}}\bigl(P(x\cdot\lam)\bigr)\cong P(xw_{0,\lam}\cdot 0)$, $\theta_\lam^{\rm{out}}\bigl(L(x\cdot\lam)\bigr)$ is self-dual and has both the simple head and simple socle which is isomorphic to $L(xw_{0,\lam}\cdot 0)$.\end{enumerate}
\end{lemma}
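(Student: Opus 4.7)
The plan is to follow the classical Jantzen--Humphreys approach and deduce the four parts in cascade, with Part 3 being the main engine.

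For Parts 1 and 2 I would argue formally. Exactness of both functors is immediate: tensoring with the finite-dimensional $\fg$-module $L(\overline{\lam})$ is exact, the block projection $\pr_\lam$ is an exact direct summand, and both operations commute with direct sums. To see that $V\otimes -$ sends projectives in $\O$ to projectives for any finite-dimensional $V$ I would use the tensor--hom adjunction $\Hom_\fg(V\otimes P,-)\cong\Hom_\fg(P,V^*\otimes -)$: since $V^*\otimes -$ is exact, the left-hand side is exact in the second argument whenever $P$ is projective. The two adjoint pairs in Part 2 come from the same tensor--hom adjunction, using the isomorphism $L(\overline{\lam})^*\cong L(\overline{-\lam})$ and restricting along the exact block projections. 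Compatibility with duality in Part 1 follows from the natural isomorphism $(V\otimes M)^*\cong V^*\otimes M^*$ for finite-dimensional $V$ (the duality on $\O$ being the restricted linear dual twisted by the Chevalley involution), together with $L(\overline{\lam})^*\cong L(\overline{-\lam})$ and the fact that the duality preserves each block of $\O$.

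Part 3 is the main technical content and the step I expect to require the most care. The key input is the standard $\Delta$-flag of $\Delta(y\cdot 0)\otimes L(\overline{\lam})$ (see \cite[Theorem 3.6]{Hum}): its successive subquotients are the Verma modules $\Delta(y\cdot 0+\nu)$, each with multiplicity $\dim L(\overline{\lam})_\nu$, as $\nu$ ranges over the weights of $L(\overline{\lam})$. Applying $\pr_\lam$ retains only those subquotients with $y\cdot 0+\nu\in W\cdot\lam$. A short analysis using that the weights of $L(\overline{\lam})$ lie in the convex hull of $W\lam$ and are congruent to $\lam$ modulo the root lattice shows that for each fixed $y$ the only such weight is the extremal weight $y\lam$, which occurs with multiplicity one; this yields $\theta_\lam^{\rm{on}}\Delta(y\cdot 0)\cong\Delta(y\cdot\lam)$. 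For the simple-module statements I would argue by induction on the Bruhat order: $\theta_\lam^{\rm{on}}$ is exact and $\Delta(y\cdot\lam)\cong\Delta(y'\cdot\lam)$ whenever $y'\in yW_\lam$, so the composition factors of $\theta_\lam^{\rm{on}} L(y\cdot 0)$ are controlled by those of the common Verma image of the coset $xW_\lam$. Counting composition multiplicities in the Grothendieck group and using that $xw_{0,\lam}$ is the unique longest element of $xW_\lam$, so that $L(xw_{0,\lam}\cdot 0)$ occurs as a composition factor of $\Delta(y\cdot 0)$ for $y\in xW_\lam$ only when $y=xw_{0,\lam}$ (where it occurs once, at the head), one deduces $\theta_\lam^{\rm{on}} L(xw_{0,\lam}\cdot 0)\cong L(x\cdot\lam)$ and $\theta_\lam^{\rm{on}} L(y\cdot 0)=0$ otherwise.

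Finally, Part 4 follows from Parts 1--3 by a formal adjunction argument. For any $z\in W$ the adjunction of Part 2 gives
\[
\Hom_\O\bigl(\theta_\lam^{\rm{out}} P(x\cdot\lam),L(z\cdot 0)\bigr)\cong\Hom_\O\bigl(P(x\cdot\lam),\theta_\lam^{\rm{on}} L(z\cdot 0)\bigr),
\]
and by Part 3 the right-hand side is one-dimensional when $z=xw_{0,\lam}$ and zero otherwise. Since $\theta_\lam^{\rm{out}} P(x\cdot\lam)$ is projective in $\O_0$ by Part 1 and its head is thus the single simple $L(xw_{0,\lam}\cdot 0)$, it must be the projective cover $P(xw_{0,\lam}\cdot 0)$. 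For $\theta_\lam^{\rm{out}} L(x\cdot\lam)$, self-duality follows from Part 1 together with the self-duality of $L(x\cdot\lam)$; the same adjunction computes $\Hom_\O(\theta_\lam^{\rm{out}} L(x\cdot\lam),L(z\cdot 0))\cong\Hom_\O(L(x\cdot\lam),\theta_\lam^{\rm{on}} L(z\cdot 0))$, which is $\C$ when $z=xw_{0,\lam}$ and $0$ otherwise, giving simple head $L(xw_{0,\lam}\cdot 0)$; self-duality then forces the socle to coincide with the head.
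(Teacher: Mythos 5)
The paper offers no proof of this lemma; it is quoted as a classical result with references to Jantzen and to \cite[Chapter 7]{Hum}, and your proposal is essentially a faithful reconstruction of that standard argument (Verma flag of $\Delta(y\cdot 0)\otimes L(\overline\lam)$, the "key lemma" isolating the extremal weight $y\lam$, Grothendieck-group counting for simples, then adjunction plus projectivity for Part 4). The overall structure is correct. One small point to repair: in your justification of compatibility with duality you invoke $L(\overline\lam)^*\cong L(\overline{-\lam})$, but that isomorphism holds for the \emph{ordinary} linear dual and is the input for the adjunction in Part 2; for the duality $\vee$ on $\O$ (restricted dual twisted by the transpose involution) one has instead $L(\overline\lam)^\vee\cong L(\overline\lam)$, and it is this self-duality, together with $(V\otimes M)^\vee\cong V^\vee\otimes M^\vee$ and the block-preservation of $\vee$, that gives $\theta_\lam^{\rm on}(M^\vee)\cong(\theta_\lam^{\rm on}M)^\vee$; using $L(\overline{-\lam})$ there would produce the wrong functor. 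With that substitution, and perhaps a slightly more explicit statement of the multiplicity count showing $L(xw_{0,\lam}\cdot 0)$ is the unique composition factor of the Vermas in the coset $xW_\lam$ surviving translation with multiplicity matching $[\Delta(x\cdot\lam)]$, your sketch is the proof the cited sources give.
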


In \cite[\S8]{Str}, Stroppel introduced certain $\Z$-graded lifts of $\theta_\lam^{\rm{on}}, \theta_\lam^{\rm{out}}$ in the semiregular case, i.e., when $W_\lam=\{1,s\}$ for some $s\in S$. The following lemma is a natural generalization of her result \cite[Theorems 8.1, 8.2]{Str}.

\begin{lemma}\label{lift1} Let $\lam$ be an integral dominant weight. The functor $\theta_\lam^{\rm{on}}$ is $\Z$-gradable with a $\Z$-graded lift: $$\begin{aligned}
\bbtheta_\lam^{\rm{on}}:&\,\, \rgmod A \rightarrow \rgmod A_\lam,\\
& M\mapsto M\otimes_{A}\Hom_{C_\lam}\Bigl(\mathbb{V}_\lam P_{\lam}, \res_{C_\lam}^{C}\mathbb{V}P\Bigr) ,
\end{aligned}
$$
while the functor $\theta_\lam^{\rm{out}}$ is $\Z$-gradable with a $\Z$-graded lift: $$\begin{aligned}
\bbtheta_\lam^{\rm{out}}:&\,\, \rgmod A_\lam \rightarrow \rgmod A,\\
& M\mapsto M\otimes_{A_\lam}\Hom_{C}(\mathbb{V}P,\ind_{C_\lam}^C\mathbb{V}_\lam P_{\lam}\<-\ell(w_{0,\lam})\>) .
\end{aligned}
$$
\end{lemma}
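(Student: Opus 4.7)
The plan is to realise each translation functor as tensoring with a bimodule under the equivalences $\mathcal{F}_0: \O_0 \cong \rmod A$ and $\mathcal{F}_\lam: \O_\lam \cong \rmod A_\lam$, and to equip that bimodule with a canonical $\Z$-grading by transporting it through Soergel's $\mathbb{V}$-functor. Since both translation functors are exact and send projectives to projectives by Lemma \ref{lift0}(1), each of them is, on the module-categorical side, isomorphic to a tensor functor with a Hom-bimodule of the form $\Hom_\O(P_?, \theta_\lam^{?}(P_?))$; the question is only how to put a grading on this bimodule that refines its natural ungraded structure.

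For $\bbtheta_\lam^{\rm on}$, I would first invoke the standard intertwining of Soergel
\[
\mathbb{V}_\lam \circ \theta_\lam^{\rm on} \;\cong\; \res^C_{C_\lam} \circ\, \mathbb{V}
\]
on projective modules, together with the full-faithfulness of $\mathbb{V}_\lam$ on projectives from Definition and Theorem \ref{ff}. Combining these yields a canonical isomorphism of ungraded $(A,A_\lam)$-bimodules
\[
\Hom_\O\bigl(P_\lam,\theta_\lam^{\rm on}(P)\bigr)\;\cong\;\Hom_{C_\lam}\bigl(\mathbb{V}_\lam P_\lam,\res^C_{C_\lam}\mathbb{V} P\bigr).
\]
By Corollary \ref{bs}, both $\mathbb{V}_\lam P_\lam$ and $\mathbb{V} P$ admit canonical $\Z$-gradings (coinciding with the Koszul gradings up to shift), and $\res^C_{C_\lam}$ is manifestly a graded functor. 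Hence the right-hand side acquires the structure of a graded $(A,A_\lam)$-bimodule, which I would take as the definition of $\bbtheta_\lam^{\rm on}$ via tensor product; forgetting the grading recovers $\theta_\lam^{\rm on}$.

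For $\bbtheta_\lam^{\rm out}$ I would proceed identically, replacing the intertwining by
\[
\mathbb{V} \circ \theta_\lam^{\rm out} \;\cong\; \ind_{C_\lam}^C \circ \,\mathbb{V}_\lam,
\]
which follows by combining the adjoint pair $(\theta_\lam^{\rm out},\theta_\lam^{\rm on})$ from Lemma \ref{lift0}(2) with the adjoint pair $(\res_{C_\lam}^C,\ind_{C_\lam}^C\langle -2\ell(w_{0,\lam})\rangle)$ from Proposition \ref{keyprop1} and using the already-established first intertwining. The grading shift $\langle -\ell(w_{0,\lam})\rangle$ in the formula for $\bbtheta_\lam^{\rm out}$ is then pinned down by matching the graded self-dualities of $\mathbb{V}_\lam P_\lam$ and $\mathbb{V} P$ provided by Lemma \ref{selfdualC} (with centres in degrees $\ell(w_0)-\ell(w_{0,\lam})$ and $\ell(w_0)$ respectively) with the $\langle -2\ell(w_{0,\lam})\rangle$ in the second adjunction of Proposition \ref{keyprop1}; the factor of $2$ is absorbed by the fact that the symmetric Cartan form places the graded centres on opposite sides, leaving a net single shift of $\ell(w_{0,\lam})$.

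The principal technical obstacle is the verification of the two Soergel intertwinings $\mathbb{V}_\lam\theta_\lam^{\rm on}\cong \res^C_{C_\lam}\mathbb{V}$ and $\mathbb{V}\theta_\lam^{\rm out}\cong\ind_{C_\lam}^C\mathbb{V}_\lam$ on the generating projectives, and the precise tracking of degree shifts so that the ungraded bimodules recovered after forgetting the grading agree with the original Hom-bimodules representing $\theta_\lam^{\rm on}$ and $\theta_\lam^{\rm out}$. Once this bookkeeping is done, exactness, the commutation with direct sums and the well-definedness of the graded lifts follow formally from the graded Hom-tensor formalism and from the fact that $\mathcal{F}_\lam$ is an equivalence.
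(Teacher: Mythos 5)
Your proposal is correct and follows essentially the same route as the paper, whose proof simply cites Bass's Eilenberg--Watts-type result (to realise each exact, projective-preserving translation functor as tensoring with the bimodule $\mathcal{F}_\lam(\theta_\lam^{\rm on}P)$, resp.\ $\mathcal{F}(\theta_\lam^{\rm out}P_\lam)$) together with Stroppel's Lemma~3.4 (to grade that bimodule via the Soergel intertwining $\mathbb{V}_\lam\theta_\lam^{\rm on}\cong\res^C_{C_\lam}\mathbb{V}$ and full faithfulness of $\mathbb{V}$ on projectives). Your only superfluous step is the effort spent pinning down the shift $\<-\ell(w_{0,\lam})\>$: any overall shift of a graded lift is again a graded lift, so that normalisation is a convention (fixed for later use in Theorems~\ref{adj2} and~\ref{4results}) rather than something the lemma requires you to derive.
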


\begin{proof} This follows from \cite[Lemma 3.4]{Str} and \cite[2.2]{Bas}.
\end{proof}

The following result generalizes \cite[Theorem 8.4]{Str}, where Stroppel dealt with the semiregular case.

\begin{theorem}\label{adj2} Let $\lam$ be an integral dominant weight. Then there are the following two pairs of adjoint graded functors: $$
\bigl(\bbtheta_\lam^{\rm{on}},\bbtheta_\lam^{\rm{out}}\<-\ell(w_{0,\lam})\>\bigr),\quad
\bigl(\bbtheta_\lam^{\rm{out}},\bbtheta_\lam^{\rm{on}}\<\ell(w_{0,\lam})\>\bigr) .
$$
\end{theorem}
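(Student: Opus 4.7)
The plan is to deduce both adjunctions from Proposition \ref{keyprop1}, transporting the two adjoint pairs $\bigl(\ind_{C_\lam}^{C},\res_{C_\lam}^{C}\bigr)$ and $\bigl(\res_{C_\lam}^{C},\ind_{C_\lam}^{C}\langle -2\ell(w_{0,\lam})\rangle\bigr)$ of graded functors between $C\lgmod$ and $C_\lam\lgmod$ (noting that $W_\lam$ plays the role of the parabolic subgroup $W_J$ there, so $w_J=w_{0,\lam}$) back to $\rgmod A$ and $\rgmod A_\lam$ via Soergel's combinatorial functors $\mathbb{V}$ and $\mathbb{V}_\lam$.

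The first key step is to unwind the bimodule formulas of Lemma \ref{lift1} to establish graded natural isomorphisms
$$
\mathbb{V}_\lam\circ\bbtheta_\lam^{\rm{on}}\cong \res_{C_\lam}^{C}\circ\mathbb{V},\qquad
\mathbb{V}\circ\bbtheta_\lam^{\rm{out}}\cong \ind_{C_\lam}^{C}\circ\mathbb{V}_\lam\,\langle -\ell(w_{0,\lam})\rangle .
$$
Evaluated on the right-regular modules $A$ and $A_\lam$ these reduce to absorbing the idempotent cutting out $P(w_0\cdot 0)$ inside $P$ (respectively $P(w_0\cdot\lam)$ inside $P_\lam$), which collapses $\Hom_{C_\lam}(\mathbb{V}_\lam P_\lam,\res\mathbb{V} P)$ to $\res\mathbb{V} P$ and $\Hom_{C}(\mathbb{V} P,\ind\mathbb{V}_\lam P_\lam\langle -\ell(w_{0,\lam})\rangle)$ to $\ind\mathbb{V}_\lam P_\lam\langle -\ell(w_{0,\lam})\rangle$; right-exactness of both sides (they are tensor functors) then propagates the identifications to all of $\rgmod A$ and $\rgmod A_\lam$. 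Since $\bbtheta_\lam^{\rm{on}}$ and $\bbtheta_\lam^{\rm{out}}$ preserve projectives by Lemma \ref{lift0}(1), and $\mathbb{V},\mathbb{V}_\lam$ are graded fully faithful on projectives (the graded enhancement of Definition and Theorem \ref{ff}), for graded projective $M\in\rgmod A$ and $N\in\rgmod A_\lam$ I then chain isomorphisms
\begin{align*}
\ho_{A_\lam}\bigl(\bbtheta_\lam^{\rm{on}}M,N\bigr)
&\cong \ho_{C_\lam}\bigl(\res_{C_\lam}^{C}\mathbb{V} M,\mathbb{V}_\lam N\bigr)\\
&\cong \ho_{C}\bigl(\mathbb{V} M,\ind_{C_\lam}^{C}\mathbb{V}_\lam N\,\langle -2\ell(w_{0,\lam})\rangle\bigr)\\
&\cong \ho_{A}\bigl(M,\bbtheta_\lam^{\rm{out}}N\,\langle -\ell(w_{0,\lam})\rangle\bigr),
\end{align*}
where the middle isomorphism is the second adjoint pair of Proposition \ref{keyprop1}; the parallel chain using the first pair $(\ind,\res)$ yields the second adjunction $\bigl(\bbtheta_\lam^{\rm{out}},\bbtheta_\lam^{\rm{on}}\langle\ell(w_{0,\lam})\rangle\bigr)$.

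Finally, the extension from projectives to arbitrary graded modules is routine: each side of the claimed adjunction isomorphism is contravariant left-exact in $M$ and covariant left-exact in $N$ (using exactness of the $\bbtheta$'s from Lemma \ref{lift0}(1)), so a graded projective presentation of $M$ (respectively $N$) transfers the isomorphism to all of $\rgmod A$ and $\rgmod A_\lam$ via the five-lemma. The main technical obstacle is the grading-shift bookkeeping in Step 1: one must verify that the shift $\langle -\ell(w_{0,\lam})\rangle$ built into the definition of $\bbtheta_\lam^{\rm{out}}$ combines with the shift $\langle -2\ell(w_{0,\lam})\rangle$ coming from the asymmetric adjoint pair of Proposition \ref{keyprop1} exactly so as to produce the asserted shifts $\mp\ell(w_{0,\lam})$ and to render the two pairs genuinely dual to each other; once this interlock is nailed down, everything else reduces to formal manipulation of adjunctions.
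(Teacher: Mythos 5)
Your proposal is correct, and it reaches the adjunctions by a genuinely different route from the paper, even though both arguments ultimately rest on the same key input, namely the two adjoint pairs of Proposition \ref{keyprop1}. The paper stays entirely on the $A$-module side: it applies tensor--hom adjunction to reduce both claims to a single graded bimodule isomorphism $N\otimes_{A_\lam}\check{Q}\cong \Hom_{A_\lam}(Q,N)$, where $Q=\Hom_{C_\lam}(\mathbb{V}_\lam P_\lam,\res_{C_\lam}^C\mathbb{V}P)$ and $\check{Q}=\Hom_{C_\lam}(\res_{C_\lam}^C\mathbb{V}P,\mathbb{V}_\lam P_\lam)$, and then proves that isomorphism by writing down the explicit map $\phi(n\otimes g)(f)=n(g\circ f)$, checking injectivity via the projectivity of $\check{Q}$ and a computation with primitive idempotents, and concluding by a dimension count. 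You instead transport the adjunctions through Soergel's functors, using the intertwining relations $\mathbb{V}_\lam\circ\bbtheta_\lam^{\rm{on}}\cong\res_{C_\lam}^C\circ\mathbb{V}$ and $\mathbb{V}\circ\bbtheta_\lam^{\rm{out}}\cong\ind_{C_\lam}^C\circ\mathbb{V}_\lam\<-\ell(w_{0,\lam})\>$ together with the (graded) full faithfulness of $\mathbb{V}$, $\mathbb{V}_\lam$ on projectives; your shift bookkeeping does close up correctly ($\<-\ell(w_{0,\lam})\>$ from the definition of $\bbtheta_\lam^{\rm{out}}$ plus the external $\<-\ell(w_{0,\lam})\>$ matches the $\<-2\ell(w_{0,\lam})\>$ of Proposition \ref{keyprop1}). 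Your approach is more formal and avoids the explicit injectivity argument, at the cost of two extra verifications the paper does not need: the graded enhancement of the Struktursatz (harmless, since the ungraded isomorphism is induced by a degree-preserving map) and the passage from projectives to all modules. On that last point your phrasing is slightly off: a covariant left-exact functor of $N$ is not determined by its values on a projective presentation of $N$; the clean fix is to observe that both candidate right adjoints $\Hom_{A_\lam}(Q,-)$ and $-\otimes_{A_\lam}\check{Q}\<-\ell(w_{0,\lam})\>$ are \emph{exact} in $N$ (because $Q$ is projective as a right $A_\lam$-module and $\check{Q}$ as a left $A_\lam$-module), so agreement on projective generators propagates by taking cokernels of a projective presentation. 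With that adjustment the argument is complete.
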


\begin{proof} For any $M\in\rgmod A$ and $N\in\rgmod A_\lam$, we have $$\begin{aligned}
\ho_{A_\lam}\bigl(\bbtheta_\lam^{\rm{on}}M,N\bigr)&=\ho_{A_\lam}\Bigl(M\otimes_A \ho_{C_\lam}\bigl(\mathbb{V}_\lam P_{\lam}, \res_{C_\lam}^{C}\mathbb{V}P\bigr),N\Bigr)\\
&\cong\ho_A\Bigl(M,\ho_{A_\lam}\bigl(\ho_{C_\lam}(\mathbb{V}_\lam P_{\lam}, \res_{C_\lam}^{C}\mathbb{V}P),N\bigr)\Bigr),\\
\ho_{A}\bigl(M,\bbtheta_\lam^{\rm{out}}\<-\ell(w_{0,\lam})\>N\bigr)&=\ho_{A}\Bigl(M, N\otimes_{A_\lam} \ho_{C}\bigl(\mathbb{V}P, \ind_{C_\lam}^C\mathbb{V}_\lam P_{\lam}\<-2\ell(w_{0,\lam})\>\bigr)\Bigr)\\
&\cong \ho_{A}\Bigl(M, N\otimes_{A_\lam} \ho_{C_\lam}\bigl(\res_{C_\lam}^C\mathbb{V}P, \mathbb{V}_\lam P_{\lam}\bigr)\Bigr),
\end{aligned}
$$
where the last isomorphism follows from Proposition \ref{keyprop1}.

Similarly, we have $$\begin{aligned}
\ho_{A_\lam}\bigl(\bbtheta_\lam^{\rm{out}}M,N\bigr)&=\ho_{A}\Bigl(M\otimes_{A_\lam} \ho_{C}\bigl(\mathbb{V}P, \ind_{C_\lam}^C\mathbb{V}_\lam P_{\lam}\<-\ell(w_{0,\lam})\>\bigr),N\Bigr)\\
&\cong \ho_{A}\Bigl(M, \ho_{A_\lam}\bigl(\ho_{C_\lam}\bigl(\res_{C_\lam}^C\mathbb{V}P, \mathbb{V}_\lam P_{\lam})\<-\ell(w_{0,\lam}\>,N\bigr)\Bigr),\\
\ho_{A}\bigl(M,\bbtheta_\lam^{\rm{on}}\<\ell(w_{0,\lam})\>N\bigr)&=\ho_{A}\Bigl(M, N\otimes_{A_\lam} \ho_{C_\lam}\bigl(\mathbb{V}_\lam P_{\lam}, \res_{C_\lam}^{C}\mathbb{V}P)\<\ell(w_{0,\lam}\>\bigr)\Bigr).
\end{aligned}
$$

Therefore, it suffices to show that there is a graded right $A$-module isomorphism:\begin{equation}\label{eqa1}
N\otimes_{A_\lam}\check{Q}\cong \Hom_{A_\lam}(Q,N),\,\, N\otimes_{A_\lam}{Q}\cong \Hom_{A_\lam}(\check{Q},N),
\end{equation}
where $Q:=\Hom_{C_\lam}(\mathbb{V}_\lam P_{\lam}, \res_{C_\lam}^{C}\mathbb{V}P)$, $\check{Q}:=\Hom_{C_\lam}\bigl(\res_{C_\lam}^C\mathbb{V}P, \mathbb{V}_\lam P_{\lam}\bigr)$, and the right $A$-module structure on $\Hom_{A_\lam}(Q,N)$ is induced from the left $A$-module structure on $Q$.

Applying \cite[Theorem 10]{So}, we have $\res_{C_\lam}^{C}\mathbb{V}P\cong\mathbb{V}_\lam\theta_\lam^{\rm{on}}P$. Therefore, as both $P_\lam, \theta_\lam^{\rm{on}}P$ are projective, $$
Q\cong \Hom_{C_\lam}(\mathbb{V}_\lam P_{\lam}, \mathbb{V}_\lam\theta_\lam^{\rm{on}}P)\cong\Hom_{\O_\lam}(P_\lam,\theta_\lam^{\rm{on}}P)\cong \mathcal{F}(\theta_\lam^{\rm{on}}P) .
$$
It follows that $N\otimes_{A_\lam}\check{Q}\cong \Hom_{A_\lam}(Q,N)$ as a $\C$-linear space. In particular, both $N\otimes_{A_\lam}\check{Q}$ and $\Hom_{A_\lam}(Q,N)$ have the same dimensions.

We construct a map $\phi: N\otimes_{A_\lam}\check{Q}\rightarrow\Hom_{A_\lam}(Q,N)$ as follows: for any $n\in N, g\in\check{Q}$, we define $$
\phi(n\otimes g): f\mapsto n(g\circ f),\quad \forall\, f\in Q,
$$
and extends its $\C$-linearly to a $\C$-linear map. Note that $g\circ f\in\End_{C_\lam}\bigl(\mathbb{V}_\lam P_\lam\bigr)\cong\End_{\O_\lam}(P_\lam)\cong A_\lam$, so $n(g\circ f)$ is a well-defined element in $N$.
It is easy to check that $\phi$ is a well-defined graded right $A$-module homomorphism. It remains to show that $\phi$ is injective.

As a left $A_\lam$-module, $$\begin{aligned}
\check{Q}&=\Hom_{C_\lam}\bigl(\res_{C_\lam}^C\mathbb{V}P, \mathbb{V}_\lam P_{\lam}\bigr)\cong\Hom_{C_\lam}\bigl(\mathbb{V}\theta_\lam^{\rm{on}}P, \mathbb{V}_\lam P_{\lam}\bigr)\\
&\cong\Hom_{\O_\lam}(\theta_\lam^{\rm{on}}P,P_\lam)\cong \mathcal{F}(\theta_\lam^{\rm{on}}P)^\tau,
\end{aligned}
$$
where $\mathcal{F}(\theta_\lam^{\rm{on}}P)^\tau$ means the left $A_\lam$-module obtained from twisting the right $A_\lam$-module $\mathcal{F}(\theta_\lam^{\rm{on}}P)$ via $\tau$.
In particular, it is projective. We have an isomorphism $\iota: \check{Q}\cong\oplus_{i=1}^b A_\lam e_i$, where for each $i$, $e_i$ is a primitive idempotent in $A_\lam$ which corresponds to a projection from $P_\lam$ onto $P(x_i\cdot\lam)$ for some $x_i\in W^\lam$.

Therefore, in order to show $\phi$ is injective, it suffices to show that for any $n\in N, g_i\in\check{Q}$ such that $g_i=\tau^{-1}(e_i)$, $n\otimes g_i\neq 0$, we have $\phi(n\otimes e_i)\neq 0$. Note that $n\otimes g_i\neq 0$ implies that $ne_i\neq 0$. By definition of $\tau$, $g_i$ corresponds to the projection from a direct summand $N_i\cong P(x_i\cdot\lam)$ of $\theta_\lam^{\rm{on}}P$ onto the direct summand $P(x_i\cdot\lam)$ of $P_\lam$.
Take $f_i\in Q$ to be the projection from $P_\lam$ onto the direct summand $N_i$ of $\theta_\lam^{\rm{on}}P$. Then it is clear that $g_i\circ f_i=e_i$. Thus $$
\phi(n\otimes g_i)(f_i)=n(g_i\circ f_i)=ne_i\neq 0,
$$
as required. This prove that $\phi$ is injective and hence the right $A$-module isomorphism $N\otimes_{A_\lam}\check{Q}\cong \Hom_{A_\lam}(Q,N)$ because both sides have the same dimensions. In a similar way, one can also prove that there is right $A$-module isomorphism $N\otimes_{A_\lam}{Q}\cong \Hom_{A_\lam}(\check{Q},N)$. This completes the proof of the theorem.
\end{proof}

\begin{remark} 1) Note that the proof of the above theorem is different with the proof of \cite[Theorem 8.4]{Str}. Our proof essentially boils down to Proposition \ref{keyprop1} which does not appear elsewhere (to the best of our knowledge). After a first version of this paper was completed and put on the arXiv, Stroppel informed us that one should replace the duality functor $\circledast$ used in her proof of \cite[Theorem 8.4]{Str} with the linear duality functor $\Hom_{\C}(-,\C)$ (which turns left modules into  right modules), and then it will not affect her other argument;

2) The second adjoint pair appeared (without proof) in \cite[(8)]{CM} with a reference to \cite[Theorem 38]{MOS}. However, one can check that the proof of \cite[Theorem 38]{MOS} relies on \cite[Theorem 8.4]{Str} which deals with only the semiregular case.
\end{remark}

The next theorem is also a generalization of \cite[Theorem 8.1]{Str} from semiregular case to the general case.

\begin{theorem}\label{4results} Let $\lam$ be an integral dominant weight and $x\in W^\lam$. We have $$\begin{aligned}
&\bbtheta_\lam^{\rm{on}}(\mD(x\cdot 0))\cong \mD(x\cdot\lam),\quad \bbtheta_\lam^{\rm{on}}(\mD(xw_{0,\lam}\cdot 0))\cong \mD(x\cdot\lam)\<-\ell(w_{0,\lam})\> ,\\
&\bbtheta_\lam^{\rm{on}}(\mL(xw_{0,\lam}\cdot 0))\cong\mL(x\cdot\lam)\<-\ell(w_{0,\lam})\>,\, \bbtheta_\lam^{\rm{out}}(\mP(x\cdot\lam))\cong\mP(xw_{0,\lam}\cdot 0).
\end{aligned}
$$
\end{theorem}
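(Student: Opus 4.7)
The plan is to first establish the ungraded isomorphisms via Lemma \ref{lift0}, so that in each of the four claims the left-hand side is a priori a graded lift of the indecomposable module on the right. By \cite[Theorem 3.3]{GG}, any two graded lifts of an indecomposable module differ by a single integer grading shift, so each assertion reduces to pinning down one integer. The four shifts $0$, $-\ell(w_{0,\lam})$, $-\ell(w_{0,\lam})$ and $0$ are all controlled by the single built-in shift $\<-\ell(w_{0,\lam})\>$ appearing in the definition of $\bbtheta_\lam^{\rm{out}}$ in Lemma \ref{lift1}, together with the graded adjunctions of Theorem \ref{adj2}.

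I would carry out the four claims in the order (4), (1), (2), (3). For (4), write $\mP(x\cdot\lam)=e_xA_\lam$ with $e_x$ a degree zero primitive idempotent (Corollary \ref{idem0}), so that the tensor product defining $\bbtheta_\lam^{\rm{out}}$ simplifies to
\[\bbtheta_\lam^{\rm{out}}\bigl(\mP(x\cdot\lam)\bigr)\;=\;e_x\cdot\Hom_C\bigl(\mathbb{V}P,\,\ind_{C_\lam}^C\mathbb{V}_\lam P_\lam\<-\ell(w_{0,\lam})\>\bigr).\]
A graded refinement of Soergel's identification $\ind_{C_\lam}^C\mathbb{V}_\lam(-)\cong\mathbb{V}\theta_\lam^{\rm{out}}(-)$, calibrated so that the built-in shift absorbs the degree on the Soergel side, identifies this with $e_x\mathcal{F}(\theta_\lam^{\rm{out}}P_\lam)=\mathcal{F}(P(xw_{0,\lam}\cdot 0))=\mP(xw_{0,\lam}\cdot 0)$, yielding shift zero. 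For (1), apply the exact functor $\bbtheta_\lam^{\rm{on}}$ to the standard surjection $\mP(x\cdot 0)\twoheadrightarrow\mD(x\cdot 0)$ and use the adjunction $\bigl(\bbtheta_\lam^{\rm{on}},\bbtheta_\lam^{\rm{out}}\<-\ell(w_{0,\lam})\>\bigr)$ of Theorem \ref{adj2} together with (4) to compute graded Hom spaces against $\mL(y\cdot\lam)\<k\>$; this locates the graded head of $\bbtheta_\lam^{\rm{on}}\mD(x\cdot 0)$ as $\mL(x\cdot\lam)$ in degree zero and hence forces the shift zero. For (2), the identical argument applied to $\mD(xw_{0,\lam}\cdot 0)$, combined with the grading bookkeeping furnished by Theorem \ref{2cases} (the factor $v^{\ell(x)-\ell(xw_{0,\lam})}=v^{-\ell(w_{0,\lam})}$ tracks the new head position), yields the claimed $\<-\ell(w_{0,\lam})\>$. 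Finally, (3) follows by applying $\bbtheta_\lam^{\rm{on}}$ to $\mD(xw_{0,\lam}\cdot 0)\twoheadrightarrow\mL(xw_{0,\lam}\cdot 0)$: the target is a graded quotient of $\mD(x\cdot\lam)\<-\ell(w_{0,\lam})\>$ and, being a graded lift of $L(x\cdot\lam)$ by Lemma \ref{lift0}(3), must equal the simple head $\mL(x\cdot\lam)\<-\ell(w_{0,\lam})\>$.

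The main obstacle is establishing the graded refinement of Soergel's commutation formulas $\ind_{C_\lam}^C\mathbb{V}_\lam(-)\cong\mathbb{V}\theta_\lam^{\rm{out}}(-)$ and $\res_{C_\lam}^C\mathbb{V}(-)\cong\mathbb{V}_\lam\theta_\lam^{\rm{on}}(-)$ with the correct homogeneous degree tracking. This is precisely where Proposition \ref{keyprop1} is indispensable: the second adjoint pair there shows that $\res_{C_\lam}^C$ admits $\ind_{C_\lam}^C\<-2\ell(w_{0,\lam})\>$ as a graded right adjoint, and the shift $\<-\ell(w_{0,\lam})\>$ built into $\bbtheta_\lam^{\rm{out}}$ is the unique choice compatible with this graded Frobenius structure, symmetrizing the two adjoint pairs of Theorem \ref{adj2}. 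Once this graded compatibility, and the consequent normalization of $\bbtheta_\lam^{\rm{on}}$ on the standard Verma $\mD(x\cdot 0)$, are in place, the four assertions reduce to the direct computations sketched above.
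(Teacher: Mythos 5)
Your overall reduction is the same as the paper's: by Lemmas \ref{lift0} and \ref{lift1} the ungraded statements hold, each module involved is indecomposable, so by \cite[Theorem 3.3]{GG} only four integer shifts need to be determined, and the graded adjunctions of Theorem \ref{adj2} let one shift propagate to the others. The difference is where you anchor the computation, and that is where your argument has a genuine gap. You start from (4), but your justification there is circular: the phrase ``a graded refinement of Soergel's identification $\ind_{C_\lam}^C\mathbb{V}_\lam(-)\cong\mathbb{V}\theta_\lam^{\rm{out}}(-)$, calibrated so that the built-in shift absorbs the degree on the Soergel side'' simply asserts the conclusion. What has to be proved is precisely that $\ind_{C_\lam}^C\mathbb{V}_\lam P(x\cdot\lam)\<-\ell(w_{0,\lam})\>$ agrees, with \emph{zero} further shift, with the Koszul-graded $\mathbb{V}P(xw_{0,\lam}\cdot 0)$; pinning down that integer requires a real input (e.g.\ the self-duality statement of Lemma \ref{selfdualC} together with the graded Frobenius structure of Proposition \ref{keyprop1}, or a degree computation as in the paper), and the bookkeeping is delicate enough that it cannot be waved through. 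Since your (1)--(3) are all derived from (4), the whole chain rests on this unproved calibration.

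There is a second, independent problem with your route to (1). The adjunction $\bigl(\bbtheta_\lam^{\rm{on}},\bbtheta_\lam^{\rm{out}}\<-\ell(w_{0,\lam})\>\bigr)$ converts $\ho_{A_\lam}\bigl(\bbtheta_\lam^{\rm{on}}\mD(x\cdot 0),\mL(y\cdot\lam)\<k\>\bigr)$ into a Hom space into $\bbtheta_\lam^{\rm{out}}\mL(y\cdot\lam)\<k-\ell(w_{0,\lam})\>$, and evaluating this requires knowing in which degree the relevant composition factor of the graded module $\bbtheta_\lam^{\rm{out}}\mL(y\cdot\lam)$ sits --- information not available at that stage. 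Nor can (1) be rescued by a head argument from (3): for singular $\lam$ and $x\in W^\lam$ one has $\theta_\lam^{\rm{on}}L(x\cdot 0)=0$ by Lemma \ref{lift0}(3), so $\bbtheta_\lam^{\rm{on}}$ kills the head of $\mD(x\cdot 0)$ and gives no handle on the head of the image. The paper avoids both difficulties by anchoring at the Verma modules directly: $\mathbb{V}_\lam\bbtheta_\lam^{\rm{on}}\cong\res_{C_\lam}^{C}\mathbb{V}$ with no shift, $\mathbb{V}$ of a graded Verma module is one-dimensional, and Theorem \ref{2cases} together with Corollary \ref{gradedlength1} identifies the single degree in which $\mathbb{V}\mD(x\cdot 0)$, $\mathbb{V}\mD(xw_{0,\lam}\cdot 0)$ and $\mathbb{V}_\lam\mD(x\cdot\lam)$ are concentrated; this forces the shifts in (1) and (2), after which (3) follows from heads and (4) from adjunction. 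You would need to incorporate an anchor of this concrete kind (or an honest proof of the graded $\ind$-commutation) for your plan to close.
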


\begin{proof} Forgetting the grading, the statement of the theorem all hold by Lemmas \ref{lift0}, \ref{lift1}. It remains to determine the shifts.

Using Theorem \ref{2cases} and the fact that $P_{y,w_0}=1$ for any $y\in W$, we know that for any $y\in W$, $\mathbb{V}(\Delta(y\cdot\lam)$ is one-dimensional and hence concentrates in one degree.
Applying Corollary \ref{gradedlength1} we see that both $\mathbb{V}\mD(x\cdot 0)$ and $\mathbb{V}\mD(x\cdot\lam)$ concentrate in degree $\ell(w_0)-\ell(x)$. Hence
$\mathbb{V}\bbtheta_\lam^{\rm{on}}(\mD(x\cdot 0))\cong \mathbb{V}\mD(x\cdot\lam)$, and hence $\bbtheta_\lam^{\rm{on}}(\mD(x\cdot 0))\cong \mD(x\cdot\lam)$.
By similar argument, we can deduce that $\bbtheta_\lam^{\rm{on}}(\mD(xw_{0,\lam}\cdot 0))\cong \mD(x\cdot\lam)\<-\ell(w_{0,\lam})\> $.

As a result, we can also deduce the third isomorphism because $\mL(xw_{0,\lam}\cdot 0)$ (resp., $\mL(x\cdot\lam)$) is the unique simple head of $\mD(xw_{0,\lam}\cdot 0)$ (resp., $\mD(x\cdot\lam)$) and the statement holds upon forgetting the gradings.

Finally, all the remaining isomorphism follow from the first one, Theorem \ref{lift0} as well as Theorem \ref{adj2}.
\end{proof}

\begin{corollary} Let $\lam$ be an integral dominant weight. Then $\bbtheta_\lam^{\rm{out}}\bbtheta_\lam^{\rm{on}}\cong \bbtheta_{w_{0,\lam}}$.
\end{corollary}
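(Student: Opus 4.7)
The plan is to first identify the underlying ungraded composition with the indecomposable projective functor $\theta_{w_{0,\lam}}$, and then promote this identification to a graded isomorphism by pinning down the grading shift on a single test module.

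First I would use the classification of indecomposable projective functors from \cite{BG}: since $0$ is regular, every indecomposable projective endofunctor of $\O_0$ is of the form $\theta_w$ for some $w\in W$, and is uniquely determined (up to isomorphism) by the value $\theta_w(\Delta(0))=P(w\cdot 0)$. The composition $\theta_\lam^{\rm{out}}\theta_\lam^{\rm{on}}$ is a projective endofunctor of $\O_0$, and using Lemma \ref{lift0}(3)(4) together with the fact that $\Delta(\lam)=L(\lam)=P(\lam)$ (since $\lam$ is dominant, so this Verma is simple and projective), one computes
$$
\theta_\lam^{\rm{out}}\theta_\lam^{\rm{on}}(\Delta(0))
=\theta_\lam^{\rm{out}}(\Delta(\lam))
=\theta_\lam^{\rm{out}}(P(\lam))
\cong P(w_{0,\lam}\cdot 0).
$$
Since $P(w_{0,\lam}\cdot 0)$ is indecomposable, the classification forces $\theta_\lam^{\rm{out}}\theta_\lam^{\rm{on}}\cong\theta_{w_{0,\lam}}$ as ungraded functors.

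Next I would invoke uniqueness of graded lifts. By Lemma \ref{lift1}, $\bbtheta_\lam^{\rm{out}}\bbtheta_\lam^{\rm{on}}$ is a $\Z$-graded endofunctor of $\rgmod A$ whose underlying ungraded functor is $\theta_{w_{0,\lam}}$; so is $\bbtheta_{w_{0,\lam}}$ by definition. Since $\theta_{w_{0,\lam}}$ is indecomposable, any two of its graded lifts differ by an overall grading shift (the functorial analogue of \cite[Theorem 3.3]{GG} applied to the bimodules representing the lifts in Lemma \ref{lift1}; compare \cite[\S3.1--3.2]{Str}). Hence $\bbtheta_\lam^{\rm{out}}\bbtheta_\lam^{\rm{on}}\cong\bbtheta_{w_{0,\lam}}\<k\>$ for some $k\in\Z$, and it only remains to show $k=0$.

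To determine $k$, I would evaluate on $\mD(0)=\mD(e\cdot 0)$. By Theorem \ref{4results}, $\bbtheta_\lam^{\rm{on}}(\mD(0))\cong\mD(\lam)$ with no shift; since $\Delta(\lam)=L(\lam)=P(\lam)$, we have $\mD(\lam)=\mL(\lam)=\mP(\lam)$ (all concentrated in degree $0$, with no ambiguity in the graded lift). Applying Theorem \ref{4results} a second time, $\bbtheta_\lam^{\rm{out}}(\mP(\lam))\cong\mP(w_{0,\lam}\cdot 0)$ with no shift, so $\bbtheta_\lam^{\rm{out}}\bbtheta_\lam^{\rm{on}}(\mD(0))\cong\mP(w_{0,\lam}\cdot 0)$. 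By the defining property, $\bbtheta_{w_{0,\lam}}(\mD(0))=\mP(w_{0,\lam}\cdot 0)$, so $k=0$, yielding $\bbtheta_\lam^{\rm{out}}\bbtheta_\lam^{\rm{on}}\cong\bbtheta_{w_{0,\lam}}$.

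The main obstacle is the uniqueness of graded lifts of an indecomposable gradable endofunctor up to a global shift. I expect to dispose of this by translating graded lifts of the functors in Lemma \ref{lift1} into graded lifts of the corresponding $(A,A)$-bimodules, and then invoking the module-level result \cite[Theorem 3.3]{GG} already used for Corollary \ref{bs}; the indecomposability of the bimodule is equivalent to that of the associated functor, so no new input beyond Step 1 is required.
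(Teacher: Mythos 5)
Your argument is essentially the paper's own proof: identify the ungraded composite with $\theta_{w_{0,\lam}}$ via the classification of indecomposable projective functors applied to $\Delta(0)$, then pin down the shift by evaluating the graded lifts on $\mD(0)$ using Theorem \ref{4results}; you merely make explicit the uniqueness-up-to-shift of graded lifts that the paper leaves implicit. One small correction: a dominant Verma module $\Delta(\lam)$ is projective but not simple unless $\lam$ is also anti-dominant, so your claim $\Delta(\lam)=L(\lam)$ (and hence that $\mD(\lam)$ is concentrated in degree $0$) is false in general; all you actually need is $\mD(\lam)=\mP(\lam)$ as graded modules, which holds because both lifts are normalized to have simple head $\mL(\lam)$ in degree $0$, so the argument is unaffected.
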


\begin{proof} By definition, $\theta_\lam^{\rm{out}}\theta_\lam^{\rm{on}}$ is a projective functor. Since $\lam$ is dominant, $\Delta(\lam)=P(\lam)$. We know that $$
\theta_\lam^{\rm{out}}\theta_\lam^{\rm{on}}\Delta(0)\cong\theta_\lam^{\rm{out}}\Delta(\lam)
=\theta_\lam^{\rm{out}}P(\lam)\cong P(w_{0,\lam}\cdot 0)\cong\theta_{w_{0,\lam}}P(0).
$$
Hence $\theta_\lam^{\rm{out}}\theta_\lam^{\rm{on}}\cong \theta_{w_{0,\lam}}$ by the classification of indecomposable projective functors. It remains to determine the shifts. But this follows from Theorem \ref{4results} by considering their action on $\mD(0)$.
\end{proof}


\begin{thebibliography}{9}
\setlength{\itemsep}{-4pt}
\begin{small}

\bibitem{Bac} E.~Backelin, {\em Koszul duality for parabolic and singular category $\O$}, Represent. Theory, {\bf 3} (1999), 139--152.

\bibitem{Bas} H.~Bass, {\em Algebraic $K$-Theory}, W.A. Benjamin Inc., New York, 1968.

\bibitem{BB0} A.~Beilinson and J.~Bernstein, {\em Localisation de $\fg$-modules}, C.R. Math. Acad. Sci. Paris, {\bf 292} (1981), 15--18.


\bibitem{BjB} A.~Bjorner and F.~Brenti, {\em Combinatorics of Coxeter Groups}, GTM {\bf 231}, Springer.

\bibitem{BG} I.~N. Bernstein and S.~I. Gelfand, {\em Tensor products of finite and infinite dimensional representations
of semisimple Lie algebras}, Compositio Math., {\bf 41} (1980), 245--285.

\bibitem{BGG0} I.~N. Bernstein, I.~M. Gelfand and S.~I. Gelfand, {\em Schubert cells and cohomology of the space $G/P$},
Uspekhi Mat. Nauk {\bf 38}, No.3, (1973), 3--26.


\bibitem{BGG} \leavevmode\vrule height 2pt depth -1.6pt width 23pt,  {\em A certain category of $\fg$-modules},
Funkcional. Anal. i Prilo\v{z}en., {\bf 10} (1976), 1--8.





\bibitem{BGS} A.~Beilinson, V.~Ginzburg and W.~Soergel, {\em Koszul duality patterns in representation theory}, J. Amer. Math. Soc., {\bf 9} (1996), 473--527.

\bibitem{BK} J.L.~Brylinski and M.~Kashiwara, {\em Kazhdan-Lusztig conjecture and holonomic systems}, Invent. Math., {\bf 64} (1981), 387--410.

\bibitem{EMTW} B.~Elias, S.~Makisumi, U.~Thiel and G.~Williamson, {\em Introduction to Soergel bimodules}, RSME Springer Series {\bf 5}, Springer, 2020.

\bibitem{CPS} E.~Cline, B.~Parshall and L.~Scott, {\em Abstract Kazhdan-Lusztig theory}, T\^{o}hoku Math. J., {\bf 45} (1993), 511--534.

\bibitem{CM} K.~Coulembier and V.~Mazorchuk, {\em Dualities and derived equivalences for category $\O$}, Israel Journal of Mathematics, {\bf 219} (2017), 661--706.

\bibitem{CM4}  \leavevmode\vrule height 2pt depth -1.6pt width 23pt, {\em Some homological properties of category $\O$, IV}, Forum Math., {\bf 29}(5) (2017), 1083--1124.

\bibitem{GG} R.~Gordon, E.L.~Green, {\em Graded Artin algebras}, J. Algebra, {\bf 76} (1982), 111--137.

\bibitem{Gr}
J.A.~Green, {\em Polynomial representations of $GL_n$}, Lect. Notes Math., {\bf 830}, Springer-Verlag, Berlin, 1980.

\bibitem{Hi}
H.~Hiller, {\em Geometry of Coxeter groups}, Res. Notes Math., vol {\bf 54}, Pitman (Advanced Publishing Program), Boston, MA, 1982.

\bibitem{HuMathas:GradedCellular}
J.~Hu and A.~Mathas, {\em Graded cellular bases for the cyclotomic {K}hovanov-{L}auda-{R}ouquier algebras of type {$A$}}, Adv. Math., {\bf 225} (2010), 598--642.

\bibitem{Hum0} J.~Humphreys, {\em Reflection Groups and Coxeter Groups}, Cambridge Studies in Advanced
Mathematics, 29), Cambridge Univ. Press, Cambridge, 1990.

\bibitem{Hum}  \leavevmode\vrule height 2pt depth -1.6pt width 23pt, {\em Representations of semisimple Lie algebras in the BGG category $\mathcal{O}$}, GSM. {\bf 94}, Amer. Math. Soc. Providence, 2008.

\bibitem{HuXiao} J.~Hu and W.~Xiao, {\em Jantzen coefficients and radical filtrations for generalized Verma modules}, arXiv:2004:08758, preprint, (2020).

\bibitem{HuX} J.~Hu and Z.~Xiao, {\em Tilting modules, dominant dimensions and Brauer-Schur-Weyl duality}, Trans. Amer. Math. Soc. (Series B), {\bf 8} (2021), 823--848.


\bibitem{J2} J.~C. Jantzen, {\em Moduln mit einem h\"{o}chsten Gewicht}, Lecture Notes in Mathematics, {\bf 750}, Springer, Berlin, 1979.

\bibitem{J3}  \leavevmode\vrule height 2pt depth -1.6pt width 23pt, {\em Einh\"{u}llende Algebren halbeinfacher Lie-Algebren}, Ergebnisse der Mathematik und ihrer Grenzgebiete (3) (Results in Mathematics and Related Areas (3)), vol. {\bf 3}, Springer, Berlin, 1983.

\bibitem{KL} D.~Kazhdan and G.~Lusztig, {\em Representations of Coxeter groups and Hecke algebras}, Invent. Math., {\bf 53} (1979), 165--184.

\bibitem{KSX} {\sc S.~K\"onig, C.C.~Xi and I.H.~Slung{\aa}rd}, {\em Double centralizer properties, dominant dimension,
and tilting modules}, J. Alg., \textbf{240} (2001), 393--412.

\bibitem{M1} V.~Mazorchuk, {\em Some homological properties of the category $\O$}, Pacific J. Math., {\bf 232} (2007), 313--341.

\bibitem{M2} \leavevmode\vrule height 2pt depth -1.6pt width 23pt, {\em Applications of the category of linear complexes of tilting modules associated with the category $\O$}, Algebr.
Represent. Theory, {\bf 12} (2009), 489--512.

\bibitem{M3} \leavevmode\vrule height 2pt depth -1.6pt width 23pt, {\em Some homological properties of the category $\O$, II}, Represent. Theory, {\bf 14} (2010), 249--263.

\bibitem{M4} \leavevmode\vrule height 2pt depth -1.6pt width 23pt, {\em Lectures on algebraic categorification}, The QGM Master Class Series, EMS, 2012.


\bibitem{MOS} V.~Mazorchuk, S.~Ovsienko and C.~Stroppel, {\em Quadratic duals, Koszul dual functors, and applications}, Trans. Amer. Math.
Soc., {\bf 361} (2009), 1129--1172.


\bibitem{R} A. Rocha-Caridi, Splitting criteria for $\fg$-modules induced from a parabolic and the Bernstein-Gelfand-Gelfand resolution of a finite-dimensional, irreducible $\fg$-module, Trans. Amer. Math. Soc., \textbf{262} (1980), 335--366.

\bibitem{So} W.~Soergel, {\em Kategorie $\O$, perverse Garben und Moduln \"{u}ber den Koinvariantez zur Weylgruppe}, J. Amer. Math. Soc.,
{\bf 3} (1990), 421--445.


\bibitem{Str} C.~Stroppel, {\em Category $\O$: gradings and translation functors}, J. Algebra, {\bf 268} (2003), 301--326.


\end{small}
\end{thebibliography}
\end{document}